\documentclass[a4paper,final]{article}
\usepackage[british]{babel}
\pdfminorversion=7
\pdfobjcompresslevel=5


\usepackage{amsmath, amssymb, amsthm, amsfonts}
\usepackage{ifdraft}
\usepackage{microtype}
\usepackage[svgnames]{xcolor}
\usepackage{url}
\usepackage[utf8]{inputenc}
\usepackage{aliascnt}
\usepackage[colorlinks=true,linkcolor=blue,draft=false]{hyperref}
\usepackage{tikz}
\usepackage{mathtools}
\usepackage{enumerate}
\usepackage{extarrows}
\usepackage{enumitem}


\usepackage[notref,notcite,color]{showkeys}

\usepackage{algorithmic}

\ifdraft{
\newcommand{\colorS}[1]{{\color{blue} #1}}
\newcommand{\colorV}[1]{{\color{red} #1}}
\newcommand{\itemS}[1]{\colorS{\item[S:] #1}}
\newcommand{\itemV}[1]{\colorV{\item[V:] #1}}
\newcommand{\commS}[1]{{\marginpar{\tiny \colorS{#1}}}}
\newcommand{\commV}[1]{{\marginpar{\tiny \colorV{#1}}}}
}
{
\newcommand{\colorS}[1]{}
\newcommand{\colorV}[1]{}
\newcommand{\itemS}[1]{}
\newcommand{\itemV}[1]{}
\newcommand{\commS}[1]{}
\newcommand{\commV}[1]{}
}

\makeatletter
\def\myfooter{\xdef\@thefnmark{}\@footnotetext}
\makeatother

\title{Zappa--Sz\'ep products of Garside monoids}
\author{Volker Gebhardt, Stephen Tawn}
\date{24th October 2015}

\hypersetup{pdftitle={Zappa-Sz\'ep products of Garside monoids},
            pdfauthor={Volker Gebhardt, Stephen Tawn},
            pdfkeywords={},
            pdfsubject={},
            pdfdisplaydoctitle={true}}


\newcommand{\NN}{\mathbb{N}}

\newcommand{\from}{\colon\!}

\newcommand{\Union}{\bigcup}
\newcommand{\intersect}{\cap}

\def\setminus{\mathbin{\raisebox{0.25ex}{$\smallsetminus$}}}
\newcommand{\join}{\vee}
\renewcommand{\Join}{\bigvee}
\newcommand{\meet}{\wedge}
\newcommand{\Meet}{\bigwedge}

\newcommand{\leftJoin}{\Join}
\newcommand{\rightjoin}{\mathbin{\widetilde{\join}}}
\newcommand{\rightJoin}{\mathop{\widetilde{\Join}}}

\newcommand{\rightmeet}{\mathbin{\widetilde{\meet}}}

\let\epsilon\varepsilon
\newcommand{\prefix}{\preccurlyeq}
\newcommand{\suffix}{\succcurlyeq}
\newcommand{\factor}{\dot\curlyvee}
\newcommand{\under}{\backslash}
\newcommand{\ldot}{\mathbin{.}}

\newcommand{\rightacts}{\vartriangleleft}
\newcommand{\leftacts}{\vartriangleright}
\newcommand{\Rightacts}{\blacktriangleleft}
\newcommand{\Leftacts}{\blacktriangleright}

\DeclareMathOperator{\Div}{Div}

\newcommand{\Atoms}{\mathcal{A}}
\newcommand{\Simples}{\mathcal{D}}
\newcommand{\pSimples}{\Simples^{\!{}^{\circ}\!}}
\newcommand{\cl}{\mathrm{cl}}
\renewcommand{\complement}{\partial}
\newcommand{\rightcomplement}{\widetilde{\complement}}
\newcommand{\id}{\mathbf{1}}
\newcommand{\op}{\mathrm{op}}
\newcommand{\QZ}{\mathrm{QZ}}

\newcommand{\NF}{\mathrm{NF}}
\newcommand{\ZS}{Zappa--Sz{\'e}p}

\let\zs=\bowtie

\newcommand{\calG}{\mathcal{G}}
\newcommand{\calH}{\mathcal{H}}
\newcommand{\calL}{\mathcal{L}}

\newcommand{\calLbar}{\overline{\mathcal{L}}}


\theoremstyle{plain}
\newtheorem{theorem}{Theorem}

\newaliascnt{lemma}{theorem}
\newtheorem{lemma}[lemma]{Lemma}
\aliascntresetthe{lemma}

\newaliascnt{proposition}{theorem}
\newtheorem{proposition}[proposition]{Proposition}
\aliascntresetthe{proposition}

\newaliascnt{corollary}{theorem}
\newtheorem{corollary}[corollary]{Corollary}
\aliascntresetthe{corollary}

\theoremstyle{remark}

\newtheorem*{claim*}{Claim}
\newtheorem*{remark}{Remark}

\theoremstyle{definition}

\newaliascnt{definition}{theorem}
\newtheorem{definition}[definition]{Definition}
\aliascntresetthe{definition}

\newaliascnt{example}{theorem}
\newtheorem{example}[example]{Example}
\aliascntresetthe{example}

\newaliascnt{notation}{theorem}
\newtheorem{notation}[notation]{Notation}
\aliascntresetthe{notation}

\addto\extrasbritish{

}



\makeatletter
\def\MR@url=#1 #2={http://www.ams.org/mathscinet-getitem?mr=#1}

\makeatother



\begin{document}

\maketitle
\myfooter{Both authors acknowledge support under Australian Research Council's Discovery Projects funding scheme (project number DP1094072). Volker Gebhardt acknowledges support under the Spanish Projects MTM2010-19355 and MTM2013-44233-P.}
\myfooter{MSC-class: 20F36 (Primary) 20M13, 06F05 (Secondary)}

\begin{abstract}
A monoid~$K$ is the internal Zappa--Sz{\'e}p product of two submonoids, if every element of~$K$ admits a unique factorisation as the product of one element of each of the submonoids in a given order.  This definition yields actions of the submonoids on each other, which we show to be structure preserving.

We prove that~$K$ is a Garside monoid if and only if both of the submonoids are Garside monoids.  In this case, these factors are parabolic submonoids of~$K$ and the Garside structure of~$K$ can be described in terms of the Garside structures of the factors.
We give explicit isomorphisms between the lattice structures of~$K$ and the product of the lattice structures on the factors that respect the Garside normal forms.  In particular, we obtain explicit natural bijections between the normal form language of~$K$ and the product of the normal form languages of its factors.
\end{abstract}

\section{Introduction}\label{S:Introduction}

The notion of \ZS{} products generalises those of direct and semidirect products; the key property is that every element of the \ZS{} product can be written uniquely as a product of two elements, one from each factor, in any given order.

For instance, a group $K$ is the (internal) \emph{\ZS{} product} of two subgroups~$G$ and~$H$, written
$K=G\zs H$, if for every $k\in K$ there exist unique elements $g\in G$ and $h\in H$ such that $g h = k$, or equivalently, if $K=GH$ and $G\cap H=\{\id\}$.  As taking inverses is an anti-isomorphism, one also has $K=HG$ and one obtains unique elements $g'\in G$ and $h'\in H$ such that $h' g' = k$.  However, in general neither $g=g'$ nor $h=h'$ need to hold.
The special case that one of the factors, say $G$, is a normal subgroup yields a semidirect product $G\rtimes H$; if both factors are normal, one obtains the direct product $G\times H$.

Note that if we consider the case of monoids, the symmetry under swapping the factors is not automatic, that is, $K=GH$ need not imply $K=HG$.
\medskip

\noindent
\ZS{} products have been studied for various categories of algebraic objects by many authors; see for instance \cite{Casadio41,Zappa42,Szep50,Szep51,RedeiSzep55,Szep62,Takeuchi81,Kunze83,PicantinPhD,Picantin01,Brin05,Brin07,Godelle10,Agore09,AgoreMilitaru11,Brownlowe}.
There are subtle differences in definitions between some of these references, for instance regarding the symmetry under swapping of factors.
\medskip

\noindent
In the context of Garside monoids, \ZS{} products were studied by Picantin \cite{Picantin01}; he used the term \emph{crossed products}.
Given a family $M_1,\ldots,M_\ell$ of Garside monoids and a family of maps $\Theta_{i,j}:M_i\times M_j\to M_j$ that satisfy certain compatibility conditions, Picantin constructs a Garside structure on the set $M_1\times\cdots\times M_\ell$, that is, he considers \emph{external} products.  (He also uses this construction for decomposing Garside monoids, namely to show that every Garside monoid is the iterated crossed product of Garside monoids that have a cyclic centre.)

The construction in~\cite{Picantin01} is, however, very technical. The difficulty comes, firstly, from the compatibility conditions for the maps~$\Theta_{i,j}$, which are needed to ensure that the way in which the factors~$M_i$ are made to interact is consistent and the crossed product is well-defined;
and, secondly, from the fact that crossed products on more than two factors are considered.
In the setting of~\cite{Picantin01}, both considering external products (which require explicit compatibility conditions) and considering products involving more than two factors is necessary, so these complications are unavoidable.

Our situation is different:  We are primarily interested in \emph{decomposing} a given Garside monoid into simpler components that are also Garside monoids.  In this situation, the way in which the potential factors interact is defined by the ambient monoid and there is no need for explicit compatibility conditions.  It is therefore natural for us to consider \emph{internal} \ZS{} products:

\begin{definition}\label{D:ZappaSzepProduct}
  \rightskip30mm
  Given a monoid $K$ with two submonoids~$G$ and~$H$, say that~$K$ is
  the (internal) \emph{\ZS{} product} of~$G$ and~$H$, written
  $K=G\zs H$, if for every $k\in K$ there exist unique $g_1, g_2\in
  G$ and $h_1, h_2 \in H$ such that $g_1 h_1 = k = h_2 g_2$.

  We will say that $g_1 h_1$ is the $G H$-decomposition of $k$ and
  that $h_2 g_2$ is its $H G$-decomposition.
  \hfill\begin{picture}(0,0)(-10,0)
  \begin{tikzpicture}[scale=1.25]
    \draw [->] (0.05,0.95) -- (0.92,0.08) node [right] at (0.45,0.57) {$k$};
    \draw [->] (0,0.95) -- (0,0.05) node [left] at (0,0.5) {$g_1$};
    \draw [->] (0.05,0) -- (0.95,0) node [below] at (0.5,0) {$h_1$};
    \draw [->] (0.05,1) -- (0.95,1) node [above] at (0.5,1) {$h_2$};
    \draw [->] (1,0.95) -- (1,0.05) node [right] at (1,0.5) {$g_2$};
  \end{tikzpicture}
  \end{picture}
  \par
  \rightskip0mm
\end{definition}
It is obvious from this definition that forming internal \ZS{} products is commutative (that is, $K=G\zs H$ if and only if $K=H\zs G$).
It is, however, not associative, that is $K = (F \zs G) \zs H$, meaning that there exists a submonoid $K'$ such that $K = K' \zs H$ and $K' = F \zs G$, does not imply $K = F \zs (G\zs H)$; see \autoref{E:SemidirectProduct}.
As our construction can easily be applied iteratively, we can restrict to the case of two factors.
\medskip

\noindent
Picantin shows that, for each
ordering of the factors in a crossed product, an element of the
product can be written uniquely as a product of elements of the
factors in that order~\cite[Proposition~3.6]{Picantin01}.
Thus, every crossed product is also an internal \ZS{} product in the sense of \autoref{D:ZappaSzepProduct}, so in particular, every monoid that can be decomposed as a crossed product can also be decomposed as an internal \ZS{} product.
\emph{A priori}, decomposability as an internal \ZS{} product could be weaker (cf.\ \autoref{E:NotInjective}), although it turns out that the notions are equivalent in the case of Garside monoids; cf.\ \autoref{C:ZS-crossed}.%
\medskip

In the context of general (i.e., non Garside) modoids and categories, both internal and external \ZS{} products are discussed in detail in~\cite{Brin05}; see also~\cite{Brin07,Brownlowe}.
\cite{Brin05} defines both internal and external \ZS{} products of monoids, and proves that both notions are equivalent~\cite[Lemma~3.9]{Brin05}.  (The author calls the external version ``not attractive''.)

It must be emphasised, however, that external \ZS{} products in the sense of~\cite{Brin05} are not the same as crossed products in the sense of~\cite{Picantin01}:
\autoref{E:NotInjective} gives a monoid that is the (internal and external) \ZS{} product of two submonoids, but that cannot be decomposed as a crossed product.
In particular, it cannot be concluded from~\cite[Lemma~3.9]{Brin05} that the notions of internal \ZS{} products (\autoref{D:ZappaSzepProduct}) and crossed products coincide.
It follows with the results from~\cite{Picantin01} and~\autoref{action-bijections} that they do coincide in the situation of Garside monoids (cf.\ \autoref{C:ZS-crossed}), but this is not obvious \emph{a priori}.
\bigskip

\noindent
The main contributions of this paper to the concept of \ZS{} products of Garside monoids which was introduced in~\cite{Picantin01} are the following:\vspace{-\itemsep}
\begin{enumerate}[itemindent=6mm,listparindent=\parindent,itemsep=\parsep,leftmargin=0cm]
\item
We develop a notion of internal \ZS{} products of Garside monoids which is less technical than that of crossed products and more suitable for analysing decompositions of Garside monoids:
Our definition of an internal \ZS{} product assumes \emph{only} the existence of unique decompositions of elements;
from this definition we derive natural maps which we prove to be bijective actions preserving algebraic structures in certain situations.
Our approach in this regard is analogous to the one in~\cite{Brin05}.
The definition of crossed products in~\cite{Picantin01} features similar maps, but it imposes various properties of the latter as requirements.

While decomposability as a crossed product and decomposability as an internal \ZS{} product turn out to be equivalent in the case of Garside monoids, this is not at all obvious from the definitions.
Indeed, this equivalence suggests that the definitions of crossed products and internal \ZS{} products are in some sense optimal, as they describe a structure that is obtained naturally by both, constructing and deconstructing Garside monoids as products of submonoids.

\item Our techniques allow us to relate the regular language of normal forms in a \ZS{} product to the languages of normal forms in its factors; there are no similar results in~\cite{Picantin01}:
We give explicit translations, in both directions, between an automaton accepting the regular language of normal forms in a \ZS{} product on the one hand and a pair of automata accepting the regular languages of normal forms in the factors on the other hand.  We also give explicit algorithms transforming normal forms.
\end{enumerate}
\medskip

\noindent
The structure of the paper is as follows:
In \autoref{S:Background} we recall the main concepts used in the paper and fix notation.
In \autoref{S:Actions} we define actions of the factors of a \ZS{} product on each other and analyse their properties.
\autoref{S:ActionsForGarsideMonoids} is devoted to the case that either the \ZS{} product of two monoids is a Garside monoid or that both of the factors are; we will show that both conditions are equivalent.
Finally, in \autoref{S:Garside-ZS} we consider the situation where the Garside elements of the factors and of the product are chosen in a compatible way; we show that in this case the regular language of normal forms in the product can be described effectively in terms of those of the factors.

\section{Background}\label{S:Background}

In order to fix notation, we briefly recall the main concepts used in the paper.  For details we refer to \cite{DehornoyParis99,Dehornoy02,GarsideBook}.

Let $M$ be a monoid and $\id$ its identity.
The monoid $M$ is called \emph{left-cancellative} if for any $x,y,y'$ in $M$, the equality $xy = xy'$ implies $y = y'$.
Similarly, $M$ is called \emph{right-cancellative} if for any $x,y,y'$ in $M$, the equality $yx = y'x$ implies $y = y'$.

For $x,y\in M$, we say that $x$ is a \emph{left-divisor} or \emph{prefix} of $y$, writing $x \prefix_M y$, if there exists an element $u\in M$ such that $y = xu$.  If the monoid is obvious, we simply write $x \prefix y$ to reduce clutter.
Similarly, we say that $x$ is a \emph{right-divisor} or \emph{suffix} of $y$,
writing $y\suffix_M x$ or $y\suffix x$, if there exists $u\in M$ such that $y = ux$.
Moreover, we say that $x$ is a \emph{factor} of $y$, writing $x\factor y$, if there exist elements $u,v\in M$ such that $y=uxv$.
If $M$ does not contain any non-trivial invertible elements, then the relation $\prefix$ is a partial order if $M$ is left-cancellative, and the relation $\suffix$ is a partial order if $M$ is right-cancellative.

An element $a\in M\setminus\{\id\}$ is called an \emph{atom} if whenever $a = uv$ for $u,v\in M$, either $u = \id$ or $v = \id$ holds.  The existence of atoms implies that $M$ does not contain any non-trivial invertible elements.
The monoid $M$ is said to be
\emph{atomic} if it is generated by its set $\Atoms$ of atoms and if for every
element $x\in M$ there is an upper bound on the length of
decompositions of $x$ as a product of atoms, that is, if
$||x||_{\Atoms} := \sup\{ k\in\NN : x=a_1\cdots a_k \text{ with }a_1,\ldots,a_k\in\Atoms \} < \infty$.

An element $d\in M$ is called \emph{balanced}, if the set of its left-divisors is equal to the set of its right-divisors.  In this case, we write $\Div(d)$ for the set of (left- and right-) divisors of $d$.
\begin{definition}
  A \emph{quasi-Garside structure} is a pair $(M, \Delta)$ where $M$
  is a monoid and $\Delta$ is an element of $M$ such that
  \begin{enumerate}[label=(\alph*)] \itemsep 0em
  \item $M$ is cancellative and atomic,
  \item the prefix and suffix relations are lattice orders, that is, for any pair of elements there exist unique least common upper bounds and unique greatest common lower bounds with respect to $\prefix$ respectively $\suffix$,
  \item $\Delta$ is balanced, and
  \item $M$ is generated by the divisors of $\Delta$.
  \end{enumerate}
  If the set of divisors of $\Delta$ is finite then we say that $(M,
  \Delta)$ is a \emph{Garside structure}.

  A monoid $M$ is a (quasi)-Garside monoid if there exists a
  \emph{(quasi)-Garside element} $\Delta \in M$ such that $(M, \Delta)$ is a
  (quasi)-Garside structure.
\end{definition}
\begin{remark}
  If $M$ is a Garside monoid then the choice of Garside element is not
  unique.  Indeed, if $\Delta$ is a Garside element then $\Delta^\ell$ is
  also a Garside element for all $\ell \in \NN$.

  If $(M,\Delta)$ is a quasi-Garside structure in the above sense, then in the terminology of \cite{GarsideBook}, the set $\Div(\Delta)$ forms a bounded Garside family for the monoid $M$.
  The elements of $\Div(\Delta)$ are called \emph{simple elements}.  (Note that the set of simple elements depends on the choice of the Garside element.)
\end{remark}

\begin{definition}
  A monoid $M$ is \emph{conical} if for all $x,y \in M$, $x y = \id$
  implies that $x=\id=y$.  In particular, all Garside monoids are
  conical.
\end{definition}

\begin{notation}
If $M$ is a left-cancellative atomic monoid, then least common upper bounds and greatest common lower bounds are unique if they exist.  In this situation, we will write $x \join y$ for the $\prefix$-least common upper bound of $x,y\in M$ if it exists, and we write $x \meet y$ for their $\prefix$-greatest common lower bound if it exists.
If $x,y\in M$ admit a $\prefix$-least common upper bound, we define $x\under y$ as the unique element of $M$ satisfying $x(x\under y)=x\join y$.

Similarly, if $M$ is a right-cancellative atomic monoid, we will write $x \rightjoin y$ and $x \rightmeet y$ for the $\suffix$-least common upper bound respectively the $\suffix$-greatest common lower bound of $x$ and $y$ if they exist, and if $x$ and $y$ admit a $\suffix$-least common upper bound, we define $y/x$ as the unique element of $M$ satisfying $(y/x)x=x\rightjoin y$.

\begin{lemma}[{\cite[Lemme 1.7]{Dehornoy02}}]\label{L:Complements}
If $M$ is a Garside monoid, then one has
\begin{minipage}{7cm}%
\begin{align*}
   (ab)\under c &= b\under (a\under c) \\
   c\under(ab) &= (c\under a)\cdot \big((a\under c)\under b\big)
\end{align*}
for any $a,b,c\in M$.
\end{minipage}
\hfill
\begin{minipage}{4.25cm}%
\begin{tikzpicture}[scale=1.25]
  \draw [->] (0,0.95) -- (0,0.05) node [left] at (0,0.5) {$c$};
  \draw [->] (0.05,0) -- (0.95,0) node [below] at (0.5,0) {$c\under a$};
  \draw [->] (1.05,0) -- (1.95,0) node [below] at (1.5,0) {$(a\under c)\under b$};
  \draw [->] (0.05,1) -- (0.95,1) node [above] at (0.5,1) {$a$};
  \draw [->] (1.05,1) -- (1.95,1) node [above] at (1.5,1) {$b$};
  \draw [->] (1,0.95) -- (1,0.05) node [right] at (1,0.5) {$a\under c$};
  \draw [->] (2,0.95) -- (2,0.05) node [right] at (2,0.5) {$b\under(a\under c)$};
  \end{tikzpicture}
\end{minipage}
\end{lemma}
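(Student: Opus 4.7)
The plan is to recognise that the diagram in the statement asserts a single equality, namely
\[ (ab)\join c \;=\; a\cdot b\cdot\bigl(b\under(a\under c)\bigr) \;=\; c\cdot (c\under a)\cdot\bigl((a\under c)\under b\bigr), \]
from which both identities of the lemma are extracted by left-cancellation against $(ab)\bigl((ab)\under c\bigr)=(ab)\join c= c\bigl(c\under(ab)\bigr)$. Since $M$ is Garside, it is left-cancellative and $\prefix$ is a lattice order, so every complement appearing below is well-defined and the use of $\join$ is legitimate.

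Set $u := a\under c$, $v := c\under a$, $w := u\under b$ and $Y := b\under u$. By the definition of the complement these satisfy $au=cv=a\join c$ and $uw=bY=u\join b$, so the element
\[ N \;:=\; ab\cdot Y \;=\; a\cdot(uw) \;=\; (au)\cdot w \;=\; c\cdot vw \]
is a common multiple of $ab$ and~$c$, giving $(ab)\join c\prefix N$. For the reverse divisibility I would work upward: since $a\prefix ab\prefix (ab)\join c$ and $c\prefix (ab)\join c$, the join $au=a\join c$ divides $(ab)\join c = ab\cdot\bigl((ab)\under c\bigr)$; left-cancelling~$a$ yields $u\prefix b\cdot\bigl((ab)\under c\bigr)$, and since $b$ trivially divides the same element, the join $bY=b\join u$ divides it too, so a second left-cancellation gives $Y\prefix (ab)\under c$. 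Multiplying on the left by $ab$ then gives $N\prefix (ab)\join c$, whence $N=(ab)\join c$.

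With this key equality in hand, left-cancelling $ab$ in $ab\cdot Y = (ab)\bigl((ab)\under c\bigr)$ produces the first identity $(ab)\under c = Y = b\under(a\under c)$, and left-cancelling $c$ in $c\cdot vw = c\bigl(c\under(ab)\bigr)$ produces the second identity $c\under(ab) = vw = (c\under a)\cdot\bigl((a\under c)\under b\bigr)$. The only real difficulty is notational bookkeeping: one must keep the asymmetric complements $b\under u$ and $u\under b$ distinct, use left-cancellation at the right moments rather than the wrong ones, and ensure each invocation of $\join$ refers to an existing join; the diagram provides a reliable visual aid that makes this essentially mechanical.
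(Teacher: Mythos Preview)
Your proof is correct. The paper does not actually supply a proof of this lemma: it is quoted as \cite[Lemme~1.7]{Dehornoy02} and used as a black box, so there is nothing to compare against beyond noting that your argument---show $N:=abY=cvw$ is a common $\prefix$-multiple, then squeeze $(ab)\join c$ between $N$ and itself via two left-cancellations---is the standard derivation one finds in Dehornoy's work. The bookkeeping with $u,v,w,Y$ is accurate and the cancellations are applied in the right places.
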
%

\noindent
If $(M,\Delta)$ is a Garside structure, we write $\Simples_M$ for the set of simple elements $\Div(\Delta)$, and we define the set of \emph{proper} simple elements as $\pSimples_M = \Simples_M \setminus \{ \id, \Delta \}$, where $\id$ is the identity element of~$M$.  To avoid clutter, we will usually drop the subscript if there is no danger of confusion.
For $x\in\Simples$, there exists a unique element $\complement x=\complement_M x\in\Simples$ such that $x \complement x=\Delta$.  Clearly, $\complement x\in\pSimples$ iff $x\in\pSimples$.
Moreover, for $x\in M$, we define $\Delta_x :=  \Delta_x^{M} := \Join\{ y\under x : y \in M \}$%
~\cite{Picantin01}.

Given a set $X$ we will write $X^* = \Union_{i = 0}^{\infty} X^i$ for
the set of strings of elements of $X$.  We will write $\epsilon$ for
the empty string and separate the letters of a string with dots, for
example we will write $a \ldot b \ldot a \in \{a,b\}^*$.

Given a (quasi)-Garside structure $(M, \Delta)$ we can define the \emph{left
normal form} of an element by repeatedly extracting the
$\prefix$-GCD of the element and $\Delta$.  More precisely, the
normal form of $x \in M$ is the unique word $\NF(x) = x_1 \ldot x_2
\ldot \cdots \ldot x_\ell$ in $(\Simples\setminus\{\id\})^*$ such that $x = x_1 x_2 \cdots
x_\ell$ and $x_i = \Delta \meet x_i x_{i+1} \cdots x_\ell$ for $i=1,\ldots,\ell$, or equivalently, $\complement x_{i-1} \meet x_i=\id$ for $i=2,\ldots,\ell$.
We write $x_1|x_2|\cdots|x_\ell$ for the word $x_1 \ldot x_2 \ldot \cdots \ldot x_\ell$
together with the proposition that this word is in normal form.

If $x_1|x_2|\cdots|x_\ell$ is the normal form of $x\in M$, we define the \emph{infimum} of $x$ as
$\inf(x) = \max\{ i\in\{1,\ldots,\ell\} : x_i = \Delta \}$, the \emph{supremum} of $x$ as $\sup(x) = \ell$, and the \emph{canonical length} of $x$ as $\cl(x) = \sup(x)-\inf(x)$.  Note that $\inf(x)$ is the largest integer~$i$ such that $\Delta^i\prefix x$ holds, and $\sup(x)$ is the smallest integer~$i$ such that $x\prefix \Delta^i$ holds.

Let $\calL$ be the language on the set $\pSimples$ of proper simple elements
consisting of all words in normal form, and write $\calL^{(n)}$ for
the subset consisting of words of length $n$:
\[
  \calL := \bigcup_{n\in\NN} \calL^{(n)} \quad\text{where}\quad
  \calL^{(n)} := \big\{ 
      x_1 \ldot \cdots \ldot x_n \in \big(\pSimples\big)^* :
      \forall i,\ \complement x_i \meet x_{i+1} = \id 
    \big\}
\]

We also define
\[
 \calLbar := \bigcup_{n\in\NN} \calLbar^{(n)} \quad\text{where}\quad
  \calLbar^{(n)} := \big\{ 
      x_1 \ldot \cdots \ldot x_n \in (\Simples\setminus\{\id\})^* :
      \forall i,\ \complement x_i \meet x_{i+1} = \id 
    \big\} \;.
\]
\end{notation}

\begin{definition}[{\cite[Definition 2.2]{Godelle07}}]\label{D:ParabolicSubmonoid}
Let $M$ be a Garside monoid with set of atoms $\Atoms$,
let $\delta$ be a balanced simple element of $M$, and let $M_\delta$ be the submonoid of~$M$ generated by
$\{ a\in \Atoms : a \prefix\delta\}$.

$M_\delta$ is a \emph{parabolic submonoid} of $M$, if
$\{ x \in M : x\prefix \delta \} = \Simples \cap M_\delta$ holds.
\end{definition}

\begin{proposition}[{\cite[Lemma 2.1]{Godelle07}}]\label{P:ParabolicSubmonoid}
If $M$ is a Garside monoid and~$\delta$ is a balanced simple element of~$M$ such that~$M_\delta$ is a parabolic submonoid of~$M$, then $M_\delta$ is a sublattice of $M$ for both $\prefix$ and $\suffix$ that is closed under the operations $\under$ and $/$.
In particular, $M_\delta$ is a Garside monoid with Garside element $\delta$.
\end{proposition}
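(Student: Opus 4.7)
The plan is to prove the following intermediate lemma as the technical heart of the argument: for every $y \in M_\delta$, all factors appearing in the left normal form of $y$ (computed in the ambient monoid $M$) lie in $\Div(\delta) = \Simples \cap M_\delta$. Once this lemma is available, closure of $M_\delta$ under $\meet$, $\join$, $\under$ and $/$, and the Garside property for $(M_\delta, \delta)$, will follow essentially by inspection.

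First I would collect the relevant preliminary facts about the interval $\Div(\delta)$: because $\delta$ is a balanced simple element of $M$, the set $\Div(\delta)$ forms a finite sublattice of both $(M, \prefix)$ and $(M, \suffix)$, closed under the complement operation $s \mapsto \complement s$ taken with respect to $\delta$; the parabolic hypothesis identifies this interval with $\Simples \cap M_\delta$. Next I would prove the key lemma by induction on $\|y\|_\Atoms$. The base case $y \in \Atoms \cap M_\delta$ is immediate since such atoms are prefixes of $\delta$. For the inductive step, write $y = y' a$ with $a$ an atom of $M_\delta$ and $y'$ of shorter atomic length; by induction $\NF(y') = y_1 | \cdots | y_n$ with all $y_i \in \Div(\delta)$. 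Applying the standard right-multiplication renormalisation algorithm, I would show that each local rewriting step replaces a pair $s \ldot t$ (with $s, t \in \Div(\delta)$) by a pair $s' \ldot t'$ that again lies in $\Div(\delta)$: the crucial observation is that $\complement s \meet t \in \Div(\delta)$, whence $s' = s \cdot (\complement s \meet t)$ remains a prefix of $s \cdot \complement s = \delta$, and $t'$, being a right-divisor of the simple element $t$, is again simple and belongs to $\Div(\delta)$ by the suffix-sublattice property.

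Given the key lemma, the remaining closure properties follow quickly. Any prefix of $y \in M_\delta$ can be exhibited as a product of factors in $\Div(\delta)$ by truncating $\NF(y)$ at an appropriate position and using $\meet$ and $\under$ within $\Div(\delta)$; hence $M_\delta$ is closed under left-division in $M$. For $x, y \in M_\delta$ the meet $x \meet y$ is a prefix of $x$ and therefore lies in $M_\delta$, while the join $x \join y = x (x \under y)$ is handled by an atom-by-atom decomposition of $y$ combined with iterated application of \autoref{L:Complements}, showing inductively that each partial complement $x \under (b_1 \cdots b_k)$ lies in $M_\delta$. Symmetric arguments using right normal forms give the corresponding statements for $\suffix$, $\rightjoin$, $\rightmeet$ and $/$. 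With these closures in hand, the Garside axioms for $(M_\delta, \delta)$ are essentially inherited from $(M, \Delta)$: cancellativity and atomicity pass to the submonoid, the lattice structures on $M_\delta$ coincide with the restrictions of those on $M$, $\delta$ is balanced in $M_\delta$ because its divisors in $M$ already lie in $M_\delta$ by the parabolic hypothesis, and $M_\delta$ is generated by $\Div(\delta)$ by the very definition of a parabolic submonoid.

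The main obstacle will be the careful execution of the local step in the normal form rewriting, namely verifying that at each rewrite the new simple factors remain in $\Div(\delta)$. This rests on $\delta$ being balanced and simple, which is exactly what makes $\Div(\delta)$ self-contained under $\meet$, $\join$ and $\complement$, so that the renormalisation procedure never leaves $M_\delta$. Everything else in the proposition is a fairly direct consequence of this preservation property together with the parabolic hypothesis $\Div(\delta) = \Simples \cap M_\delta$.
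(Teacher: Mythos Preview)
The paper does not prove this proposition; it is quoted from \cite[Lemma~2.1]{Godelle07} without argument, so there is no in-paper proof to compare against. Your overall strategy---show that the $M$-normal form of every $y\in M_\delta$ has all its factors in $\Div(\delta)$, deduce prefix- and suffix-closure of $M_\delta$, and then derive the lattice closures---is sound and is the natural line of attack.

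There is, however, a real gap in the local renormalisation step of your key lemma. You assert that for $s,t\in\Div(\delta)$ the new first factor $s'=s\cdot(\complement s\meet t)$ ``remains a prefix of $s\cdot\complement s=\delta$'', having earlier declared $\complement$ to mean the complement taken with respect to $\delta$. But the renormalisation you are running is the one in the ambient monoid $M$, whose local step produces $\Delta\meet(st)=s\cdot(\complement_\Delta s\meet t)$, and $s\cdot\complement_\Delta s=\Delta$, not $\delta$. If you read $\complement=\complement_\delta$, your displayed inequality is trivially true, but you have not shown that $s\cdot(\complement_\delta s\meet t)$ coincides with $\Delta\meet(st)$; if you read $\complement=\complement_\Delta$, the conclusion $s'\prefix\delta$ is precisely the point at issue, and ``$s\cdot\complement_\Delta s=\delta$'' is false. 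Either way the step is circular as written. The correct argument uses $\complement_\Delta$ and invokes the parabolic hypothesis exactly here: one has $\complement_\Delta s\meet t\prefix t\prefix\delta$, so $\complement_\Delta s\meet t\in\Div(\delta)\subseteq M_\delta$, whence $s'=s\cdot(\complement_\Delta s\meet t)\in M_\delta$; since also $s'\in\Simples_M$, the hypothesis $\Simples_M\cap M_\delta=\{x\in M:x\prefix\delta\}$ now gives $s'\prefix\delta$. This is the one place in the whole argument where the parabolic condition does non-trivial work, and your sketch skips over it.
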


\begin{remark}
If $M_\delta$ is a parabolic submonoid of~$M$, then for any $x\in M_\delta$, the left normal form of~$x$ in the Garside monoid $M_\delta$ coincides with its left normal form of~$x$ in the Garside monoid~$M$.
\end{remark}
\medskip

\begin{definition}[\cite{Picantin01}]\label{D:QuasiCentre}
For a Garside monoid $M$ with set of atoms $\Atoms$ we define the \emph{quasi-centre} of~$M$ as
$\QZ := \QZ_M := \{ u\in M : \Atoms u = u \Atoms \}$, and we say that~$M$ is \emph{$\Delta$-pure} if $\Delta_a=\Delta_b$ holds for any $a,b\in\Atoms$.
\end{definition}

\begin{proposition}[\cite{Picantin01}]\label{P:QuasiCentre}
Let $M$ be a Garside monoid with set of atoms $\Atoms$.
 \begin{enumerate}[label=\upshape{(\alph*)}]
  \item For any $x\in M$ and $c\in\QZ$, one has $x\prefix c \Longleftrightarrow c\suffix x \Longleftrightarrow x\factor c$. 
  \item For any $a,b\in \Atoms$, one has either $\Delta_a = \Delta_b$ or $\Delta_a\meet\Delta_b=\id$. 
  \item If $c_1, c_2\in \QZ$, then $c_1\meet c_2\in \QZ$. 
  \item For any $x\in M$, one has $\Delta_x = \Meet ( \QZ \intersect xM )$. 
    In particular, $\Delta_x\in\QZ$ and $x \prefix \Delta_x$.
  \item For any $x,y\in M$, one has $\Delta_{x\join y} = \Delta_x\join\Delta_y$. 
  \item $\QZ$ is a free abelian monoid with basis $\{ \Delta_a : a\in\Atoms \}$. 
 \end{enumerate}
\end{proposition}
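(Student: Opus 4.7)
The plan is to prove the six parts in the dependency order (a), (d), (c), (b), (e), (f), anchored on the automorphism of $M$ induced by a quasi-central element. For (a), I would promote the hypothesis $\Atoms c = c\Atoms$ to $Mc = cM$ by induction on atomic length (which is additive in a Garside monoid), yielding a bijection $\phi_c \colon M \to M$ with $xc = c\phi_c(x)$ for every $x$; $\phi_c$ is automatically a monoid automorphism permuting $\Atoms$. The three equivalences follow: if $c = xu$ then in the group of fractions $cx^{-1} = \phi_c^{-1}(u) \in M$, giving $c \suffix x$; the reverse direction is analogous; and $x \factor c$ with $c = vxu$ reduces to $x \prefix c$ by applying the equivalence first to the left divisor $vx$ of $c$.

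For (d), observe that conjugation by $\Delta$ permutes the finite set $\Atoms$, so some power $\Delta^N$ lies in $\QZ$ and $\QZ \cap xM$ is non-empty; denote its $\prefix$-minimum by $c = xd$. Applying (a) to $c$ translates $c \in \QZ$ into the requirement $y \under x \prefix d$ for every $y \in M$, so $d = \Join\{y \under x : y \in M\} = \Delta_x$; in particular $\Delta_x \in \QZ$ and $x \prefix \Delta_x$. Part (c) then follows from (a): for any atom $a$,
\[
  a(c_1 \meet c_2) = (ac_1) \meet (ac_2) = c_1\phi_{c_1}(a) \meet c_2\phi_{c_2}(a),
\]
and since $c_1 \meet c_2$ is a prefix of the right-hand side, the cofactor has atomic length one by additivity, giving the required atom.

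For (b), if $\Delta_a \meet \Delta_b \neq \id$, this element is quasi-central by (c), and for any atomic prefix $a'$ of it, (d) gives $\Delta_{a'} \prefix \Delta_a \meet \Delta_b$; a case analysis using the minimality characterisation in (d) together with the orbit structure of the atoms under $\phi_{\Delta_a}$ then forces $\Delta_{a'} = \Delta_a = \Delta_b$. Part (e) follows from (d) by identifying both $\Delta_{x \join y}$ and $\Delta_x \join \Delta_y$ with the $\prefix$-minimum of $\QZ \cap (x \join y)M$. Finally, (f) uses (b) to get pairwise $\meet$-triviality of the distinct $\Delta_a$'s; (a) combined with coprimality yields pairwise commutativity ($\Delta_a \Delta_b = \Delta_a \join \Delta_b = \Delta_b \Delta_a$), so they generate a free abelian submonoid, and surjectivity onto $\QZ$ follows by induction on atomic length, extracting $\Delta_{a'} \prefix c$ for any atomic prefix $a'$ of a non-trivial $c \in \QZ$.

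The main obstacle I expect is the verification in (d) that $\Join\{y \under x : y \in M\}$ exists as a genuine element of $M$ and equals the quotient of $\Meet(\QZ \cap xM)$ by $x$: this requires combining the complement identities of \autoref{L:Complements} with an upper bound argument using $\Delta^N$ and the finiteness of $\Simples$ to bound the apparently infinite join, and essentially every subsequent part piggybacks on this identification.
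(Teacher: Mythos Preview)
The paper does not prove this proposition at all: its ``proof'' consists entirely of citations to \cite{Picantin01} (specifically Lemma~1.7, Lemma~2.9, Lemma~2.11, Proposition~2.12, Lemma~2.14, Proposition~2.15). Your proposal is therefore not comparable to the paper's approach; you are attempting a self-contained reconstruction of Picantin's arguments, which the paper explicitly declines to do.

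That said, there is a dependency problem in your outline. You propose the order (a), (d), (c), but your argument for (d) begins by letting $c$ be the $\prefix$-minimum of $\QZ \cap xM$, and the existence of that minimum (as opposed to a mere infimum) requires $\QZ$ to be closed under $\meet$, which is precisely (c). Since your proof of (c) uses only (a) and the automorphism $\phi_c$, this is easily repaired by reordering to (a), (c), (d); once (c) is available, $\QZ \cap xM$ is closed under binary meets, it is non-empty (your $\Delta^N$ argument), and every element has only finitely many left divisors because $\Atoms$ is finite and atomic length is bounded, so the minimum genuinely exists. You should make this reordering explicit.

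A secondary concern: your sketch for (b) (``a case analysis using the minimality characterisation in (d) together with the orbit structure of the atoms under $\phi_{\Delta_a}$'') is too vague to assess. The step from $\Delta_{a'} \prefix \Delta_a \meet \Delta_b$ to $\Delta_{a'} = \Delta_a = \Delta_b$ is where the real content of Picantin's Lemma~2.9 lies, and you have not indicated what the case analysis actually is or why the orbit of $a$ under $\phi_{\Delta_a}$ controls the situation. This is the place most likely to hide a genuine gap.
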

\begin{proof}
The claims hold by \cite[Lemma~1.7, Lemma~2.9, Lemma~2.11, Proposition~2.12, Lemma~2.14, Proposition~2.15]{Picantin01}.
\end{proof}

\begin{remark}
The results from \cite{Picantin01} used in the proof of \autoref{P:QuasiCentre} do not depend on the notion of crossed products.
\end{remark}
\medskip

\noindent
We will only consider the prefix lattice, but the left-right symmetry
of our definitions means that analogous results hold for the suffix
ordering and the right normal form; cf.\ \autoref{transforming-expressions}.

\section{\hspace*{-1.3pt}Actions on the factors of \ZS{} products}\label{S:Actions}

In the situation of \autoref{D:ZappaSzepProduct}, the process of rewriting the $GH$-decomposition of an element into its $HG$-decomposition, or vice versa, defines a left-action and a right-action of~$H$ on~$G$, as well as a left-action and a right-action of $G$ on $H$; this section is devoted to analysing these actions.

Our treatment is analogous to~\cite{Brin05}, except that \autoref{D:ZappaSzepProduct} is symmetric under swapping the two factors of a \ZS{} product, whereas in~\cite{Brin05}, the factors play different roles.  The situation of \autoref{D:ZappaSzepProduct} is equivalent to requiring both $K=G\zs H$ and $K=H\zs G$ in the terminology of~\cite{Brin05}.

\begin{definition}\label{D:actions}
  Converting $HG$-decompositions into $GH$-decompositions, and vice
  versa, gives us the following maps:
  \begin{align*}
    H \times G & \longrightarrow G & 
    H \times G & \longrightarrow H \\
    (h, g)     & \longmapsto h \leftacts g &
    (h, g)     & \longmapsto h \rightacts g \\[1em] 
    G \times H & \longrightarrow H & 
    G \times H & \longrightarrow G \\
    (g, h)     & \longmapsto g \Leftacts h &
    (g, h)     & \longmapsto g \Rightacts h
  \end{align*}
  such that 
  $h g = (h \leftacts g)(h \rightacts g)$ and $g h = (g \Leftacts h)(g \Rightacts h)$.
  \medskip
  
  \noindent
  These definitions correspond to the following commutative diagrams:\medskip

  \hfill
  \begin{minipage}{3cm}
    \begin{tikzpicture}[scale=1.50]
      \draw [->] (0,0.95) -- (0,0.05) node [left] at (0,0.5) {$g$};
      \draw [->] (0.05,0) -- (0.95,0) node [below] at (0.5,0) {$h$};
      \draw [->] (0.05,1) -- (0.95,1) node [above] at (0.5,1) {$g\Leftacts h$};
      \draw [->] (1,0.95) -- (1,0.05) node [right] at (1,0.5) {$g\Rightacts h$};
    \end{tikzpicture}
  \end{minipage}
    \qquad\text{and}\qquad
  \begin{minipage}{3cm}
    \begin{tikzpicture}[scale=1.50]
      \draw [->] (0,0.95) -- (0,0.05) node [left] at (0,0.5) {$h\leftacts g$};
      \draw [->] (0.05,0) -- (0.95,0) node [below] at (0.5,0) {$h\rightacts g$};
      \draw [->] (0.05,1) -- (0.95,1) node [above] at (0.5,1) {$h$};
      \draw [->] (1,0.95) -- (1,0.05) node [right] at (1,0.5) {$g$};
    \end{tikzpicture}
  \end{minipage}
  \hfill\,
\end{definition}

\begin{remark}[Comparison with crossed products \cite{Picantin01}]\quad
\begin{enumerate}[itemindent=6mm,listparindent=\parindent,itemsep=1.5ex,parsep=0ex,leftmargin=0cm,topsep=1ex]
 \item
 Let~$G$ and~$H$ be cancellative, conical monoids with finitely many atoms, and let $\Theta_{1,2}: G\times H\to H$ and $\Theta_{2,1}: H\times G\to G$ satisfy the following conditions:
 \begin{enumerate}[leftmargin=18mm,label=(CP-\alph*)\hspace{2mm}]
  \item
    For any $g\in G$ and any $h\in H$, the maps $h'\mapsto \Theta_{1,2}(g,h')$ and $g' \mapsto \Theta_{2,1}(h,g')$ are bijections.
  \item
    For any $g,g_1,g_2\in G$ and $h,h_1,h_2\in H$, one has
    \begin{align*}
       & \Theta_{1,2}(g_1 g_2,h) = \Theta_{1,2}(g_2, \Theta_{1,2}(g_1,h))
       \\
       & \Theta_{2,1}(h_1 h_2,g) = \Theta_{2,1}(h_2, \Theta_{2,1}(h_1,g))
       \\
       & \Theta_{1,2}(g,h_1 h_2) = \Theta_{1,2}(g, h_1) \Theta_{1,2}(\Theta_{2,1}(h_1,g),h_2)
       \\
       & \Theta_{2,1}(h,g_1 g_2) = \Theta_{2,1}(h, g_1) \Theta_{2,1}(\Theta_{1,2}(g_1,h),g_2)
       \;.
    \end{align*}
 \end{enumerate}
 The crossed product $G\zs_\Theta H$ of~$G$ and~$H$ with respect to~$\Theta_{1,2}$ and $\Theta_{2,1}$ is defined~\cite{Picantin01} as the quotient of the free product of~$G$ and~$H$ by the relations corresponding to the following commutative diagrams:
 \begin{center}
   \begin{tikzpicture}[scale=1.50]
     \draw [->] (0,0.95) -- (0,0.05) node [left] at (0,0.5) {$g$};
     \draw [->] (0.05,0) -- (0.95,0) node [below] at (0.5,0) {$\Theta_{1,2}(g,h)$};
     \draw [->] (0.05,1) -- (0.95,1) node [above] at (0.5,1) {$h$};
     \draw [->] (1,0.95) -- (1,0.05) node [right] at (1,0.5) {$\Theta_{2,1}(h,g)$};
   \end{tikzpicture}
 \end{center}
 By~\cite[Proposition~3.6]{Picantin01}, every element of $G\zs_\Theta H$ admits a unique $GH$-de\-com\-pos\-ition and a unique $HG$-decomposition, that is, $G\zs_\Theta H$ is the internal \ZS{} product of its submonoids~$G$ and~$H$, and maps $\leftacts$, $\rightacts$, $\Leftacts$, $\Rightacts$ as in~\autoref{D:actions} exist.
 
 \item
 Conversely, we shall see that in an internal \ZS{} product $G\zs H$ where the factors~$G$ and~$H$ are cancellative, conical monoids with finitely many atoms, and if moreover common multiples exist in~$G$ and~$H$, the maps
   \[ g' \mapsto h \leftacts g'
      \qquad
      g' \mapsto g' \Rightacts h
      \qquad
      h' \mapsto h' \rightacts g
      \qquad
      h' \mapsto g \Leftacts h'
   \]
 are bijections for all $g\in G$ and $h\in H$ that satisfy properties analogous to~(CP-b) above
 (cf.\ \autoref{ZS-actions}, \autoref{action-injective}, \autoref{action-surjective}, \autoref{action-inverse-2}).
 In this case, it is possible to construct suitable maps $\Theta_{1,2}$ and $\Theta_{2,1}$ from the maps $\leftacts$, $\rightacts$, $\Leftacts$, $\Rightacts$, and one can identify $G\zs H$ with the crossed product $G\zs_\Theta H$; cf.\ \autoref{C:ZS-crossed}.
  
 However this need not be possible in general:
 \autoref{E:NotInjective} constructs a monoid that can be decomposed as the internal \ZS{} product of two cancellative, conical monoids with finitely many atoms that do not admit common multiples, but that cannot be written as a crossed product of these factors.
\end{enumerate}
\medskip

\noindent
While there are connections between the notions of crossed products and internal \ZS{} products, the two notions are not equivalent.
Moreover, there are conceptual differences: Internal \ZS{} products model the decomposition of a given monoid into components, whereas crossed products are a specific construction by which given monoids can be composed into a new monoid.
\end{remark}

\begin{example}\label{E:NotInjective}
%
%
Consider the monoid~$K$ defined by the monoid presentation
\[
  \Big\langle
    a_1,b_1,a_2,b_2
  \;\Big|\,
    a_1 a_2 = a_2 a_1,\, a_1 b_2 = a_2 b_1,\, b_1 a_2 = b_2 a_1,\, b_1 b_2 = b_2 b_1
  \Big\rangle^+
\]
and the submonoids~$G$ and~$H$ of~$K$ generated by $\{a_1,b_1\}$ respectively $\{a_2,b_2\}$.

The relations of~$K$ preserve the string over the alphabet $\{a,b\}$ obtained from a word in the atoms of~$K$ by ignoring generator subscripts, in particular the length of the word, as well as the number of atoms contained in~$G$ respectively~$H$.  Indeed, any two words in the atoms of~$K$ that yield the same string over the alphabet $\{a,b\}$ and involve the the same number of atoms contained in~$G$ (and thus the the same number of atoms contained in~$H$) are equivalent.
Thus, each element~$x$ of~$K$ can be identified with a pair consisting of a word~$w_x$ of length~$\ell_x$ over the alphabet $\{a,b\}$ and an integer $g_x\in\{0,\ldots,\ell_x\}$ giving the number of atoms contained in~$G$; one has $x\in G$ iff $g_x=\ell_x$ and $x\in H$ iff $g_x=0$.
Hence, the number of elements of word length~$\ell$ in~$G$, and also in~$H$, is $2^\ell$, and the number of elements of word length~$\ell$ in~$K$ is $(\ell+1)\cdot2^\ell$.
In particular, the submonoids~$G$ and~$H$ are free of rank~2, and each element of~$K$ admits a unique $GH$-decomposition and a unique $HG$-decomposition,
so one has $K=G\zs H$, where~$G$ and~$H$ are cancellative, conical monoids with finitely many atoms.
Note that neither in~$G$ nor in~$H$ do common multiples of elements exist.

From the commutative diagrams
\begin{equation}\label{eq:NotInjective}
  \raisebox{-6ex}{
  \begin{tikzpicture}[scale=0.8]
   \draw [->] (0,0.95) -- (0,0.05) node [left] at (0,0.5) {$a_1$};
   \draw [->] (0.05,0) -- (0.95,0) node [below] at (0.5,0) {$a_2$};
   \draw [->] (0.05,1) -- (0.95,1) node [above] at (0.5,1) {$a_2$};
   \draw [->] (1,0.95) -- (1,0.05) node [right] at (1,0.5) {$a_1$};
  \end{tikzpicture}
  \quad
  \begin{tikzpicture}[scale=0.8]
   \draw [->] (0,0.95) -- (0,0.05) node [left] at (0,0.5) {$a_1$};
   \draw [->] (0.05,0) -- (0.95,0) node [below] at (0.5,0) {$b_2$};
   \draw [->] (0.05,1) -- (0.95,1) node [above] at (0.5,1) {$a_2$};
   \draw [->] (1,0.95) -- (1,0.05) node [right] at (1,0.5) {$b_1$};
  \end{tikzpicture}
  \quad
  \begin{tikzpicture}[scale=0.8]
   \draw [->] (0,0.95) -- (0,0.05) node [left] at (0,0.5) {$b_1$};
   \draw [->] (0.05,0) -- (0.95,0) node [below] at (0.5,0) {$a_2$};
   \draw [->] (0.05,1) -- (0.95,1) node [above] at (0.5,1) {$b_2$};
   \draw [->] (1,0.95) -- (1,0.05) node [right] at (1,0.5) {$a_1$};
  \end{tikzpicture}
  \quad
  \begin{tikzpicture}[scale=0.8]
   \draw [->] (0,0.95) -- (0,0.05) node [left] at (0,0.5) {$b_1$};
   \draw [->] (0.05,0) -- (0.95,0) node [below] at (0.5,0) {$b_2$};
   \draw [->] (0.05,1) -- (0.95,1) node [above] at (0.5,1) {$b_2$};
   \draw [->] (1,0.95) -- (1,0.05) node [right] at (1,0.5) {$b_1$};
  \end{tikzpicture}
  }
\end{equation}
we see that one has for any $u\in\{a,b\}$, any $g\in\{a_1,b_1\}$, and any $h\in\{a_2,b_2\}$ the following identities:
\[
   u_2 \leftacts g = u_1
   \qquad
   h \rightacts u_1 = u_2
   \qquad
   u_1 \Leftacts h = u_2
   \qquad
   g \Rightacts u_2 = u_1
\]
That is, the maps $g \mapsto u_2 \leftacts g$, $h \mapsto h \rightacts u_1$, $h \mapsto u_1 \Leftacts h$,
and $g \mapsto g \Rightacts u_2$ are not injective in this case.
Moreover, as the commutative diagrams in \eqref{eq:NotInjective} are all the commutative diagrams involving two pairs of atoms, it is not possible to define maps $\Theta_{1,2}$ and $\Theta_{2,1}$ with the properties (CP-a) and (CP-b), so~$K$ cannot be identified with a crossed product of~$G$ and~$H$.
\end{example}

\subsection{Basic properties}\label{S::ActionProperties}

We start by noting some basic properties of the maps $\leftacts$, $\rightacts$, $\Leftacts$, $\Rightacts$.

\begin{lemma} \label{transforming-expressions}
  Consider the set of propositions built out of monoid operations, logical operations,
  quantifiers over $G$, $H$ and $K$, and the
  operations $\leftacts$, $\rightacts$, $\Leftacts$, $\Rightacts$.
  We can define two transformations of this set as follows.
  \begin{itemize}
  \item[$\sigma:$] Swap $G \longleftrightarrow H$, $\leftacts
    \longleftrightarrow \Leftacts$ and $\rightacts \longleftrightarrow
    \Rightacts$.
  \item[$\tau:$] Replace the monoids with their opposites,
    reversing all monoid expressions
    and all triangle operations:
    \begin{itemize}
      \item[$\circ$] $G \longrightarrow G^\op$,
        $H \longrightarrow H^\op$
        and $K\longrightarrow K^\op$.
      \item[$\circ$] $x \cdot y \longrightarrow y^\op \cdot x^\op$.
      \item[$\circ$] $h \leftacts g \longleftrightarrow g^\op \Rightacts h^\op$
        and $h \rightacts g \longleftrightarrow g^\op \Leftacts h^\op$.
    \end{itemize}
  \end{itemize}
  Then for any proposition $E$ we have $E \Longleftrightarrow
  \sigma(E) \Longleftrightarrow \tau(E)$.
\end{lemma}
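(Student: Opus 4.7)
The plan is to prove each equivalence by structural induction on the syntax of the proposition~$E$, reducing to a verification of the base cases (atomic propositions involving monoid multiplication and the four action symbols) plus routine compatibility with logical connectives and quantifiers.

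For $\sigma$, the justification comes from the fact that \autoref{D:ZappaSzepProduct} is manifestly symmetric under swapping $G$ and $H$: the definition requires simultaneously the existence and uniqueness of $GH$- and $HG$-decompositions, so $K = G \zs H$ is equivalent to $K = H \zs G$. Under the formal rename $G \leftrightarrow H$, the equation $hg = (h \leftacts g)(h \rightacts g)$ that defines $\leftacts$ and $\rightacts$ becomes exactly the equation $g h = (g \Leftacts h)(g \Rightacts h)$ that defines $\Leftacts$ and $\Rightacts$ (with the variables renamed). By the uniqueness of the decompositions, this forces $\leftacts \leftrightarrow \Leftacts$ and $\rightacts \leftrightarrow \Rightacts$ under the swap, which is precisely the action of~$\sigma$ on the symbols.

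For $\tau$, I would observe that if $K = G \zs H$, then $K^\op = G^\op \zs H^\op$ (the $GH$-decomposition in~$K$ corresponds to the $HG$-decomposition in~$K^\op$ and vice versa). Starting from the defining equation $hg = (h \leftacts g)(h \rightacts g)$ in~$K$ and reversing the product yields, in~$K^\op$,
\[
   g^\op h^\op = (h \rightacts g)^\op (h \leftacts g)^\op \,.
\]
Since in~$K^\op$ the defining equation of the corresponding action is $g^\op h^\op = (g^\op \Leftacts h^\op)(g^\op \Rightacts h^\op)$, uniqueness gives $g^\op \Leftacts h^\op = (h \rightacts g)^\op$ and $g^\op \Rightacts h^\op = (h \leftacts g)^\op$, which is exactly the symbol translation specified by $\tau$. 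The same reasoning applied to the defining equation of $\Leftacts, \Rightacts$ yields the remaining two translations.

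The induction step is routine: both $\sigma$ and $\tau$ act trivially on logical connectives, and the quantifiers behave correctly because renaming $G \leftrightarrow H$ (for $\sigma$) and the identification of the underlying sets of a monoid and its opposite (for $\tau$) are bijections of the quantifier domains. I expect no real obstacle here; the only point requiring care is to set up notation so that ``truth of an atomic proposition'' is manifestly preserved by the replacement rules of \autoref{D:actions} when reinterpreted via the swap of factors or the passage to opposites.
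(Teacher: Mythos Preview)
Your proposal is correct and follows essentially the same argument as the paper: for $\sigma$ you invoke the symmetry of \autoref{D:ZappaSzepProduct} under swapping $G$ and $H$, for $\tau$ you pass to $K^\op = G^\op \zs H^\op$, reverse the defining equation $hg = (h\leftacts g)(h\rightacts g)$, and compare with the $H^\op G^\op$-decomposition to obtain $(h\rightacts g)^\op = g^\op \Leftacts h^\op$ and $(h\leftacts g)^\op = g^\op \Rightacts h^\op$, then finish by structural induction on the logical structure of $E$. This is exactly what the paper does.
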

%
%
%
\begin{proof}
  The equivalence of $E$ and $\sigma(E)$ is clear; if you swap the
  roles of $G$ and $H$ then you swap the definitions of the triangle
  operations.

  To see that $E$ is equivalent to $\tau(E)$, first observe that $K =
  G \zs H$ if and only if $K^\op = G^\op \zs H^\op$.  The
  anti-isomorphisms between $K$ and $K^\op$ transforms the equality $hg
  = (h \leftacts g)(h\rightacts g)$ to
  \[ g^\op h^\op = (h\rightacts g)^\op(h \leftacts g)^\op, \]
  but taking $H^\op G^\op$-decompositions gives us 
  \[ g^\op h^\op = (g^\op \Leftacts h^\op) (g^\op \Rightacts h^\op). \] 
  The uniqueness of $H^\op G^\op$-decompositions means we have the
  following equalities.
  \begin{align}
    (h\rightacts g)^\op &= (g^\op \Leftacts h^\op) &
    (h \leftacts g)^\op &= (g^\op \Rightacts h^\op) \label{eq:op}
  \end{align}

  If $E$ is simply an equality between two monoid-triangle
  expressions, i.e.\ $E$ is $x = y$, then by \eqref{eq:op} $\tau(x) =
  x^\op$ and $\tau(y) = y^\op$ so the equivalence of $E$ and
  $\tau(E)$ follows from the fact that $\op$ is a bijection.

  Logical conjunction and disjunction and the universal and
  existential quantifiers are unchanged by $\tau$, e.g.\ $\tau(A
  \wedge B) \equiv \tau(A) \wedge \tau(B)$. Therefore, the equivalence
  of $E$ and $\tau(E)$ follows by structural induction.
\end{proof}

\begin{lemma}[{\cite[Lemma 3.2]{Brin05}}] \label{ZS-actions}
  The maps $\leftacts$ and $\Leftacts$ define left actions, and
  $\rightacts$ and $\Rightacts$ define right actions:
  \begin{align*}
    (h_1 h_2) \leftacts g  &= h_1 \leftacts (h_2 \leftacts g) &
    (g_1 g_2) \Leftacts h  &= g_1 \Leftacts (g_2 \Leftacts h) \\
    h \rightacts (g_1 g_2) &= (h \rightacts g_1) \rightacts g_2 &
    g \Rightacts (h_1 h_2) &= (g \Rightacts h_1) \Rightacts h_2 
  \end{align*} 
  Moreover, the actions act on products as follows:
  \begin{align}\label{action-on-products}
      h \leftacts (g_1 g_2)
        &= (h \leftacts g_1) ((h \rightacts g_1) \leftacts g_2) \nonumber \\
      g \Leftacts (h_1 h_2) 
        &= (g \Leftacts h_1) ((g \Rightacts h_1) \Leftacts h_2) \\
      (h_1 h_2) \rightacts g 
        &= (h_1 \rightacts (h_2 \leftacts g)) (h_2 \rightacts g) \nonumber \\
      (g_1 g_2) \Rightacts h 
        &= (g_1 \Rightacts (g_2 \Leftacts h)) (g_2 \Rightacts h) \nonumber
  \end{align}
\end{lemma}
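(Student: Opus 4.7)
The plan is to prove just two of the eight identities directly, and to obtain the remaining six for free by applying the symmetry $\sigma$ of \autoref{transforming-expressions}. The direct arguments rest on computing one product in $K$ in two different ways and invoking the uniqueness of the $GH$-decomposition.

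First I would compute the $GH$-decomposition of $(h_1 h_2) g$ in two ways. By \autoref{D:actions} applied directly,
\[
  (h_1 h_2) g = \big((h_1 h_2) \leftacts g\big)\big((h_1 h_2) \rightacts g\big).
\]
On the other hand, by associativity in $K$ and two successive applications of \autoref{D:actions},
\[
  (h_1 h_2) g = h_1(h_2 g) = h_1 (h_2 \leftacts g)(h_2 \rightacts g) = \big(h_1 \leftacts (h_2 \leftacts g)\big)\big(h_1 \rightacts (h_2 \leftacts g)\big)(h_2 \rightacts g).
\]
The first factor lies in $G$ and the remaining two factors lie in $H$, so the right-hand side is a bona fide $GH$-decomposition. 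Uniqueness immediately yields the left-action property $(h_1 h_2) \leftacts g = h_1 \leftacts (h_2 \leftacts g)$ together with one of the product formulas, namely $(h_1 h_2) \rightacts g = \big(h_1 \rightacts (h_2 \leftacts g)\big)(h_2 \rightacts g)$.

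An entirely analogous computation applied to $h(g_1 g_2)$, grouping it as $(h g_1) g_2$ and then rewriting the $HG$-product $h g_1$ using \autoref{D:actions}, produces simultaneously the right-action property $h \rightacts (g_1 g_2) = (h \rightacts g_1) \rightacts g_2$ and the product formula $h \leftacts (g_1 g_2) = (h \leftacts g_1)\big((h \rightacts g_1) \leftacts g_2\big)$.

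This settles two out of the four action identities and two out of the four product formulas. To finish, I would invoke the $\sigma$ transformation of \autoref{transforming-expressions}, which interchanges the roles of $G$ and $H$ and correspondingly swaps $\leftacts \leftrightarrow \Leftacts$ and $\rightacts \leftrightarrow \Rightacts$; applying $\sigma$ to each of the four identities already proved yields the four remaining ones. I do not expect any substantial obstacle: the only thing to check carefully is that in each two-way computation the resulting factorisation really is a $GH$-decomposition (i.e.\ the first factor is in $G$ and the remaining product lies in $H$), but this is immediate from the codomains stipulated in \autoref{D:actions} together with the fact that $G$ and $H$ are submonoids and hence closed under multiplication.
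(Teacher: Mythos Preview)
Your argument is correct and is essentially the standard proof: rewrite $(h_1h_2)g$ and $h(g_1g_2)$ using associativity, read off the identities from uniqueness of $GH$-decompositions, and then apply the $\sigma$-symmetry of \autoref{transforming-expressions} to obtain the $\Leftacts/\Rightacts$ versions. The paper itself does not give a proof of this lemma; it simply cites \cite[Lemma~3.2]{Brin05}, whose argument is exactly the computation you describe.
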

%
%

\begin{lemma}[{\cite[Corollary 3.3.1]{Brin05}}]
  The identity elements of the submonoids act trivially.
  \begin{align*}
    \id \leftacts g &= g   & h \rightacts \id &= h &
    \id \Leftacts h &= h   & g \Rightacts \id &= g 
  \end{align*}
\end{lemma}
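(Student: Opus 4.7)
The plan is to derive all four identities directly from the defining relation
\[
   hg = (h \leftacts g)(h \rightacts g)
\]
by specialising one variable to the identity and then invoking uniqueness of $GH$-decompositions, with \autoref{transforming-expressions} handling the other two identities.

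First I would substitute $h = \id$. Since $\id \in H$ and $g \in G$, the relation gives the expression $g = (\id \leftacts g)(\id \rightacts g)$ with $(\id \leftacts g) \in G$ and $(\id \rightacts g) \in H$. But $g = g \cdot \id$ is already a $GH$-decomposition of $g$, so by the uniqueness clause in \autoref{D:ZappaSzepProduct} we must have $\id \leftacts g = g$ (and $\id \rightacts g = \id$ as a by-product). Next I would substitute $g = \id$ into the same relation: we obtain $h = (h \leftacts \id)(h \rightacts \id)$, and comparing with the $GH$-decomposition $h = \id \cdot h$ yields $h \rightacts \id = h$ (and $h \leftacts \id = \id$).

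For the remaining two identities $\id \Leftacts h = h$ and $g \Rightacts \id = g$, rather than repeating the argument with the analogous relation $gh = (g \Leftacts h)(g \Rightacts h)$, I would simply apply the transformation $\sigma$ of \autoref{transforming-expressions} to the two identities already established: $\sigma$ swaps $G \leftrightarrow H$, $\leftacts \leftrightarrow \Leftacts$ and $\rightacts \leftrightarrow \Rightacts$, so $\sigma(\id \leftacts g = g)$ is precisely $\id \Leftacts h = h$ and $\sigma(h \rightacts \id = h)$ is precisely $g \Rightacts \id = g$.

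There is no real obstacle here; the only subtlety is recognising that both equations $g = (\id \leftacts g)(\id \rightacts g)$ and $g = g \cdot \id$ are valid $GH$-decompositions of the same element~$g$, so that uniqueness can be applied to split the equation into the two component equalities in $G$ and in $H$ separately.
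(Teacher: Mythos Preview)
Your argument is correct: specialising the defining relation $hg = (h\leftacts g)(h\rightacts g)$ at $h=\id$ and at $g=\id$, together with uniqueness of $GH$-decompositions, yields the first two identities, and $\sigma$ from \autoref{transforming-expressions} gives the remaining two. The paper does not supply its own proof here but simply cites \cite[Corollary~3.3.1]{Brin05}; the argument there is essentially the one you give.
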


\begin{lemma} \label{action-on-id}
  For all $g \in G$, $h \in H$ we have the following logical
  equivalences.
  \begin{align*}
    g = \id &\Longleftrightarrow h \leftacts  g = \id  &
    h = \id &\Longleftrightarrow h \rightacts g = \id  \\
    h = \id &\Longleftrightarrow g \Leftacts  h = \id  &
    g = \id &\Longleftrightarrow g \Rightacts h = \id 
  \end{align*}
\end{lemma}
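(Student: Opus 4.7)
The plan is to prove the first equivalence $g = \id \Longleftrightarrow h \leftacts g = \id$ directly using the uniqueness of decompositions, and then deduce the remaining three equivalences by applying the symmetries $\sigma$ and $\tau$ from \autoref{transforming-expressions}.

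For the forward implication of the first equivalence, I would observe that if $g = \id$, then $hg = h = \id \cdot h$, where $\id \in G$ and $h \in H$; this is a $GH$-decomposition of $hg$. On the other hand, by \autoref{D:actions}, $hg = (h \leftacts g)(h \rightacts g)$ is also a $GH$-decomposition. By the uniqueness assumed in \autoref{D:ZappaSzepProduct}, we conclude $h \leftacts g = \id$.

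For the reverse implication, suppose $h \leftacts g = \id$. Then $hg = (h \leftacts g)(h \rightacts g) = h \rightacts g \in H$. Now consider $HG$-decompositions: on the one hand, $hg = h \cdot g$ with $h \in H$ and $g \in G$; on the other hand, since $hg \in H$, we also have $hg = (hg) \cdot \id$ with $hg \in H$ and $\id \in G$. Uniqueness of the $HG$-decomposition forces $g = \id$. The key technical point here is simply to recognise that an element of $H$ admits the trivial $HG$-decomposition, which is essentially the only subtle step in the entire lemma.

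The remaining three equivalences follow immediately by applying the transformation $\sigma$ of \autoref{transforming-expressions}, which swaps $G \leftrightarrow H$ and $\leftacts \leftrightarrow \Leftacts$, $\rightacts \leftrightarrow \Rightacts$, together with the analogous argument in the other handedness. Concretely, $\sigma$ turns $g = \id \Leftrightarrow h \leftacts g = \id$ into $h = \id \Leftrightarrow g \Leftacts h = \id$, while running the same two-step argument above with the roles of the decompositions exchanged (or equivalently applying $\tau$ to pass to the opposite monoid and using the identity $(h \leftacts g)^\op = g^\op \Rightacts h^\op$ from \eqref{eq:op}) yields the equivalences involving $\rightacts$ and $\Rightacts$. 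I do not foresee any real obstacle; the only thing to be careful about is systematically invoking uniqueness on the correct side ($GH$ versus $HG$) in each case.
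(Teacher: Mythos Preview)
Your proof is correct and follows essentially the same approach as the paper: the first equivalence is established via uniqueness of the $GH$-decomposition for the forward direction and uniqueness of the $HG$-decomposition for the reverse, and the remaining three equivalences are obtained by invoking the symmetries of \autoref{transforming-expressions}. The paper's write-up is terser, but the argument is the same.
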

\begin{proof}
  Consider the equation $hg = (h \leftacts g)(h \rightacts g)$.  If $g
  = \id$ then this is an element of $H$, so by the uniqueness of
  $GH$-decompositions $h \leftacts g = \id$.  Similarly, if $h
  \leftacts g = \id$ then this is also an element of $H$, so by the
  uniqueness of $HG$-decompositions $g = \id$.

  The remaining equivalences follow by \autoref{transforming-expressions}.
\end{proof}

\begin{lemma} \label{left-right-identity}
  For all $g \in G$ and $h \in H$:
  \begin{align*}
    (g \Leftacts h) \leftacts (g \Rightacts h)  &= g &
    (g \Leftacts h) \rightacts (g \Rightacts h) &= h \\ 
    (h \leftacts g) \Leftacts (h \rightacts g)  &= h &
    (h \leftacts g) \Rightacts (h \rightacts g) &= g  
  \end{align*}
\end{lemma}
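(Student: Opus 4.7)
The plan is to exploit the uniqueness of $GH$-\ and $HG$-decompositions in $K$ together with the defining equations of the four triangle operations. Fix $g\in G$ and $h\in H$, and set $k = gh \in K$. By definition of $\Leftacts$ and $\Rightacts$, the $HG$-decomposition of $k$ is
\[
  k = gh = (g \Leftacts h)(g \Rightacts h),
\]
so writing $h' := g\Leftacts h \in H$ and $g' := g \Rightacts h \in G$, we have $k = h'g'$.

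Now apply the definition of $\leftacts$ and $\rightacts$ to the pair $(h',g')$: by construction,
\[
  h' g' = (h' \leftacts g')(h' \rightacts g'),
\]
and the right-hand side is, again by definition, the $GH$-decomposition of $k = h'g'$. But $k=gh$ is already a $GH$-decomposition of the same element. Uniqueness of $GH$-decompositions then forces $h' \leftacts g' = g$ and $h' \rightacts g' = h$, which on substituting back the definitions of $h'$ and $g'$ gives the first two claimed identities
\[
  (g \Leftacts h) \leftacts (g \Rightacts h) = g, \qquad (g \Leftacts h) \rightacts (g \Rightacts h) = h.
\]

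For the remaining two identities, the cleanest route is simply to apply the swap transformation $\sigma$ of \autoref{transforming-expressions} to the two identities already proved; $\sigma$ exchanges $G\leftrightarrow H$, $\leftacts\leftrightarrow\Leftacts$ and $\rightacts\leftrightarrow\Rightacts$, and turns the two identities above into exactly $(h \leftacts g) \Leftacts (h \rightacts g) = h$ and $(h \leftacts g) \Rightacts (h \rightacts g) = g$. Alternatively, one can repeat the argument starting from the $GH$-decomposition $hg = (h\leftacts g)(h\rightacts g)$ and applying the definition of $\Leftacts,\Rightacts$ to $(h\leftacts g, h\rightacts g)$, using uniqueness of $HG$-decompositions. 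There is no real obstacle here — the whole argument is a two-step application of the definitions paired with the uniqueness clause of \autoref{D:ZappaSzepProduct}; the only thing to be careful about is keeping the two decomposition orders straight when comparing the two expressions for $k$.
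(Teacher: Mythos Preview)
Your proof is correct and follows essentially the same approach as the paper: convert $gh$ to its $HG$-decomposition, convert that back to a $GH$-decomposition, and invoke uniqueness. The paper handles the remaining two identities by saying they ``can be shown in the same way,'' which matches your alternative route; your use of the swap $\sigma$ from \autoref{transforming-expressions} is an equally valid shortcut.
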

\begin{proof}
  Rewriting a $GH$-decomposition as a $HG$- and then back as a
  $GH$-decom\-pos\-ition, we have
  \begin{align*}
    g h &= (g \Leftacts h) (g \Rightacts h) \\
        &= \left((g \Leftacts h) \leftacts (g \Rightacts h)\right)
           \left((g \Leftacts h) \rightacts (g \Rightacts h)\right)
    \;.
  \end{align*}
  
  \hfill
    \begin{tikzpicture}[scale=3.0]
      \draw [->] (0,0.95) -- (0,0.05) node [right] at (0,0.5) {$g$}
                                      node [left] at (0,0.5) {$(g\Leftacts h)\leftacts(g\Rightacts h)$};
      \draw [->] (0.05,0) -- (0.95,0) node [above] at (0.5,0) {$h$}
                                      node [below] at (0.5,0) {$(g\Leftacts h)\rightacts(g\Rightacts h)$};
      \draw [->] (0.05,1) -- (0.95,1) node [above] at (0.5,1) {$g\Leftacts h$};
      \draw [->] (1,0.95) -- (1,0.05) node [right] at (1,0.5) {$g\Rightacts h$};
    \end{tikzpicture}
  \hfill\,
  
  By the uniqueness of $GH$-decompositions we can the deduce the
  first two equations.  The second two can be shown in the same way.
\end{proof}

\subsection{Actions and the monoid structure}\label{S:ActionMonoid}

\begin{lemma} \label{ZS-factorial}
  Suppose that $K = G \zs H$ and that $H$ is conical.  Then, for all
  $x,y \in K$, $xy \in G$ implies that $x \in G$ and $y \in G$.
\end{lemma}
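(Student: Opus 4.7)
The plan is to write $x$ and $y$ in their $GH$-decompositions, assemble the $GH$-decomposition of $xy$ using the actions, and then exploit uniqueness together with conicity of $H$.

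More concretely, let $x = g_1 h_1$ and $y = g_2 h_2$ be the $GH$-decompositions of $x$ and $y$, with $g_1,g_2 \in G$ and $h_1,h_2 \in H$. Using \autoref{D:actions} to rewrite the middle product $h_1 g_2$ as $(h_1 \leftacts g_2)(h_1 \rightacts g_2)$, we obtain
\[
  xy = g_1 h_1 g_2 h_2 = \bigl(g_1 \,(h_1 \leftacts g_2)\bigr)\,\bigl((h_1 \rightacts g_2)\, h_2\bigr),
\]
which is a $GH$-decomposition of $xy$ because the first bracket lies in $G$ and the second in $H$.

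Now suppose $xy \in G$. Then $xy = xy \cdot \id$ is also a $GH$-decomposition of $xy$, so by uniqueness we must have $(h_1 \rightacts g_2)\, h_2 = \id$. Since $H$ is conical and both factors lie in $H$, this forces $h_1 \rightacts g_2 = \id$ and $h_2 = \id$. From $h_2 = \id$ we conclude $y = g_2 \in G$. From $h_1 \rightacts g_2 = \id$ together with \autoref{action-on-id}, we conclude $h_1 = \id$ and therefore $x = g_1 \in G$. This completes the argument.

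There is no real obstacle here: the whole proof is a one-step uniqueness argument once the two $GH$-decompositions are written down, and conicity of $H$ together with \autoref{action-on-id} does the rest. The only thing to be mindful of is that we use the $GH$-decomposition (not the $HG$-decomposition) so that the $\leftacts$/$\rightacts$ actions appear in the correct position; the symmetric statement for $xy \in H$ would follow by applying $\sigma$ from \autoref{transforming-expressions}, though it is not what is claimed here.
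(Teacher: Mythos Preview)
Your proof is correct and essentially identical to the paper's own argument: write $x$ and $y$ in their $GH$-decompositions, assemble the $GH$-decomposition of $xy$, then use uniqueness (since $xy\in G$ has $H$-part $\id$) together with conicity of $H$ and \autoref{action-on-id}. Only the variable names differ.
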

\begin{proof}
  Let $g = xy \in G$. Suppose that we have the following
  $GH$-decompositions of~$x$ and~$y$:
  \begin{align*}
    x &= g_x h_x &
    y &= g_y h_y
  \end{align*}
  Now
  \[ g = xy = g_x h_x g_y h_y 
       = g_x (h_x \leftacts g_y) (h_x \rightacts g_y) h_y \,, \]
  whence by the uniqueness of the $GH$-decomposition of $g$, we have the
  following:
  \begin{align*}
    g &= g_x (h_x \leftacts g_y) &
    \id &= (h_x \rightacts g_y) h_y
  \end{align*}
  So, as $H$ is conical, $h_x \rightacts g_y = \id = h_y$ and so, by
  \autoref{action-on-id}, $h_x = \id$.  Hence $x = g_x \in G$ and $y =
  g_y \in G$.
\end{proof}

\begin{lemma} \label{action-on-atoms}
  Suppose that $K = G \zs H$ and that $H$ is conical.  For all $h \in H$ we have that if $a \in G$
  is an atom then $h \leftacts a$ is an atom.
\end{lemma}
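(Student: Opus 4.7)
The plan is to start by assuming a factorisation $h \leftacts a = xy$ in~$K$ and reduce it, via results already established, to a factorisation of~$a$ itself. Since~$a$ is an atom of~$G$ (and so in particular an atom of~$K$), this forces one of the factors to be trivial.

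First I would apply \autoref{ZS-factorial}: since $h \leftacts a$ lies in~$G$ and~$H$ is conical, any product decomposition $h \leftacts a = xy$ in~$K$ must have both $x, y \in G$. This lets us work entirely inside the actions of~$G$ and~$H$ on each other without having to worry about mixed factors. Also, $h \leftacts a \neq \id$ follows immediately from \autoref{action-on-id} together with $a \neq \id$, so $h \leftacts a$ is at least a candidate for an atom.

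Next, I would invoke \autoref{left-right-identity} to recover~$a$ from $h \leftacts a$, namely $a = (h \leftacts a) \Rightacts (h \rightacts a)$. Writing $h \leftacts a = xy$ and expanding by the product rule from \autoref{ZS-actions} gives
\[
  a = (xy) \Rightacts (h \rightacts a)
    = \bigl(x \Rightacts (y \Leftacts (h \rightacts a))\bigr)\,\bigl(y \Rightacts (h \rightacts a)\bigr).
\]
Since~$a$ is an atom of~$G$, one of the two factors on the right must equal $\id$. By \autoref{action-on-id}, the vanishing of $x \Rightacts (y \Leftacts (h \rightacts a))$ forces $x = \id$, and the vanishing of $y \Rightacts (h \rightacts a)$ forces $y = \id$.

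No step here looks especially delicate: the content is essentially an application of the product formula for $\Rightacts$ combined with the cancellation lemmas \autoref{ZS-factorial} and \autoref{action-on-id}. The only point worth a moment's care is the reduction from an arbitrary $K$-factorisation to a $G$-factorisation, which is precisely the role played by the conicality of~$H$ via \autoref{ZS-factorial}; without that hypothesis the argument would not get off the ground.
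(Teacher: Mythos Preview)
Your proof is correct and follows essentially the same route as the paper: reduce to $x,y\in G$ via \autoref{ZS-factorial}, recover $a = (xy)\Rightacts(h\rightacts a)$ (the paper derives this from the equation $ha = xy\,(h\rightacts a)$ rather than citing \autoref{left-right-identity} by name, but it is the same identity), expand with the product formula from \autoref{ZS-actions}, and finish with \autoref{action-on-id}. Your added remark that $h\leftacts a \neq \id$ is a harmless clarification.
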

\begin{proof}
  Suppose that $h \leftacts a = x y$, that is, $ha = xy h'$
  where $h' = h \rightacts a$.  By \autoref{ZS-factorial}, $x, y \in
  G$, so we may apply \autoref{ZS-actions} to the action of $h'$ on $xy$, which gives us
  \[ a = (xy) \Rightacts h' 
       = (x \Rightacts (y \Leftacts h')) (y \Rightacts h') \; .  \]
  Now if $a$ is an atom, we have that either $(x \Rightacts (y
  \Leftacts h')) = \id$ or $(y \Rightacts h') = \id$. So,
  by \autoref{action-on-id}, either $x = \id$ or $y = \id$ holds.
\end{proof}

\begin{lemma}
  If $h \rightacts g = h$ then one has $h \leftacts (g^\ell) = (h \leftacts g)^\ell$ for all $\ell\in\NN$.
\end{lemma}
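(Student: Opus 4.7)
The plan is to proceed by induction on $\ell$. For the base case $\ell = 0$, we need $h \leftacts \id = \id$. This follows from uniqueness of the $GH$-decomposition: the product $h \cdot \id = h$ is already in $GH$-decomposed form as $\id \cdot h$, so comparing with $(h \leftacts \id)(h \rightacts \id)$ gives $h \leftacts \id = \id$ (this is also immediate from \autoref{action-on-id} since $\id = \id$).

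For the inductive step, suppose the identity holds at some $\ell \geq 0$. Write $g^{\ell+1} = g \cdot g^{\ell}$ and apply the action-on-products formula from \autoref{ZS-actions}:
\[
  h \leftacts g^{\ell+1} = h \leftacts (g \cdot g^{\ell})
    = (h \leftacts g)\bigl((h \rightacts g) \leftacts g^{\ell}\bigr).
\]
Using the hypothesis $h \rightacts g = h$, the second factor simplifies to $h \leftacts g^{\ell}$, which by the inductive hypothesis equals $(h \leftacts g)^{\ell}$. Therefore
\[
  h \leftacts g^{\ell+1} = (h \leftacts g)(h \leftacts g)^{\ell} = (h \leftacts g)^{\ell+1},
\]
completing the induction.

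There is no real obstacle: the only subtlety is recognising that the hypothesis $h \rightacts g = h$ is precisely what is needed to collapse the intermediate factor $(h \rightacts g) \leftacts g^{\ell}$ back into the shape required to invoke the inductive hypothesis. One could equivalently split $g^{\ell+1} = g^{\ell} \cdot g$ and first observe by the right-action property that $h \rightacts g^{\ell} = h$ for all $\ell$, but the argument above is shorter.
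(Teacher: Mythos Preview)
Your proof is correct and follows essentially the same approach as the paper: induction on $\ell$, splitting off one factor of $g$ and applying the action-on-products identity from \autoref{ZS-actions}, then using the hypothesis $h \rightacts g = h$ to reduce to the inductive hypothesis. The only difference is cosmetic---you index the step as $\ell \to \ell+1$ and spell out the base case explicitly, whereas the paper writes it as $\ell-1 \to \ell$ and leaves the base case implicit.
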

\begin{proof}
Using induction on $\ell$, we obtain
  \begin{align*}
    h \leftacts g^\ell &= h \leftacts (g g^{\ell-1})
     = (h \leftacts g)((h \rightacts g) \leftacts g^{\ell-1}) \\
    &= (h \leftacts g)(h \leftacts g^{\ell-1})
     = (h \leftacts g)^\ell  \; .
  \end{align*}
\end{proof}

\begin{lemma} \label{action-injective}
  Suppose that $K = G \zs H$, that $G$ is left-cancellative and that
  common multiples with respect to the prefix order exist in $G$ for
  every pair of elements.  Then $\leftacts$ acts by injections.
\end{lemma}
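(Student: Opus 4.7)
The plan is to use the existence of common multiples in $G$ to transform the hypothesis $h \leftacts g_1 = h \leftacts g_2$ into an equation between $HG$-decompositions, from which $g_1 = g_2$ will follow by uniqueness.

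In detail: suppose $h \leftacts g_1 = h \leftacts g_2$, write $g' := h\leftacts g_1 = h\leftacts g_2$, and abbreviate $h_i' := h\rightacts g_i$, so that $hg_i = g' h_i'$. Let $c\in G$ be a common prefix-multiple of $g_1$ and $g_2$, so $c = g_1 u_1 = g_2 u_2$ for some $u_1,u_2 \in G$. First I would apply the two identities $h\leftacts(g_i u_i) = (h\leftacts g_i)\bigl((h\rightacts g_i)\leftacts u_i\bigr)$ and $h\rightacts(g_i u_i) = (h\rightacts g_i)\rightacts u_i$ from \autoref{ZS-actions} to read off the $GH$-decomposition of $hc$ in two ways. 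Comparing by uniqueness of $GH$-decompositions gives
\begin{align*}
  g'(h_1'\leftacts u_1) &= g'(h_2'\leftacts u_2), &
  h_1'\rightacts u_1   &= h_2'\rightacts u_2 \;.
\end{align*}

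Next, I would left-cancel $g'$ in $G$ (permitted by the left-cancellativity hypothesis) to obtain $h_1'\leftacts u_1 = h_2'\leftacts u_2$, and combine this with the second equation above to deduce
\[
  h_1' u_1 \;=\; (h_1'\leftacts u_1)(h_1'\rightacts u_1) \;=\; (h_2'\leftacts u_2)(h_2'\rightacts u_2) \;=\; h_2' u_2
\]
as elements of $K$. Since each side is already presented as an element of $H$ times an element of $G$, both expressions \emph{are} $HG$-decompositions, so uniqueness yields $h_1' = h_2'$ and $u_1 = u_2$. In particular $h\rightacts g_1 = h\rightacts g_2$.

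Finally, combining $h\leftacts g_1 = h\leftacts g_2$ with $h\rightacts g_1 = h\rightacts g_2$ gives $hg_1 = g'h_1' = g'h_2' = hg_2$. Now the $HG$-decomposition of $hg_i$ is simply $h\cdot g_i$ (since $h\in H$ and $g_i\in G$), so a second application of uniqueness of $HG$-decompositions yields $g_1 = g_2$, as required. The only non-routine ingredient is the initial trick of using a common multiple to pass from the $\leftacts$-equation (which only sees the $G$-part of $hg_i$) to an equation in $K$ that also captures the $H$-part; once that step is set up, the rest is two applications of uniqueness with left-cancellation of $g'$ in between, and I do not expect any subtlety beyond keeping track of which decomposition is being used where.
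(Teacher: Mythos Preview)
Your argument is correct and is essentially identical to the paper's own proof: both take a common prefix-multiple of $g_1$ and $g_2$ in $G$, compare the two resulting $GH$-decompositions of $hc$ to obtain (after left-cancelling $g'$) an equality $h_1'u_1 = h_2'u_2$ of $HG$-decompositions, conclude $h_1' = h_2'$, and then use uniqueness of $HG$-decompositions on $hg_1 = hg_2$. Only the notation differs.
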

\begin{proof}
  Suppose that $h \leftacts g_1 = h \leftacts g_2$; we have to show
  that $g_1 = g_2$.  Let $g = h \leftacts g_1$.  There exist $h_1, h_2 \in H$ such
  that
  \begin{align}
    h g_1 &= g h_1 &
    h g_2 &= g h_2. \label{eq:injective}
  \end{align}
  Let $g_1 \bar g_1 = g_2 \bar g_2$ be a common multiple of $g_1$ and
  $g_2$ in $G$.  We have
  \begin{align*}
    h g_1 \bar g_1 = g h_1 \bar g_1 
      &= g(h_1 \leftacts \bar g_1) (h_1 \rightacts \bar g_1) \\
    h g_2 \bar g_2 = g h_2 \bar g_2 
      &= g(h_2 \leftacts \bar g_2) (h_2 \rightacts \bar g_2) \,,
  \end{align*}
  so uniqueness of $GH$-decompositions and the left-cancellativity of
  $G$ give us the following:
  \begin{align*}
      h_1 \leftacts \bar g_1 &= h_2 \leftacts \bar g_2 &
      h_1 \rightacts \bar g_1 &= h_2 \rightacts \bar g_2
  \end{align*}
  So $h_1 \bar g_1 = h_2 \bar g_2$, and uniqueness of
  $HG$-decompositions then yields $h_1 = h_2$.  Substituting this
  into~\eqref{eq:injective} gives $h g_1 = h g_2$, and using the uniqueness
  of $HG$-decompositions again, we obtain $g_1 = g_2$.
\end{proof}

\begin{remark}
 \autoref{E:NotInjective} shows that the condition that common multiples with respect to the prefix order exist in $G$ cannot be dropped.
\end{remark}

\begin{lemma} \label{action-surjective}
  Suppose that $K = G \zs H$, that $G$ is atomic and that $\leftacts$
  acts surjectively on the set of atoms.  Then $\leftacts$ acts
  surjectively on the whole of $G$.
\end{lemma}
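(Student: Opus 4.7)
The plan is to induct on the atom length $\|g\|_{\Atoms}$ of the target element $g \in G$, bootstrapping from the hypothesis about atoms using the product formula in~\autoref{ZS-actions}.

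For the base case $\|g\|_{\Atoms} = 0$, we have $g = \id$, and $h \leftacts \id = \id$ for every $h \in H$ by the observation that identity elements act trivially. For the inductive step, suppose the statement holds for every $g'' \in G$ with $\|g''\|_{\Atoms} < n$ and every $h'' \in H$; note that we must allow the acting element to vary in the induction hypothesis, since the $H$-component will change when we peel off a single atom. Now let $g \in G$ with $\|g\|_{\Atoms} = n \geq 1$ and fix $h \in H$. Since $G$ is atomic, we may factor $g = a g''$ with $a \in \Atoms$ and $g'' \in G$; prepending $a$ to any atom decomposition of $g''$ yields one of $g$, so $\|g''\|_{\Atoms} \leq n - 1$.

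By the surjectivity hypothesis on atoms (combined with~\autoref{action-on-atoms}, which guarantees that preimages may be sought among atoms), there is an atom $a' \in \Atoms$ with $h \leftacts a' = a$. Set $h' = h \rightacts a' \in H$. By the induction hypothesis applied to $h'$ and $g''$, there exists $g' \in G$ with $h' \leftacts g' = g''$. Then the product formula from~\autoref{ZS-actions} gives
\[
  h \leftacts (a' g') = (h \leftacts a') \bigl((h \rightacts a') \leftacts g'\bigr) = a \cdot (h' \leftacts g') = a g'' = g,
\]
which closes the induction.

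There is no real obstacle here: the argument is an entirely routine induction, and its only non-formal ingredient is the interaction between $\leftacts$ and multiplication in the second argument, which has already been recorded in~\autoref{ZS-actions}. The one point worth being careful about is ensuring that the inductive hypothesis is stated uniformly over all $h'' \in H$, so that the statement can be applied to the shifted element $h' = h \rightacts a'$ rather than to $h$ itself.
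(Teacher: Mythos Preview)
Your proof is correct and follows essentially the same route as the paper's: induction on $\|g\|_{\Atoms}$, peeling off a leading atom, lifting it via the surjectivity hypothesis, and then applying the product formula from \autoref{ZS-actions} to the remainder. Your parenthetical appeal to \autoref{action-on-atoms} is unnecessary (and that lemma concerns images rather than preimages of atoms, and has an extra conicality hypothesis), since the hypothesis ``$\leftacts$ acts surjectively on the set of atoms'' already furnishes an atom preimage directly; otherwise the arguments are identical.
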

\begin{proof}
  We need to show that given any $h \in H$ and $g \in G$ there exists
  $g' \in G$ such that $h \leftacts g' = g$.  As $G$ is atomic we may
  proceed by induction on the length of the longest decomposition of
  $g$ as a product of atoms.

  Suppose that $g = a g_1$ where $a$ is an atom of $G$.  As
  $\leftacts$ acts surjectively on the set of atoms, there exists an
  atom $b \in G$ such that $h \leftacts b = a$.  The longest atomic
  decomposition of $g_1$ must be at least one shorter than that for
  $g$, so by induction there exists $g_1' \in G$ such that
  \[ (h \rightacts b) \leftacts g_1' = g_1. \]
  Now
  \[
    h \leftacts (b g_1') = (h \leftacts b) ((h \rightacts b) \leftacts g_1')
                         = a g_1 = g.
  \]
  So $g' = b g_1'$ is the required element of $G$.
\end{proof}

\begin{lemma}[{cf.\ \cite[Lemma 3.12 (viii)]{Brin05}}] \label{injective-cancellative}
  Suppose that $K = G \zs H$, that $G$ and $H$ are left-cancellative
  and that $\leftacts$ acts by injections.  Then $K$ is
  left-cancellative.
\end{lemma}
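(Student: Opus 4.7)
The plan is to reduce left-cancellation in $K$ to left-cancellation in the two factors via the uniqueness of $GH$-decompositions, using the hypothesis on $\leftacts$ at exactly one step.

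Suppose that $kx = ky$ in $K$ and write the $GH$-decompositions $k = g_0 h_0$, $x = g_x h_x$ and $y = g_y h_y$. First I would push each of $kx$ and $ky$ into $GH$-form by using $\rightacts$ and $\leftacts$ to transport $h_0$ past the subsequent $G$-letter:
\begin{align*}
  kx &= g_0 h_0 g_x h_x
     = \bigl(g_0 (h_0 \leftacts g_x)\bigr)\bigl((h_0 \rightacts g_x) h_x\bigr), \\
  ky &= g_0 h_0 g_y h_y
     = \bigl(g_0 (h_0 \leftacts g_y)\bigr)\bigl((h_0 \rightacts g_y) h_y\bigr).
\end{align*}
Since $g_0(h_0 \leftacts g_\bullet) \in G$ and $(h_0 \rightacts g_\bullet) h_\bullet \in H$, these are genuine $GH$-decompositions.

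The uniqueness of $GH$-decompositions in $K$ then forces the two $G$-parts to agree and the two $H$-parts to agree. From $g_0(h_0 \leftacts g_x) = g_0(h_0 \leftacts g_y)$ and left-cancellativity of $G$, I get $h_0 \leftacts g_x = h_0 \leftacts g_y$. This is exactly the place where the hypothesis that $\leftacts$ acts by injections is used: it yields $g_x = g_y$. Substituting back into the equality of $H$-parts, $(h_0 \rightacts g_x) h_x = (h_0 \rightacts g_x) h_y$, and left-cancellativity of $H$ gives $h_x = h_y$. Hence $x = g_x h_x = g_y h_y = y$.

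There is no real obstacle here; the only subtle point is verifying that the rearranged expressions really are $GH$-decompositions (so that uniqueness applies), which is immediate from $G$ and $H$ being submonoids closed under multiplication. The proof uses each hypothesis exactly once: left-cancellativity of $G$ to strip $g_0$, injectivity of $\leftacts$ to strip $h_0 \leftacts (-)$, and left-cancellativity of $H$ to strip $h_0 \rightacts g_x$.
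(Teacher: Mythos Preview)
Your proof is correct and essentially identical to the paper's own argument: both rewrite $kx$ and $ky$ into $GH$-form, invoke uniqueness of $GH$-decompositions to match the $G$- and $H$-parts, then use left-cancellativity of~$G$, injectivity of~$\leftacts$, and left-cancellativity of~$H$ in exactly that order. Only the variable names differ.
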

\begin{proof}
  Suppose we have $x,y_1,y_2 \in K$ such that $x y_1 = x y_2$.  We
  have $GH$-decompositions $x = g_x h_x$, $y_1 = g_1 h_1$ and $y_2 =
  g_2 h_2$.  As
  \begin{align*}
    g_x \underbracket{h_x g_1} h_1 &= 
      g_x (h_x \leftacts g_1) (h_x \rightacts g_1) h_1 \\
    g_x \underbracket{h_x g_2} h_2 &= 
      g_x (h_x \leftacts g_2) (h_x \rightacts g_2) h_2 \,,
  \end{align*}
  uniqueness of $GH$-decompositions implies
  \begin{align*}
    g_x (h_x \leftacts g_1) &= g_x (h_x \leftacts g_2) \\
    (h_x \rightacts g_1) h_1 &= (h_x \rightacts g_2) h_2 \;.
  \end{align*}
  Left-cancellativity of $G$ implies that $h_x \leftacts g_1 =
  h_x \leftacts g_2$.  So, as $\leftacts$ acts injectively $g_1 =
  g_2$.  Left-cancellativity of $H$ then implies that $h_1 = h_2$,
  whence $y_1 = y_2$.
\end{proof}

\subsection{Submonoids acting by bijections}\label{S:ActionByBijections}

We will see in \autoref{S:ActionsForGarsideMonoids} that in the situations we are interested in the submonoids act on each other by bijections.
We analyse this special case in the remainder of this section.

\begin{definition}
  Suppose that $K = G \zs H$.  We say that the submonoids act on each other by bijections, if for all $h\in H$ the maps
  \[ g \mapsto h \leftacts g  \qquad\text{and}\qquad g \mapsto g \Rightacts h \]
  and for all $g\in G$ the maps
  \[ h \mapsto h \rightacts g  \qquad\text{and}\qquad h \mapsto g \Leftacts h \]
  are bijections.

  In this case, we denote the inverses of these maps as follows:
  \begin{align*}
    g \mapsto h^{-1} \leftacts g &:= (g \mapsto h \leftacts g)^{-1} &
    h \mapsto h \rightacts g^{-1} &:= (h \mapsto h \rightacts g)^{-1} \\
    h \mapsto g^{-1} \Leftacts h &:= (h \mapsto g \Leftacts h)^{-1} &
    g \mapsto g \Rightacts h^{-1} &:= (g \mapsto g \rightacts h)^{-1}
  \end{align*}
  Obviously, there are no \emph{elements} $g^{-1}$ and $h^{-1}$; this
  is just a notational convenience.
\end{definition}

\begin{lemma} \label{action-inverse}
  If $K = G \zs H$ and the submonoids act on each other by bijections
  then, for all $g \in G$ and $h \in H$, one has
  \begin{align*}
    h \rightacts (h^{-1} \leftacts g) &= g^{-1} \Leftacts h &
    (h \rightacts g^{-1}) \leftacts g &= g \Rightacts h^{-1} \\
    g \Rightacts (g^{-1} \Leftacts h) &= h^{-1} \leftacts g &
    (g \Rightacts h^{-1}) \Leftacts h &= h \rightacts g^{-1} \;.
  \end{align*}
\end{lemma}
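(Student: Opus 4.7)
The plan is to derive the first identity directly from \autoref{left-right-identity} together with the definition of the inverse maps, and then obtain the remaining three identities by symmetry, using \autoref{transforming-expressions}.

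For the first identity, I would set $g' := h^{-1} \leftacts g$, which by definition means $h \leftacts g' = g$. Substituting this into the first equation of \autoref{left-right-identity}, namely $(h \leftacts g') \Leftacts (h \rightacts g') = h$, gives $g \Leftacts (h \rightacts g') = h$. Applying the inverse of the bijection $h \mapsto g \Leftacts h$ to both sides yields $h \rightacts g' = g^{-1} \Leftacts h$, which is exactly the claim.

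For the second identity, I would analogously set $h' := h \rightacts g^{-1}$, so that $h' \rightacts g = h$. The second equation of \autoref{left-right-identity} gives $(h' \leftacts g) \Rightacts (h' \rightacts g) = g$, i.e.\ $(h' \leftacts g) \Rightacts h = g$. Inverting the bijection $g \mapsto g \Rightacts h$ then yields $h' \leftacts g = g \Rightacts h^{-1}$.

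The remaining two identities in the right-hand column of the statement are obtained from the two already proved by applying the swap transformation $\sigma$ of \autoref{transforming-expressions}, which exchanges $G \leftrightarrow H$, $\leftacts \leftrightarrow \Leftacts$, and $\rightacts \leftrightarrow \Rightacts$, and is therefore compatible with the naming convention introduced for the inverse maps. There is no genuine obstacle here: the content is essentially that the two commutative square diagrams for $GH$-decompositions and $HG$-decompositions read off mutually inverse bijections, and \autoref{left-right-identity} already records precisely this fact; I simply need to be careful to apply the appropriate inverse map on the correct side.
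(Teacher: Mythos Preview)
Your proof is correct and essentially identical to the paper's: the paper also sets $g' = h^{-1}\leftacts g$, reads off $h\rightacts g' = h'$ and $g\Leftacts h' = h$ directly from the decomposition $hg' = gh'$ (which is precisely the content of \autoref{left-right-identity}), and then declares the remaining equations analogous. Two cosmetic slips: the equations from \autoref{left-right-identity} you invoke are the third and fourth rather than the first and second, and ``right-hand column'' should read ``second row''; the mathematics is unaffected.
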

\begin{proof}
  Suppose $g' = h^{-1} \leftacts g$.  So $h \leftacts g' = g$ and $h
  g' = g h'$ for some $h' \in H$.  From this we see that $h \rightacts
  g' = h'$ and $g \Leftacts h' = h$.  Substituting for $g'$ in the
  former and rearranging the latter we have $h \rightacts (h^{-1}
  \leftacts g) = h' = g^{-1} \Leftacts h$.  Hence the first equation
  holds.

  The remaining equations are shown in an analogous fashion.
\end{proof}

\begin{lemma} \label{action-inverse-2}
  If $K = G \zs H$ and the submonoids act on each other by bijections
  then, for all $g, g_1, g_2 \in G$ and $h, h_1, h_2 \in H$, the following identities hold:
  \begin{align*}
    (h_1 h_2)^{-1} \leftacts g &= h_2^{-1} \leftacts (h_1^{-1} \leftacts g) &
    (g_1 g_2)^{-1} \Leftacts h &= g_2^{-1} \Leftacts (g_1^{-1} \Leftacts h) \\
    h \rightacts (g_1 g_2)^{-1} &= (h \rightacts g_2^{-1}) \rightacts g_1^{-1} &
    g \Rightacts (h_1 h_2)^{-1} &= (g \Rightacts h_2^{-1}) \Rightacts h_1^{-1}
    \end{align*}
    \begin{align*}
    h^{-1} \leftacts (g_1 g_2)
    &=(h^{-1}\leftacts g_1)\left((g_1^{-1}\Leftacts h)^{-1}\leftacts g_2\right)\\
    g^{-1} \Leftacts (h_1 h_2)
    &=(g^{-1}\Leftacts h_1)\left((h_1^{-1}\leftacts g)^{-1}\Leftacts h_2\right)\\
    (h_1 h_2) \rightacts g^{-1}
    &=\left(h_1\rightacts(g\Rightacts h_2^{-1})^{-1}\right)(h_2\rightacts g^{-1})\\
    (g_1 g_2) \Rightacts h^{-1}
    &=\left(g_1\Rightacts(h\rightacts g_2^{-1})^{-1}\right)(g_2\Rightacts h^{-1})
  \end{align*}
\end{lemma}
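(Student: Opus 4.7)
The plan is to derive all eight identities from the corresponding identities in \autoref{ZS-actions}, using bijectivity of the actions together with \autoref{action-inverse}, and to cut the work roughly in half by applying the symmetries from \autoref{transforming-expressions}.

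I would first observe that the eight identities split into two groups of four. The top four identities express how inverse actions compose (they are the ``anti-homomorphism'' counterparts of the first line of \autoref{ZS-actions}); the bottom four express how inverse actions distribute over a product (they are the counterparts of \eqref{action-on-products}). Within each group, the transformations $\sigma$ and $\tau$ of \autoref{transforming-expressions} send any one identity to the three others: $\sigma$ swaps $G\leftrightarrow H$ (hence $\leftacts\leftrightarrow\Leftacts$ and $\rightacts\leftrightarrow\Rightacts$), while $\tau$ exchanges left with right actions, both of which carry over to the inverse maps verbatim because the inverses are defined purely in terms of the original actions. It therefore suffices to establish one identity from each group.

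For the first group, take $(h_1 h_2)^{-1} \leftacts g = h_2^{-1} \leftacts (h_1^{-1} \leftacts g)$. Set $g':= h_2^{-1} \leftacts (h_1^{-1} \leftacts g)$ and compute
\[
  (h_1 h_2) \leftacts g' \;=\; h_1 \leftacts \bigl(h_2 \leftacts g'\bigr) \;=\; h_1 \leftacts \bigl(h_1^{-1} \leftacts g\bigr) \;=\; g,
\]
using the first identity of \autoref{ZS-actions} and the definition of the inverse action. Applying the inverse of $g'\mapsto (h_1 h_2)\leftacts g'$ (which exists by hypothesis) yields the claim.

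For the second group, take $h^{-1} \leftacts (g_1 g_2) = (h^{-1}\leftacts g_1)\bigl((g_1^{-1}\Leftacts h)^{-1}\leftacts g_2\bigr)$. Put $x_1 := h^{-1}\leftacts g_1$ and $x_2 := (g_1^{-1}\Leftacts h)^{-1}\leftacts g_2$. By \eqref{action-on-products},
\[
  h \leftacts (x_1 x_2) \;=\; (h\leftacts x_1)\bigl((h\rightacts x_1)\leftacts x_2\bigr).
\]
The first factor is $g_1$ by definition of $x_1$. For the subscript on the second factor, \autoref{action-inverse} gives $h\rightacts x_1 = h\rightacts(h^{-1}\leftacts g_1) = g_1^{-1}\Leftacts h$, so the second factor equals $(g_1^{-1}\Leftacts h)\leftacts x_2 = g_2$ by the definition of $x_2$. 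Hence $h\leftacts(x_1 x_2) = g_1 g_2$, and applying the inverse of $g'\mapsto h\leftacts g'$ yields the identity. The remaining six identities then follow by applying $\sigma$ and $\tau$.

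I do not expect any genuine obstacle: the proof is mechanical once the bijectivity hypothesis is used to define the inverse maps and \autoref{action-inverse} is in hand. The only mildly delicate point is to check that the symmetry argument via \autoref{transforming-expressions} legitimately applies to statements involving the formal symbols $g^{-1},h^{-1}$; this is fine because these are just abbreviations for preimages under the original action maps, so each transformed identity is equivalent to an identity about the original maps, whose proof proceeds by the same pattern as the two representatives above.
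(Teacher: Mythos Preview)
Your proof is correct and follows essentially the same approach as the paper: for the first block you unwind the action identity from \autoref{ZS-actions} (the paper simply says these ``clearly hold as $\leftacts$, $\rightacts$, $\Leftacts$ and $\Rightacts$ define actions''), and for the second block your computation of $h\leftacts(x_1x_2)=g_1g_2$ via \eqref{action-on-products} and \autoref{action-inverse} is exactly the paper's argument. The only cosmetic difference is that you invoke \autoref{transforming-expressions} explicitly to reduce to one representative per block, whereas the paper just says ``the other equations can be shown in the same way''.
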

\begin{proof}
  The first set of equations clearly hold as $\leftacts$,
  $\rightacts$, $\Leftacts$ and $\Rightacts$ define actions.

  Now consider the first of the second set of equations.  If we apply
  $h$ we have the following.
  \begin{align*}
    h \leftacts \Big(
      (h^{-1}\leftacts {}&g_1)\big((g_1^{-1}\Leftacts h)^{-1}\leftacts g_2\big)
    \Big) \\
    &= g_1
      \Big(
        \big(h \rightacts (h^{-1} \leftacts g_1)\big)
        \leftacts 
        \big((g_1^{-1}\Leftacts h)^{-1}\leftacts g_2\big)
      \Big) 
      &&\text{by \autoref{ZS-actions}} \\
    &= g_1
      \Big(
        (g_1^{-1} \Leftacts h)
        \leftacts 
        \big((g_1^{-1}\Leftacts h)^{-1}\leftacts g_2\big)
      \Big) 
      &&\text{by \autoref{action-inverse}} \\
    &= g_1 g_2
  \end{align*}
  Hence, as required, $h^{-1} \leftacts g_1 g_2 = (h^{-1} \leftacts
  g_1) \left((g_1^{-1} \Leftacts h)^{-1} \leftacts g_2\right)$.

  The other equations can be shown in the same way.
\end{proof}

\begin{proposition} \label{action-isomorphism}
  Suppose $K = G \zs H$, that $G$ and $H$ are cancellative, and that $G$ and $H$ act on each other by bijections.

  Then for all $g \in G$ and $h \in H$, the left actions are isomorphisms of the prefix order and the right actions are isomorphisms of the suffix order:
  \begin{align*}
    h \leftacts \cdot : (G, \prefix_G) &\xlongrightarrow{\sim} (G, \prefix_G) &
    \cdot \rightacts g : (H, \suffix_H) &\xlongrightarrow{\sim} (H, \suffix_H) \\
    g \Leftacts \cdot : (H, \prefix_H) &\xlongrightarrow{\sim} (H, \prefix_H) &
    \cdot \Rightacts h : (G, \suffix_G) &\xlongrightarrow{\sim} (G, \suffix_G)
  \end{align*}
\end{proposition}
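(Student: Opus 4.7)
The plan is to prove the first claim directly, namely that $h \leftacts \cdot : (G, \prefix_G) \to (G, \prefix_G)$ is an order isomorphism, and then obtain the remaining three by the symmetries $\sigma$ and $\tau$ of \autoref{transforming-expressions}. Bijectivity of $h \leftacts \cdot$ is already given by hypothesis, so it suffices to show that this map both preserves and reflects $\prefix_G$.

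For preservation, suppose $g_1 \prefix_G g_2$ and write $g_2 = g_1 g_3$; by the product formula in \autoref{ZS-actions} one has
\[ h \leftacts g_2 \;=\; h \leftacts (g_1 g_3) \;=\; (h \leftacts g_1)\big((h \rightacts g_1) \leftacts g_3\big), \]
so $h \leftacts g_1 \prefix_G h \leftacts g_2$. For reflection, suppose $h \leftacts g_2 = (h \leftacts g_1) \cdot y$ for some $y \in G$. Since $(h \rightacts g_1) \leftacts \cdot$ is by hypothesis surjective on $G$, pick $g_3 \in G$ with $(h \rightacts g_1) \leftacts g_3 = y$; the same formula then yields
\[ h \leftacts (g_1 g_3) \;=\; (h \leftacts g_1)\big((h \rightacts g_1) \leftacts g_3\big) \;=\; (h \leftacts g_1)\cdot y \;=\; h \leftacts g_2, \]
and injectivity of $h \leftacts \cdot$ forces $g_1 g_3 = g_2$, whence $g_1 \prefix_G g_2$.

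The statement for $g \Leftacts \cdot$ follows by applying $\sigma$, which swaps $G \leftrightarrow H$ and $\leftacts \leftrightarrow \Leftacts$. The two statements for the right actions follow by applying $\tau$, which replaces each monoid by its opposite, thereby turning $\prefix$ into $\suffix$ and exchanging $\leftacts$ with $\Rightacts$ (respectively $\Leftacts$ with $\rightacts$). There is no genuine obstacle here; the only thing worth checking is the routine bookkeeping that $\sigma$ and $\tau$ really do translate each claim into the desired one, which is immediate from their definitions in \autoref{transforming-expressions}.
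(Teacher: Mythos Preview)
Your proof is correct. The preservation half is identical to the paper's argument, using the product formula from \autoref{ZS-actions}. For the reflection half, however, the paper takes a slightly different route: rather than arguing directly as you do, it invokes \autoref{action-inverse-2} to obtain an explicit formula for $h^{-1}\leftacts(g_1g_2)$ and thereby shows that the inverse map $h^{-1}\leftacts\cdot$ is itself order-preserving. Your approach instead lifts $y$ through the surjection $(h\rightacts g_1)\leftacts\cdot$ and then uses injectivity of $h\leftacts\cdot$ to recover $g_1\prefix_G g_2$; this is a clean alternative that avoids citing \autoref{action-inverse-2} altogether. Both arguments are short and equivalent in strength; yours is marginally more self-contained, while the paper's version has the side benefit of making the formula for the inverse action visible.
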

\begin{proof}
  \autoref{ZS-actions} implies that these maps are poset morphisms:
  For example, if $g_1 \prefix_G g'$ then there exists $g_2$ such that
  $g' = g_1 g_2$, whence we obtain $h \leftacts g' = (h \leftacts g_1) \left((h
    \rightacts g_1) \leftacts g_2\right)$ and so $h \leftacts g_1
  \prefix_G h \leftacts g'$.

  Similarly, \autoref{action-inverse-2} implies that the inverses of
  these maps are poset morphisms: For example, if $g_1 \prefix_G g'$
  then there exists $g_2$ such that $g' = g_1 g_2$.
  So $h^{-1} \leftacts
  g' = (h^{-1} \leftacts g_1) \left((g_1^{-1} \Leftacts h)^{-1} \leftacts
    g_2\right)$ and thus $h^{-1} \leftacts g_1 \prefix_G h^{-1} \leftacts g'$.
\end{proof}

\begin{lemma}\label{action-under}
 Suppose $K = G \zs H$, that $G$ and $H$ are cancellative, and that $G$ and $H$ act on each other by bijections.

 Then for all $g_1,g_2 \in G$ such that $g_1\join g_2$ exists in $G$ and all $h \in H$ one has
 the following:
 \begin{align*}
   h \leftacts (g_1\under g_2)
        &= \big(g_1\Rightacts h^{-1}\big)
           \,\under\,
           \Big(\big(h\rightacts g_1^{-1}\big)\leftacts g_2 \Big) \\[1ex]
   h^{-1} \leftacts (g_1\under g_2)
        &= \big(g_1\Rightacts h\big)
           \,\under\,
           \Big((g_1 \Leftacts h)^{-1}\leftacts g_2 \Big)  
 \end{align*}
\end{lemma}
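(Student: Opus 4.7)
The plan is to apply the action $h\leftacts\cdot$ to the defining equation $g_1\cdot(g_1\under g_2)=g_1\join g_2$ of the complement, and to exploit \autoref{action-isomorphism}, which says that $h\leftacts\cdot$ is an isomorphism of $(G,\prefix_G)$ and hence preserves joins when they exist. Expanding the two resulting descriptions of $h\leftacts(g_1\join g_2)$ by means of \autoref{ZS-actions} on one hand and the definition of~$\under$ on the other, then cancelling a common prefix in~$G$, yields an auxiliary identity that becomes the desired first formula after an appropriate substitution of~$h$. The second formula is obtained in an analogous way, with \autoref{ZS-actions} replaced by the corresponding product rule from \autoref{action-inverse-2}.

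In more detail, applying $h\leftacts\cdot$ to both sides of $g_1(g_1\under g_2)=g_1\join g_2$ and expanding gives
\[
 (h\leftacts g_1)\cdot\bigl((h\rightacts g_1)\leftacts(g_1\under g_2)\bigr) \;=\; (h\leftacts g_1)\cdot\bigl((h\leftacts g_1)\under(h\leftacts g_2)\bigr),
\]
so by left-cancellativity of~$G$,
\[
 (h\rightacts g_1)\leftacts(g_1\under g_2) \;=\; (h\leftacts g_1)\under(h\leftacts g_2). \qquad(\ast)
\]
Since $h\mapsto h\rightacts g_1$ is a bijection on~$H$, we may substitute $h\rightacts g_1^{-1}$ for~$h$ in~$(\ast)$. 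The left-hand side then becomes $h\leftacts(g_1\under g_2)$, and on the right the term $(h\rightacts g_1^{-1})\leftacts g_1$ simplifies to $g_1\Rightacts h^{-1}$ by \autoref{action-inverse}, yielding the first formula.

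For the second formula, we run the same argument with $h\leftacts\cdot$ replaced by the inverse map $h^{-1}\leftacts\cdot$, using in place of \autoref{ZS-actions} the expansion
\[
 h^{-1}\leftacts(g_1g_2) \;=\; (h^{-1}\leftacts g_1)\bigl((g_1^{-1}\Leftacts h)^{-1}\leftacts g_2\bigr)
\]
from \autoref{action-inverse-2}. The analogue of $(\ast)$ reads
\[
 (g_1^{-1}\Leftacts h)^{-1}\leftacts(g_1\under g_2) \;=\; (h^{-1}\leftacts g_1)\under(h^{-1}\leftacts g_2).
\]
Substituting $g_1\Leftacts h$ for~$h$, which is legitimate because $h\mapsto g_1\Leftacts h$ is a bijection on~$H$, turns the left-hand side into $h^{-1}\leftacts(g_1\under g_2)$; on the right, the term $(g_1\Leftacts h)^{-1}\leftacts g_1$ simplifies to $g_1\Rightacts h$ by the identity $(g_1\Leftacts h)\leftacts(g_1\Rightacts h)=g_1$ from \autoref{left-right-identity}. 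This gives the second formula. The main bookkeeping task is to verify that the substitutions interact correctly with the inverse-action notation, but this is immediate from the fact that the actions are by bijections.
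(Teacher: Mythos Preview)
Your proof is correct and follows essentially the same approach as the paper: derive the auxiliary identity $(h'\rightacts g_1)\leftacts(g_1\under g_2) = (h'\leftacts g_1)\under(h'\leftacts g_2)$ from \autoref{action-isomorphism} and \autoref{ZS-actions}, then substitute $h' = h\rightacts g_1^{-1}$ and simplify with \autoref{action-inverse}; for the second formula replace \autoref{ZS-actions} by \autoref{action-inverse-2} and \autoref{action-inverse} by \autoref{left-right-identity}. The paper's proof is slightly terser but otherwise identical.
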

\begin{proof}
 For any $h'\in H$, one has $h'\leftacts (g_1\join g_2) = (h'\leftacts g_1)\join (h'\leftacts g_2)$ by \autoref{action-isomorphism}.
 On the other hand, using \autoref{ZS-actions}, one has
 $h'\leftacts (g_1\join g_2)
   = h'\leftacts \big(g_1(g_1\under g_2)\big)
   = (h'\leftacts g_1)\big((h'\rightacts g_1)\leftacts (g_1\under g_2)\big)$.
 As $G$ is cancellative, these imply
 $(h'\leftacts g_1)\under(h'\leftacts g_2) = (h'\rightacts g_1)\leftacts (g_1\under g_2)$.
 The first claim follows setting $h'=h\rightacts g_1^{-1}$ and simplifying
 $(h\rightacts g_1^{-1})\leftacts g_1 = g_1\Rightacts h^{-1}$ using \autoref{action-inverse}.
 
 The second claim is shown in the same way, using \autoref{action-inverse-2} instead of \autoref{ZS-actions}
 and \autoref{left-right-identity} instead of \autoref{action-inverse}.
\end{proof}

\begin{lemma}\label{PosetsWellDefined}
 Suppose $K = G \zs H$, that $G$ and $H$ are cancellative and conical, and that $G$ and $H$ act on each other by injections.
 
 Then $(K,\prefix_K)$ and $(K,\suffix_K)$ are posets, and the restrictions of $\prefix_K$ and $\suffix_K$ to $G\times G$ and $H\times H$ coincide with $\prefix_G$, $\suffix_G$, $\prefix_H$ and $\suffix_H$ respectively:
 \[
   \begin{array}{r@{\;\;=\;\;}l@{\qquad\qquad}r@{\;\;=\;\;}l}
     \prefix_K\!{}_{|_{G\times G}} & \prefix_G
      &
     \suffix_K\!{}_{|_{G\times G}} & \suffix_G
      \\[2ex]
     \prefix_K\!{}_{|_{H\times H}} & \prefix_H
      &
     \suffix_K\!{}_{|_{H\times H}} & \suffix_H 
   \end{array}
 \]
\end{lemma}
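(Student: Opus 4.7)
The plan is to prove the lemma in two stages: first establish that $K$ is cancellative and conical, which will give $\prefix_K$ and $\suffix_K$ the partial-order properties, then deduce the restriction statements from uniqueness of $GH$- and $HG$-decompositions.

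To show $K$ is cancellative, I apply \autoref{injective-cancellative} twice. Since $\leftacts$ acts by injections by hypothesis, \autoref{injective-cancellative} directly yields left-cancellativity of $K$. For right-cancellativity, I invoke the $\tau$-transformation of \autoref{transforming-expressions}: under $\tau$, the statement $K = G \zs H$ becomes $K^\op = G^\op \zs H^\op$, and the hypothesis ``$\leftacts$ acts by injections'' translates to ``$\Rightacts$ acts by injections'', which also holds by assumption. Applying \autoref{injective-cancellative} in $K^\op$ yields that $K^\op$ is left-cancellative, that is, $K$ is right-cancellative.

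To show $K$ is conical, suppose $k_1 k_2 = \id$ and write $GH$-decompositions $k_i = g_i h_i$. Using \autoref{ZS-actions}, one expands
\[
   \id = g_1 h_1 g_2 h_2 = \big(g_1 (h_1 \leftacts g_2)\big) \big((h_1 \rightacts g_2) h_2\big),
\]
which is a $GH$-decomposition. Uniqueness against $\id = \id \cdot \id$ gives $g_1 (h_1 \leftacts g_2) = \id$ in $G$ and $(h_1 \rightacts g_2) h_2 = \id$ in $H$. Conicality of $G$ and $H$ combined with \autoref{action-on-id} then forces $g_1 = g_2 = h_1 = h_2 = \id$, so $k_1 = k_2 = \id$. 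Reflexivity and transitivity of $\prefix_K$ and $\suffix_K$ are immediate from the definitions. For antisymmetry of $\prefix_K$: if $y = xu$ and $x = yv$, then $x = x(uv)$; left-cancellativity yields $uv = \id$, and conicality then gives $u = v = \id$, so $x = y$. Antisymmetry of $\suffix_K$ follows by the symmetric argument using right-cancellativity.

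For the restriction statements, the inclusions $\prefix_G \subseteq \prefix_K\!{}_{|_{G\times G}}$ (and the three analogues) are trivial since $G, H \subseteq K$. For the converse, suppose $g_1 \prefix_K g_2$ with $g_1, g_2 \in G$ and write $g_2 = g_1 k$ with $k \in K$; decomposing $k = g_k h_k$, the equality $g_2 = (g_1 g_k) h_k$ is a $GH$-decomposition of $g_2 \in G$. Since the canonical $GH$-decomposition of $g_2$ is $g_2 \cdot \id$, uniqueness forces $h_k = \id$, whence $k = g_k \in G$ and $g_1 \prefix_G g_2$. The case $\suffix_K\!{}_{|_{G\times G}} = \suffix_G$ is obtained identically from uniqueness of $HG$-decompositions, and the two statements for $H$ follow by the $\sigma$-symmetry of \autoref{transforming-expressions}. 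The main obstacle is really just the conicality argument, which relies on the interplay between the uniqueness of decompositions, conicality of the factors, and \autoref{action-on-id}; the remaining steps are bookkeeping.
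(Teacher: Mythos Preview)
Your proof is correct and follows essentially the same approach as the paper: cancellativity of $K$ via \autoref{injective-cancellative} together with the $\tau$-symmetry of \autoref{transforming-expressions}, then the restriction statements from the uniqueness of $GH$- and $HG$-decompositions. The differences are cosmetic: you prove conicality of $K$ explicitly (the paper just asserts that cancellativity yields partial orders, tacitly using conicality), and you re-derive inline the special case of \autoref{ZS-factorial} that the paper cites for the restriction claims.
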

\begin{proof}
  By \autoref{injective-cancellative} and \autoref{transforming-expressions}, the monoid $K$ is cancellative, hence~$\prefix$ and~$\suffix$ define partial orders.
  
  \autoref{ZS-factorial} implies that $G$ and $H$ are closed under $\factor$, which implies the rest of the claim.
  For instance, if $g_1 \prefix_K g_2$ holds for $g_1,g_2\in G$, then there exists $x\in K$ such that $g_1 x = g_2$.  As $G$ is closed under $\factor$, one has $x\in G$ and thus $g_1 \prefix_G g_2$.
  Conversely, $g_1 \prefix_G g_2$ trivially implies $g_1 \prefix_K g_2$.
\end{proof}

\medskip

\noindent
In the situation of \autoref{PosetsWellDefined}, we will in the following just write $\prefix$ and $\suffix$ instead of $\prefix_K$, $\suffix_K$, $\prefix_G$, $\suffix_G$, $\prefix_H$ and $\suffix_H$.
\medskip

\begin{proposition} \label{ZS-lcm}
  Suppose $K = G \zs H$, that $G$ and $H$ are cancellative and conical, and that $G$ and $H$ act on each other by bijections.

  Then for all $g \in G$ and $h\in H$, we have
  \[ g \join h = g h' = h g' =g'\rightjoin h' \]
  where $h' = g^{-1} \Leftacts h$ and $g' = h^{-1} \leftacts g$.
  Moreover, if $g_1,g_2\in G$ and $h_1,h_2\in H$ satisfy $g_1\join h_1 =g_2\join h_2$ or $g_1\rightjoin h_1 =g_2\rightjoin h_2$, then $g_1=g_2$ and $h_1=h_2$.
\end{proposition}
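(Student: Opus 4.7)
My plan is: first, verify that $gh'=hg'$ and that this common value is a $\prefix$-upper bound of $g$ and $h$; second, prove its minimality; third, derive the $\rightjoin$-identity by duality via \autoref{transforming-expressions}; fourth, obtain the uniqueness ``moreover'' from the uniqueness of $GH$- and $HG$-decompositions.

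The first step is essentially a calculation. Since $h'=g^{-1}\Leftacts h$, one has $g\Leftacts h'=h$, so rewriting $gh'$ as its $HG$-decomposition gives $gh'=h\cdot(g\Rightacts h')$; this immediately yields both $g\prefix gh'$ and $h\prefix gh'$. The equality $gh'=hg'$ then reduces to $g\Rightacts h'=g'$, which is one of the identities of \autoref{action-inverse}.

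The minimality step is the main obstacle. Given a common upper bound $k=gu=hv$, I would take $GH$-decompositions $u=g_uh_u$ and $v=g_vh_v$; the two resulting $GH$-decompositions of~$k$, namely $(gg_u)\cdot h_u$ and $(h\leftacts g_v)\cdot(h\rightacts g_v)h_v$, together with uniqueness, force $gg_u=h\leftacts g_v$ and therefore $g_v=h^{-1}\leftacts(gg_u)$. Expanding the right-hand side via the product formula of \autoref{action-inverse-2} produces $g_v=g'\cdot\bigl((h')^{-1}\leftacts g_u\bigr)$, so $g'\prefix g_v$ and consequently $hg'\prefix hv=k$. The crucial ingredient here is the ``inverse-action'' identity of \autoref{action-inverse-2}: without it one cannot extract a factor of $g'$ from $g_v$, and the whole argument collapses.

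For the $\rightjoin$-identity, I would invoke \autoref{transforming-expressions}: the $\tau$-dual of the already-proved formula $g\join h=g(g^{-1}\Leftacts h)$ reads $\tilde g\rightjoin\tilde h=(\tilde h\rightacts\tilde g^{-1})\tilde g$ for all $\tilde g\in G$, $\tilde h\in H$; instantiated at $(g',h')$ and combined with the relation $h\rightacts g'=g^{-1}\Leftacts h=h'$ from \autoref{action-inverse}, this collapses the right-hand side to $hg'$. Finally, for the ``moreover'' part, if $g_1\join h_1=g_2\join h_2$ then by the first statement $g_1h_1'=g_2h_2'$ is an equality of $GH$-decompositions; uniqueness yields $g_1=g_2$ and $h_1'=h_2'$, and bijectivity of $h\mapsto g_1\Leftacts h$ then forces $h_1=h_2$. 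The $\rightjoin$ case is handled identically using $HG$-decompositions and the dual identity from the preceding paragraph.
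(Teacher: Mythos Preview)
Your proposal is correct and follows essentially the same route as the paper's own proof: both establish $gh'=hg'$ via \autoref{action-inverse}, both prove minimality by comparing the two $GH$-decompositions of an arbitrary common upper bound, applying $h^{-1}\leftacts\cdot$ and invoking \autoref{action-inverse-2} to extract the prefix $g'$, and both obtain the ``moreover'' clause from uniqueness of $GH$-decompositions together with bijectivity of the actions. The only cosmetic differences are that the paper takes the $HG$-decomposition of the cofactor of~$g$ (rather than the $GH$-decomposition you use) and simply declares the $\rightjoin$-identity ``analogous'', whereas you make the duality via \autoref{transforming-expressions} explicit.
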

\begin{proof}
  We are in the situation of \autoref{PosetsWellDefined}.
  
  First we will show that $gh' = hg'$.  As $h' = g^{-1} \Leftacts h$, we have
  $g \Leftacts h' = h$ and, using \autoref{action-inverse},
  $
    gh' = h (g \Rightacts h')
        = h (g \Rightacts (g^{-1} \Leftacts h))
        = h (h^{-1} \leftacts g)
        = h g'
  $.
  Therefore, $gh' = hg'$ is a common upper bound of $g$ and $h$ with respect to $\prefix$.

  Now assume we have $g_1,g_2\in G$ and $h_1,h_2\in H$ such that
  $gh_1g_1=hg_2h_2$ is a common upper bound of $g$ and $h$ with
  respect to $\prefix$.  As we have
  \begin{align*}
    g h_1 g_1 &= g (h_1 \leftacts g_1) (h_1 \rightacts g_1) \quad\text{and} \\
    h g_2 h_2 &= (h \leftacts g_2) (h \rightacts g_2) h_2 \;,
  \end{align*}
  uniqueness of $GH$-decompositions implies that $g (h_1\leftacts
  g_1) = (h \leftacts g_2)$.  Acting by $h^{-1}$ on both sides of this
  equality and applying \autoref{action-inverse-2}, we obtain $g' =
  h^{-1} \leftacts g \prefix g_2$, and thus $h g' \prefix h g_2
  \prefix h g_2 h_2$.

  Finally, let $g_1,g_2\in G$ and $h_1,h_2\in H$ satisfy $g_1\join h_1 =g_2\join h_2 = x\in K$.
  By the first part of the proposition, we have $x=g_1 (g_1^{-1} \Leftacts h_1) = g_2 (g_2^{-1} \Leftacts h_2)$.
  Uniqueness of $GH$-decompositions implies $g_1=g_2$ and $g_1^{-1} \Leftacts h_1 = g_2^{-1} \Leftacts h_2$, and this in turn implies $h_1=h_2$, as the action of~$G$ on~$H$ is by bijections.

  The claims for $\rightjoin$ are analogous.
\end{proof}

\begin{remark}
A result for crossed products that is analogous to the following \autoref{ZS-lattice-isomorphism} is~\cite[Proposition~3.12]{Picantin01};
in the light of~\autoref{C:ZS-crossed}, indeed~\autoref{ZS-lattice-isomorphism} follows from~\cite[Proposition~3.12]{Picantin01} if~$K$ is a Garside monoid.
\end{remark}

\begin{theorem}\label{ZS-lattice-isomorphism}
  Suppose $K = G \zs H$, that $G$ and $H$ are cancellative and conical, and that $G$ and $H$ act on each other by bijections.

  The map $G \times H \to K$ given by $(g,h) \mapsto g \join h$ is a poset isomorphism
  $(G, \prefix_G)\times(H, \prefix_H) \to (K, \prefix_K)$.
  
  Similarly, the map $G \times H \to K$ given by $(g,h) \mapsto g \rightjoin h$ is a poset isomorphism
  $(G, \suffix_G)\times(H, \suffix_H) \to (K, \suffix_K)$.
\end{theorem}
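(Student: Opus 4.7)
The plan is to reduce the theorem to \autoref{ZS-lcm}, which already supplies most of what is needed, and then to treat the $\rightjoin$-statement by invoking the $\tau$-symmetry of \autoref{transforming-expressions}.

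First I would note that by \autoref{ZS-lcm} the map $\Phi\from (g,h)\mapsto g\join h$ is well-defined and injective. For surjectivity, given $k\in K$, take its $GH$-decomposition $k=g\,h_*$, set $h:=g\Leftacts h_*$, and observe using the defining relation $g\Leftacts\cdot$ together with the bijectivity of the actions that $g^{-1}\Leftacts h=h_*$; \autoref{ZS-lcm} then gives $g\join h=g\,h_*=k$, so $\Phi$ is a bijection.

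Next I would prove that $\Phi$ is monotone in the easy direction: if $g_1\prefix g_2$ and $h_1\prefix h_2$, then $g_2\join h_2$ is a common upper bound of $g_1$ and $h_1$, hence $g_1\join h_1\prefix g_2\join h_2$. The main obstacle is the reverse implication, i.e., showing that $g_1\join h_1\prefix g_2\join h_2$ forces both $g_1\prefix g_2$ and $h_1\prefix h_2$. For this, set $h'_i:=g_i^{-1}\Leftacts h_i$, so that by \autoref{ZS-lcm} the $GH$-decompositions are $g_i\join h_i=g_i h'_i$. Writing the witness $k_2=k_1 x$ of the prefix relation with $x=g_x h_x$ and expanding via the product rule of \autoref{ZS-actions},
\[
  g_2 h'_2 \;=\; g_1 h'_1 g_x h_x \;=\; g_1(h'_1\leftacts g_x)\,(h'_1\rightacts g_x)\,h_x\,,
\]
uniqueness of $GH$-decompositions yields $g_2=g_1(h'_1\leftacts g_x)$, so $g_1\prefix g_2$. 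Then $h_2=g_2\Leftacts h'_2$; using $(g_1g_2')\Leftacts h=g_1\Leftacts(g_2'\Leftacts h)$, the product formula for $\Leftacts$, and the two identities of \autoref{left-right-identity} that collapse $(h'_1\leftacts g_x)\Leftacts(h'_1\rightacts g_x)=h'_1$ and $(h'_1\leftacts g_x)\Rightacts(h'_1\rightacts g_x)=g_x$, the expression simplifies to $h_2=h_1\cdot\big((g_1\Rightacts h'_1)\Leftacts(g_x\Leftacts h_x)\big)$, whence $h_1\prefix h_2$ as desired.

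Combined with injectivity, this shows that $\Phi$ is an order-isomorphism onto $(K,\prefix_K)$, where $\prefix_K$ extends $\prefix_G$ and $\prefix_H$ by \autoref{PosetsWellDefined}. Finally, applying the transformation $\tau$ of \autoref{transforming-expressions} interchanges $\prefix$ with $\suffix$, $\join$ with $\rightjoin$, and maps the statement for $\join$ to the one for $\rightjoin$, so the $\rightjoin$-case follows without repeating the argument.
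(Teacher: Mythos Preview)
Your proof is correct and follows the same overall scheme as the paper's, but with two worthwhile simplifications.  For the forward monotonicity, the paper exhibits an explicit element~$z$ with $(g_1\join h_1)\,z = g_2\join h_2$, whereas you simply observe that $g_2\join h_2$ is a common upper bound of $g_1$ and $h_1$ and invoke that $g_1\join h_1$ is their \emph{least} upper bound by \autoref{ZS-lcm}; this is cleaner and avoids a page of computation.  For the reverse direction, the paper writes the witness in the form $g_3\join h_3$ and works with both the $GH$- and the $HG$-expressions of $(g_1\join h_1)(g_3\join h_3)$, while you take the $GH$-decomposition $x=g_x h_x$ of the witness and push through a single chain of identities using \autoref{left-right-identity}; the two computations are equivalent in content and length.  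Your appeal to the $\tau$-symmetry of \autoref{transforming-expressions} for the $\rightjoin$-statement is exactly what the paper means by ``analogous''.
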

\begin{proof}
  We are in the situation of \autoref{PosetsWellDefined}, so we will drop the subscripts of the partial orders.
  
  By \autoref{ZS-lcm}, we can write any $x \in K$ in a unique way as $x = g_1 \join h_2$,
  where $g_1 h_1$ and $h_2 g_2$ are the $GH$-, respectively, $HG$-decompositions of~$x$.
  Hence the map $(g,h) \mapsto g \join h$ is a bijection.
  \medskip

  \begin{claim*}
    If $g_1, g_2 \in G$ and $h_1, h_2 \in H$ are such that $g_1 \prefix
    g_2$ and $h_1 \prefix h_2$ then $g_1 \join h_1 \prefix g_2 \join
    h_2$.
  \end{claim*}

  If $g_1 \prefix g_2$ and $h_1 \prefix h_2$, then there
  are $g_3 \in G$ and $h_3 \in H$ such that $g_1 g_3 = g_2$ and
  $h_1 h_3 = h_2$.  Now consider the following, where $g_1 \join h_1 =
  g_1 h_1' = h_1 g_1'$:
  \begin{align*}
    (g_1 \join h_1) &
    \left((h_1'^{-1} \leftacts g_3) \join (g_1'^{-1} \Leftacts h_3)\right) \\
    &= g_1 h_1' (h_1'^{-1} \leftacts g_3) \left(
      (h_1'^{-1} \leftacts g_3)^{-1} \Leftacts (g_1'^{-1} \Leftacts h_3) 
    \right) \\
    &= \underbrace{g_1 g_3}_{\in G}
    \underbrace{\left( h_1' \rightacts (h_1'^{-1} \leftacts g_3) \right) \left(
        (h_1'^{-1} \leftacts g_3)^{-1} \Leftacts (g_1'^{-1} \Leftacts h_3) 
      \right)}_{\in H} \\[1ex]
%
%
    (g_1 \join h_1) &
    \left((h_1'^{-1} \leftacts g_3) \join (g_1'^{-1} \Leftacts h_3)\right) \\
    &= h_1 g_1' (g_1'^{-1} \Leftacts h_3) \left(
      (g_1'^{-1} \Leftacts h_3)^{-1} \leftacts (h_1'^{-1} \leftacts g_3) 
    \right) \\
    &= \underbrace{h_1 h_3}_{\in H}
    \underbrace{\left( g_1' \Rightacts (g_1'^{-1} \Leftacts h_3) \right) \left(
        (g_1'^{-1} \Leftacts h_3)^{-1} \leftacts (h_1'^{-1} \leftacts g_3) 
      \right)}_{\in G}
  \end{align*}
  By the uniqueness of $GH$- and $HG$-decompositions and \autoref{ZS-lcm}, we thus have $(g_1 \join h_1) \left((h_1'^{-1} \leftacts g_3) \join (g_1'^{-1}
    \Leftacts h_3)\right) = g_1 g_3 \join h_1 h_3 = g_2 \join h_2$.
  Hence \mbox{$g_1 \join h_1 \prefix g_2 \join h_2$} and so the claim holds.
  \medskip
  
  \begin{claim*}
    If $g_1, g_2 \in G$ and $h_1, h_2 \in H$ are such that $g_1 \join h_1
    \prefix g_2 \join h_2$ then $g_1 \prefix g_2$ and $h_1 \prefix
    h_2$.
  \end{claim*}

  If $g_1 \join h_1 \prefix g_2 \join h_2$, there are
  $g_3 \in G$ and $h_3 \in H$ such that $(g_1 \join h_1)(g_3 \join
  h_3) = g_2 \join h_2$.

  Now consider the following.
  \begin{align*}
    (g_1 \join h_1)(g_3 \join h_3)
    &= g_1 (g_1^{-1} \Leftacts h_1) g_3 (g_3^{-1} \Leftacts h_3) \\
    &= \underbrace{
      g_1 \left((g_1^{-1} \Leftacts h_1) \leftacts g_3\right)
    }_{\in G}
    \underbrace{
      \left((g_1^{-1} \Leftacts h_1) \rightacts g_3\right)
      (g_3^{-1} \Leftacts h_3)
    }_{\in H} \\[1ex]
%
%
    (g_1 \join h_1)(g_3 \join h_3)
    &= h_1 (h_1^{-1} \leftacts g_1) h_3 (h_3^{-1} \leftacts g_3) \\
    &= \underbrace{
      h_1 \left((h_1^{-1} \leftacts g_1) \Leftacts h_3\right)
    }_{\in H}
    \underbrace{
      \left((h_1^{-1} \leftacts g_1) \Rightacts h_3\right)
      (h_3^{-1} \leftacts g_3)
    }_{\in G}
  \end{align*}
  Therefore $(g_1 \join h_1)(g_3 \join h_3) = g_1 \left((g_1^{-1}
    \Leftacts h_1) \leftacts g_3\right) \join h_1 \left((h_1^{-1}
    \leftacts g_1) \Leftacts h_3\right)$.  So $g_2 = g_1
  \left((g_1^{-1} \Leftacts h_1) \leftacts g_3\right)$ and $h_2 = h_1
  \left((h_1^{-1} \leftacts g_1) \Leftacts h_3\right)$.  Hence $g_1
  \prefix g_2$ and $h_1 \prefix h_2$ and so the claim holds.
  \medskip 

  \noindent
  We have shown that the map $(g,h)\mapsto g\join h$ is invertible and
  that both this map and its inverse preserve the ordering.  Therefore
  it is an isomorphism between the respective posets.
  
  The claim for the map $(g,h)\mapsto g\rightjoin h$ is shown analogously.
\end{proof}

\begin{lemma}\label{L:ZS-under}
  Suppose $K = G \zs H$, that $G$ and $H$ are cancellative and conical, and that $G$ and $H$ act on each other by bijections.
  
  Then, for all $g_1,g_2 \in G$ such that $g_1\join g_2\in G$ exists and for all $h_1,h_2 \in H$ such that $h_1\join h_2\in H$ exists, the elements $g_1 \join h_1$ and $g_2\join h_2$ of $K$ admit a $\prefix$-least common upper bound in $K$, and one has
  \[ (g_1 \join h_1)\under (g_2\join h_2) 
     = \bigg(\big( g_1^{-1}\Leftacts h_1 \big)^{-1} \leftacts (g_1\under g_2)\bigg)\,\join\,
       \bigg(\big( h_1^{-1}\leftacts g_1 \big)^{-1} \Leftacts (h_1\under h_2)\bigg) . \]
\end{lemma}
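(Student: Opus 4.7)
My plan is to reduce the identity to the poset isomorphism established in \autoref{ZS-lattice-isomorphism}. First I set $\tilde g := g_1 \join g_2 \in G$ and $\tilde h := h_1 \join h_2 \in H$, which exist by hypothesis. Since $(g,h)\mapsto g\join h$ is order-preserving (\autoref{ZS-lattice-isomorphism}), the element $\tilde g \join \tilde h \in K$ is a common $\prefix$-upper bound of $g_1\join h_1$ and $g_2\join h_2$ in $K$. Conversely, any common upper bound in $K$ is of the form $g\join h$ with $g\in G$ and $h\in H$ by \autoref{ZS-lcm}, and the second claim inside the proof of \autoref{ZS-lattice-isomorphism} forces $g_1,g_2\prefix g$ and $h_1,h_2\prefix h$, so $\tilde g\prefix g$ and $\tilde h\prefix h$, hence $\tilde g\join\tilde h$ is a prefix of it. This shows that $(g_1\join h_1)\join(g_2\join h_2)$ exists in $K$ and equals $\tilde g\join\tilde h$.

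For the explicit formula, I note that left-cancellativity in $G$ and $H$ gives $g_1 \under \tilde g = g_1 \under g_2$ and $h_1 \under \tilde h = h_1 \under h_2$, obtained by cancelling $g_1$ (respectively $h_1$) in the identities $g_1(g_1\under\tilde g) = \tilde g = g_1 \join g_2 = g_1(g_1\under g_2)$ and its $H$-analogue. I then apply verbatim the calculation inside the first claim of the proof of \autoref{ZS-lattice-isomorphism} to the chains $g_1 \prefix \tilde g$ and $h_1 \prefix \tilde h$, with step parts $g_3 := g_1\under g_2$ and $h_3 := h_1\under h_2$. Writing $g_1\join h_1 = g_1 \bar h_1 = h_1 \bar g_1$, where $\bar h_1 = g_1^{-1}\Leftacts h_1$ and $\bar g_1 = h_1^{-1}\leftacts g_1$, that calculation yields
\[
 (g_1\join h_1)\cdot\bigl((\bar h_1^{-1}\leftacts g_3)\join (\bar g_1^{-1}\Leftacts h_3)\bigr) \;=\; \tilde g\join\tilde h \;=\; (g_1\join h_1)\join(g_2\join h_2).
\]
Since $K$ is left-cancellative by \autoref{injective-cancellative}, this identifies $(g_1\join h_1)\under(g_2\join h_2)$ with the right-hand factor, which is exactly the claimed expression.

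The main technical point, though a small one, is the equality $g_1\under\tilde g = g_1\under g_2$ (and its $H$-analogue) used above: without it, the step sizes along the chain $g_1\prefix\tilde g$ would be recorded as $g_1\under\tilde g$ rather than as $g_1\under g_2$, and the formula inherited from the earlier theorem would not specialise to the claimed one. Everything else is a direct invocation of already-proved facts, and no new manipulation of the \ZS{} actions is required.
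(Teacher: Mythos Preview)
Your proposal is correct and follows essentially the same route as the paper: both arguments first use the poset isomorphism of \autoref{ZS-lattice-isomorphism} to identify $(g_1\join h_1)\join(g_2\join h_2)$ with $(g_1\join g_2)\join(h_1\join h_2)$, then exhibit this element as $(g_1\join h_1)$ times the claimed right-hand side and cancel. The only difference is cosmetic: you recycle the computation from the first claim in the proof of \autoref{ZS-lattice-isomorphism}, whereas the paper redoes an equivalent computation from scratch using \autoref{ZS-lcm}, \autoref{action-inverse}, and \autoref{action-inverse-2}.
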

\begin{proof}
Let $g'=g_1\under g_2$ and $h'=h_1\under h_2$, so $g_1\join g_2 = g_1 g'$ and $h_1\join h_2 = h_1 h'$.
Using \autoref{ZS-lattice-isomorphism}, \autoref{ZS-lcm}, \autoref{action-inverse} and \autoref{action-inverse-2}, we obtain
\begin{align*}
 (g_1&\join h_1) \join (g_2\join h_2)
    = (g_1\join g_2) \join (h_1\join h_2)
    = g_1 g' \Big( (g_1 g')^{-1} \Leftacts (h_1h') \Big) \\
   &= g_1 g' \Big( g'^{-1} \Leftacts \big( g_1^{-1} \Leftacts (h_1h') \big) \Big) \\
   &= g_1 \big(g_1^{-1} \Leftacts (h_1 h')\big)
          \Big( g' \Rightacts \Big( g'^{-1} \Leftacts \big(g_1^{-1} \Leftacts (h_1 h') \big) \Big) \Big ) \\
   &= \underbrace{g_1 \big(g_1^{-1} \Leftacts h_1\big)}_{= g_1\join h_1}
          \underbrace{\Big( \big(h_1^{-1} \leftacts g_1\big)^{-1} \Leftacts h' \Big)}_{\in H}
          \underbrace{\Big( \big( g_1^{-1} \Leftacts (h_1 h') \big)^{-1} \leftacts g' \Big)}_{\in G}
\end{align*}
and likewise
\begin{align*}
 (g_1&\join h_1) \join (g_2\join h_2)
    = (g_1\join g_2) \join (h_1\join h_2)
    = h_1 h' \Big( (h_1 h')^{-1} \leftacts (g_1g') \Big) \\
   &= \ldots = \underbrace{h_1 \big(h_1^{-1} \leftacts g_1\big)}_{= g_1\join h_1}
          \underbrace{\Big( \big(g_1^{-1} \Leftacts h_1\big)^{-1} \leftacts g' \Big)}_{\in G}
          \underbrace{\Big( \big( h_1^{-1} \leftacts (g_1 g') \big)^{-1} \Leftacts h' \Big)}_{\in H}
  \;.
\end{align*}
Thus, as $K$ is cancellative, applying \autoref{ZS-lcm} yields
\begin{multline*}
(g_1\join h_1) \join (g_2\join h_2) = \\ 
(g_1\join h_1) \bigg(
\Big(\big( g_1^{-1}\Leftacts h_1 \big)^{-1} \leftacts (g_1\under g_2)\Big)\,\join\,
\Big(\big( h_1^{-1}\leftacts g_1 \big)^{-1} \Leftacts (h_1\under h_2)\Big) \bigg).
\end{multline*}
\end{proof}

\section{Actions in the case of Garside monoids}
\label{S:ActionsForGarsideMonoids}

In this section, we analyse the actions of the factors of a \ZS{} product on one another in the case that the product is a Garside monoid, or that both of the factors are Garside monoids.
Using these results, we prove that a \ZS{} product $K=G\zs H$ of monoids is a Garside monoid if and only if both~$G$ and~$H$ are Garside monoids.

\begin{lemma} \label{action-bijections}
  If $K = G \zs H$ is a Garside monoid then the submonoids act on each other by
  bijections.
\end{lemma}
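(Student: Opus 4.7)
The plan is to prove that $g\mapsto h\leftacts g$ is a bijection $G\to G$ for every $h\in H$; the bijectivity of the other three actions then follows by applying the symmetry operations $\sigma$ and $\tau$ of \autoref{transforming-expressions}.

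First I would verify the hypotheses of \autoref{action-injective}. Since $K$ is Garside, it is cancellative and conical, so its submonoids $G$ and $H$ inherit these properties; in addition, uniqueness of $GH$-decompositions immediately gives $G\cap H=\{\id\}$. Common multiples in the prefix order exist in~$G$: given $g_1,g_2\in G$, the join $g_1\join g_2$ in~$K$ has a $GH$-decomposition whose $G$-part is, by uniqueness of $GH$-decompositions, a common multiple of~$g_1$ and~$g_2$ lying in~$G$. Hence \autoref{action-injective} yields that $g\mapsto h\leftacts g$ is injective.

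For surjectivity, given $h\in H$ and $a\in G$, I would form the join $m:=h\join a$ in~$K$ (which exists since $K$ is Garside) and write $m=hp=aq$ with $p,q\in K$. The aim is to show $p\in G$ and $q\in H$; uniqueness of the $GH$-decomposition of~$m$ will then immediately give $h\leftacts p=a$, providing the required preimage.

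The technical core, which I expect to be the main obstacle, is showing that $p\in G$ and $q\in H$. Writing $q=q_Gq_H$ via $GH$-decomposition, computing the $HG$-decomposition of~$m$ from both $m=hp$ and $m=aq$ by means of \autoref{ZS-actions}, and using uniqueness of $HG$-decompositions, I would derive that $a\cdot(q_G\Leftacts q_H)$ is itself a common multiple of~$h$ and~$a$. Minimality of~$m$ then forces $q_Gq_H\prefix q_G\Leftacts q_H$, so $q_Gq_H\,t=q_G\Leftacts q_H\in H$ for some $t\in K$. Applying the dual of \autoref{ZS-factorial} (valid because $G$ is conical) yields $q_G\in H$, and $G\cap H=\{\id\}$ forces $q_G=\id$, so $q=q_H\in H$. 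Uniqueness of $GH$-decompositions of $m=hp=aq$ then gives $h\leftacts p_G=a$; hence $hp_G$ is itself a common multiple of~$h$ and~$a$, so comparing with $m=hp_Gp_H$ via minimality of~$m$ yields $p_H=\id$, whence $p\in G$ and $h\leftacts p=a$. Applying $\sigma$ and $\tau$ to the resulting statement finally gives bijectivity of the remaining three actions.
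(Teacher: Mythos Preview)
Your argument is correct, and it differs from the paper's in an interesting way.

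For injectivity, the paper gives a self-contained argument using greatest common divisors in~$K$: it reduces to the case $g_1\meet g_2=\id$ and then computes $h g_1 \meet h g_2$ in two ways. You instead invoke \autoref{action-injective} after verifying that common prefix-multiples exist in~$G$ (via the $GH$-decomposition of $g_1\join_K g_2$). Both routes work; yours is slightly more modular, the paper's is slightly more direct.

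The real difference is in surjectivity. The paper argues that the action takes atoms to atoms (\autoref{action-on-atoms}), and then uses that the set of atoms is \emph{finite} to conclude that the action is surjective on atoms, whence \autoref{action-surjective} finishes. Your approach is entirely different: you build a preimage of an arbitrary $a\in G$ directly from the join $h\join a$ in~$K$, and the key manoeuvre---showing that $a\,(q_G\Leftacts q_H)$ is already a common multiple of $h$ and $a$, forcing $q_G=\id$ via the $\sigma$-dual of \autoref{ZS-factorial}---is a nice lattice-theoretic trick that nowhere uses finiteness of the atom set. This is a genuine improvement: the paper's authors themselves note (in a marginal comment at exactly this point) that the finiteness hypothesis is used only here, and that removing it would allow the results to extend to quasi-Garside monoids. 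Your surjectivity argument achieves precisely that.
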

\begin{proof}
  We will first show that the maps are injective.  So suppose that $h
  \leftacts g_1 = h \leftacts g_2 = g$; we need to show that $g_1 = g_2$.

  Let $h_1 = h \rightacts g_1$ and $h_2 = h \rightacts g_2$.  So we have
  \begin{align} \label{bijective1}
    h g_1 &= g h_1 \qquad\text{and}\qquad
    h g_2 = g h_2 \;.
  \end{align}

  First consider the case when $g_1 \meet g_2 = \id$.  Taking the GCD
  of the two elements in \eqref{bijective1} gives
  \begin{align*}
    h g_1 \meet h g_2 = h (g_1 \meet g_2) = h \qquad\text{and}\qquad
    g h_1 \meet g h_2 = g (h_1 \meet h_2) \;.
  \end{align*}
  Uniqueness of $GH$-decompositions implies $g = \id$ (and
  $h = h_1 \meet h_2$), and uniqueness of the $HG$-decompositions
  in \eqref{bijective1} then implies $g_1 = \id = g_2$.

  Now suppose that $g_1 \meet g_2 \ne \id$, so
  \begin{align*}
    g_1 = (g_1 \meet g_2) \bar g_1 \qquad\text{and}\qquad
    g_2 = (g_1 \meet g_2) \bar g_2
  \end{align*}
  for some $\bar g_1, \bar g_2 \in G$ with $\bar g_1 \meet \bar g_2 = \id$.

  We can now apply the formula for the action on a product from
  \autoref{ZS-actions}:
  \begin{align*}
    h \leftacts g_1 &= h \leftacts (g_1 \meet g_2) \bar g_1 \\
                    &= \left(h \leftacts (g_1\meet g_2)\right)
                       \left(\left(h \rightacts (g_1 \meet g_2)\right)
                         \leftacts \bar g_1\right) \\[0.5em]
    h \leftacts g_2 &= h \leftacts (g_1 \meet g_2) \bar g_2 \\
                    &= \left(h \leftacts (g_1\meet g_2)\right)
                       \left(\left(h \rightacts (g_1 \meet g_2)\right)
                         \leftacts \bar g_2\right)
  \end{align*}
  Cancellativity of $K$ means that
  $ h' \leftacts \bar g_1 = h' \leftacts \bar g_2 $, 
  where $h' = h \rightacts (g_1 \meet g_2)$.  As $\bar g_1 \meet \bar
  g_2 = \id$ we can apply the first case to deduce that $\bar g_1 =
  \bar g_2$ and so $g_1 = g_2$.

  Similar arguments show that the other maps are injective, so it
  remains to show that the maps are surjective.

  \commS{I think this is the only place where we use the finiteness
    condition.  If we can avoid using it then I think everything
    generalizes to quasi-Garside monoids.}
  \commV{This was commented on when I gave the talk in the Garside theory working group at Paris.
    The general feeling there seemed to be that we actually do use this hypothesis here.}
  First note that, by \autoref{action-on-atoms}, the maps take atoms
  to atoms.  So, as the sets of atoms are finite, the maps are
  bijections on these sets.  The surjectivity of the maps then follows from \autoref{action-surjective} and \autoref{transforming-expressions}.
\end{proof}

\begin{remark}
An analogous result for crossed products is \cite[Lemma 3.2]{Picantin01}.
\end{remark}

\begin{corollary}\label{C:ZS-crossed}
 If $K$ is a Garside monoid with submonoids~$G$ and~$H$, one has $K = G\zs H$ if and only if~$K$ can be written as a crossed product of the monoids~$G$ and~$H$.
\end{corollary}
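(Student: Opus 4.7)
The plan is to prove the two implications separately. The direction ``crossed product $\Longrightarrow$ internal \ZS{} product'' has already been noted in the introduction: by~\cite[Proposition~3.6]{Picantin01}, every crossed product admits unique $GH$- and $HG$-decompositions, which is exactly the condition of \autoref{D:ZappaSzepProduct}, so $K = G\zs_\Theta H$ immediately gives $K = G\zs H$.

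For the converse, assume $K = G\zs H$ is a Garside monoid. By \autoref{action-bijections}, the submonoids $G$ and $H$ act on each other by bijections, so one may define
\[
 \Theta_{1,2}(g,h) := g^{-1}\Leftacts h, \qquad
 \Theta_{2,1}(h,g) := h^{-1}\leftacts g \,.
\]
Property (CP-a) is then immediate from the definition of the inverse actions. The four conditions (CP-b) translate under these definitions into four identities of \autoref{action-inverse-2}: the two iterated-action conditions become the ``associativity'' formulas $(g_1g_2)^{-1}\Leftacts h = g_2^{-1}\Leftacts(g_1^{-1}\Leftacts h)$ and $(h_1h_2)^{-1}\leftacts g = h_2^{-1}\leftacts(h_1^{-1}\leftacts g)$, while the two ``Leibniz'' conditions become the expansion formulas for $g^{-1}\Leftacts(h_1h_2)$ and $h^{-1}\leftacts(g_1g_2)$ listed in the second block of \autoref{action-inverse-2}. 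To invoke the crossed-product construction one also needs $G$ and $H$ to be cancellative, conical, and to have finitely many atoms; cancellativity and conicity are inherited from $K$, and \autoref{ZS-factorial} shows that an atom of $G$ or $H$ remains an atom of $K$ while every atom of $K$ lies in $G\cup H$, so the atom sets of $G$ and $H$ are finite subsets of the finite atom set of~$K$.

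Finally, I would exhibit an isomorphism $\phi\from G\zs_\Theta H\to K$. The inclusions $G,H\hookrightarrow K$ induce a homomorphism from the free product $G*H$ to $K$; under the above definitions, the defining crossed-product relation $g\cdot\Theta_{1,2}(g,h) = h\cdot\Theta_{2,1}(h,g)$ reads $g(g^{-1}\Leftacts h) = h(h^{-1}\leftacts g)$, and by \autoref{ZS-lcm} both sides equal $g\join h$ in~$K$. Hence the homomorphism descends to a map $\phi\from G\zs_\Theta H\to K$. Since both~$K$ (by hypothesis) and $G\zs_\Theta H$ (by~\cite[Proposition~3.6]{Picantin01}) admit unique $GH$-decompositions, and $\phi$ sends the decomposition $gh$ in the domain to $gh$ in the codomain, the map $\phi$ restricts to a bijection on the common indexing set $G\times H$ and is therefore an isomorphism of monoids. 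The only real care required in the whole argument is the bookkeeping needed to match the four conditions~(CP-b) with the correctly-oriented identities of \autoref{action-inverse-2}; I expect no substantial obstacle to arise.
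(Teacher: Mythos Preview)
Your proof is correct and follows essentially the same approach as the paper's own proof: both directions rest on \cite[Proposition~3.6]{Picantin01} and on \autoref{action-bijections} together with \autoref{action-inverse-2}, with the same choice $\Theta_{1,2}(g,h)=g^{-1}\Leftacts h$ and $\Theta_{2,1}(h,g)=h^{-1}\leftacts g$. Your write-up is in fact more complete than the paper's, which simply asserts that ``$K$ is a crossed product of~$G$ and~$H$'' once (CP-a) and (CP-b) are verified; you additionally check the standing hypotheses on $G$ and $H$ (cancellative, conical, finitely many atoms) and construct the identifying isomorphism $G\zs_\Theta H\to K$ explicitly via \autoref{ZS-lcm}.
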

\begin{proof}
If~$K$ is a crossed product of~$G$ and~$H$, then every element of~$K$ has a unique $GH$-decomposition and a unique $HG$-decomposition by~\cite[Proposition~3.6]{Picantin01}, so one has $K=G\zs H$.

If $K = G\zs H$, then~\autoref{action-bijections} implies that~$G$ and~$H$ act on each other by bijections and we can define
\[
\Theta_{1,2}(g,h) = g^{-1} \Leftacts h
\qquad \mbox{and}\qquad
\Theta_{2,1}(h,g) = h^{-1} \leftacts g
\]
for $g\in G$ and $h\in H$. 
The maps $\Theta_{1,2}$ and $\Theta_{2,1}$ satisfy (CP-a) and (CP-b) by~\autoref{action-inverse-2}, so~$K$ is a crossed product of~$G$ and~$H$.
\end{proof}

\begin{remark}
\autoref{C:ZS-crossed} says that, in the context of Garside monoids, the notion of crossed products in the sense of~\cite{Picantin01} and the notion of (external or internal) \ZS{}-products in the sense of~\cite{Brin05} are equivalent.
\autoref{E:NotInjective} shows that this is not true in general.
\end{remark}

\begin{theorem} \label{ZS-parabolic} 
  If $K=G\zs H$ is a Garside monoid then $G$ and $H$ are parabolic submonoids of $K$.
  In particular, $G$ and $H$ are Garside monoids.
\end{theorem}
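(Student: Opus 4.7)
My plan is to show that $G$ is a parabolic submonoid of $K$ in the sense of \autoref{D:ParabolicSubmonoid}; then \autoref{P:ParabolicSubmonoid} immediately delivers that $G$ is Garside, and the same argument (or the transformation $\sigma$ of \autoref{transforming-expressions}) handles $H$. Throughout, let $\Delta$ denote the Garside element of $K$. By \autoref{action-bijections}, $G$ and $H$ act on each other by bijections, so \autoref{ZS-lattice-isomorphism} identifies $(K,\prefix)$ with $(G,\prefix)\times(H,\prefix)$ as posets (and analogously for $\suffix$); since $K$ is a lattice, this makes $G$ a sublattice of $K$ under both $\prefix$ and $\suffix$. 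Together with the fact (\autoref{ZS-factorial}) that $G$ is closed under taking factors in $K$, this will give me all the structural tools I need.

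The candidate for the parabolic-defining element is $\delta_G := \Join_K D$, where $D := \{x \in G : x \prefix_K \Delta\}$. The join exists because $\Delta$ is an upper bound; by sublattice-closure it lies in $G$; and since $\delta_G \prefix \Delta$ is simple, $\delta_G$ lies in $D$ and is therefore its $\prefix$-maximum. Using balancedness of $\Delta$, the set $D$ coincides with $\{x \in G : x \suffix_K \Delta\}$, and symmetrically $\delta_G' := \rightJoin_K D$ is the $\suffix$-maximum of $D$.

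The main technical step, and the one place where the argument is not purely formal, is to show that $\delta_G = \delta_G'$. Extremality applied to the other maximum gives $\delta_G' \prefix \delta_G$ and $\delta_G \suffix \delta_G'$, so there exist $u, v \in K$ with $\delta_G = \delta_G' v$ and $\delta_G' = u \delta_G$; concatenating these yields $\delta_G = u\delta_G v$, and comparing atomic lengths (using that $K$ is atomic, so $\|u\delta_G v\| \geq \|u\| + \|\delta_G\| + \|v\|$ while $\|u\delta_G v\| = \|\delta_G\|$) forces $\|u\|=\|v\|=0$ and hence $u=v=\id$.

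With $\delta_G = \delta_G'$ in hand the remaining properties fall into place. For balancedness of $\delta_G$ in $K$: if $x \prefix_K \delta_G$, then $x$ is a factor of $\delta_G \in G$, so $x \in G$ by \autoref{ZS-factorial}, and $x \prefix \Delta$ places $x$ in $D$, whence $\suffix$-maximality of $\delta_G = \delta_G'$ gives that $\delta_G$ has $x$ as a right-divisor. The reverse inclusion is analogous, using that $\delta_G \in D$ implies $\delta_G \suffix \Delta$, so any right-divisor of $\delta_G$ is a right-divisor of $\Delta$, hence simple, and lies in $G$ by factor-closure. The atoms of $K$ dividing $\delta_G$ are precisely those belonging to $G$ (they lie in $G$ by factor-closure, and every atom of $G$ lies in $D$), so $M_{\delta_G} = G$; finally, $\{x \in K : x \prefix_K \delta_G\} = D = \Simples_K \cap G$ holds by construction of $\delta_G$ as the $\prefix$-maximum of $D$. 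Invoking \autoref{P:ParabolicSubmonoid} concludes the proof for $G$, and the symmetric argument settles $H$.
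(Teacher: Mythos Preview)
Your proof is correct and follows essentially the same skeleton as the paper's: define the candidate element as the $\prefix$-maximum of $D = \Simples_K \cap G$, show it coincides with the $\suffix$-maximum of the same set via an atomic-length argument, and then read off balancedness and the generation condition using \autoref{ZS-factorial}.

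The one genuine difference is in how you establish that $\delta_G := \Join_K D$ lies in $G$. You route this through \autoref{action-bijections} and \autoref{ZS-lattice-isomorphism} to get that $G$ is a $\prefix$-sublattice of $K$. The paper avoids this machinery entirely: it writes $\Delta = d_G d_H$ as its $GH$-decomposition, and shows directly that any $x \in \Simples_K \cap G$ satisfies $x \prefix d_G$ by writing $x\,\complement_K x = \Delta$, decomposing $\complement_K x$ as $gh$, and comparing $GH$-decompositions. This is more elementary, uses only \autoref{ZS-factorial}, and simultaneously identifies $\delta_G$ concretely as the $G$-component of $\Delta$. Your route is perfectly valid but imports heavier tools than needed. (Also, watch the paper's convention: $y \suffix x$ means $x$ is a right-divisor of $y$, so a couple of your $\suffix$ relations are written in the opposite direction from the paper's usage, though your conclusions from them are correct.)
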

\begin{proof}
  Let $d_G d_H$ and $e_H e_G$ be the $GH$-, respectively,
  $HG$-decompositions of~$\Delta$.

  Suppose that $x \in \Simples \intersect G$ is a simple element which
  lies in $G$.  Now, as $x$ is a simple element, there is
  $\complement x \in K$ such that $x \complement x = \Delta$.  Let $gh$ be
  the $GH$-decomposition of $\complement x$.  So we have
  \[ x g h = \Delta \;. \]
  As $x \in G$, the uniqueness of $GH$-decompositions means that
  \[ x g = d_G \qquad \text{and} \qquad h = d_H \;. \]
  Hence $x$ is a prefix of $d_G$.  Since $d_G$ is a simple element and
  a member of the submonoid $G$, we have that $d_G$ is the
  $\prefix$-LCM of the intersection of $\Simples$ and $G$.  A similar
  argument shows that $e_G$ is the $\suffix$-LCM of the same set.
  \begin{align}
    d_G &= \leftJoin (\Simples \intersect G) &
    e_G &= \rightJoin (\Simples \intersect G) \label{ZS-LCM}
  \end{align}
  Now observe that $e_G \in \Simples \intersect G$ and hence $e_G
  \prefix d_G$.  We also have that $d_G \in \Simples \intersect G$ and
  so $e_G \suffix d_G$.  Together these imply that $d_G = e_G$.

  If $x$ is a prefix of $d_G$ then it is a simple element and, by
  \autoref{ZS-factorial}, an element of $G$.  Therefore $x$ is an
  element of $\Simples \intersect G$.  So, by \eqref{ZS-LCM}, $x$ is a
  suffix of $e_G = d_G$.  Similarly, every suffix of $d_G$ is also a
  prefix of $d_G$.  Therefore $d_G$ is a balanced element with 
  $\Div(d_G) = \Simples \intersect G$.
  
  Every element of $K$ can be written as a product of atoms, so, by
  \autoref{ZS-factorial}, $G$ is generated by the atoms of $K$ which
  lie in $G$.  Every atom in $G$ is clearly in $\Simples \intersect
  G$, hence $G$ is generated by the divisors of $d_G$.  Therefore, $G$
  is a parabolic submonoid and $d_G$ is a Garside element.

  The same argument with the roles of $G$ and $H$ reversed shows that
  $H$ is also a parabolic submonoid and that $d_H = e_H$ is a
  Garside element.
\end{proof}

\begin{remark}
The proof of \autoref{ZS-parabolic} shows that decomposing a Garside element of a Garside monoid $K=G\zs H$
gives Garside elements for $G$ and $H$.  However, not every pair of Garside elements for the submonoids $G$ and $H$ can be produced this way, as \autoref{E:SemidirectProduct} shows.
\end{remark}

\begin{proposition}\label{LocalDeltas}
Let $K=G\zs H$ be a Garside monoid and let $g\in G$.
Then $\Delta_g^{K} = \Join \{ x\under g : x \in K \} \in G$.
\end{proposition}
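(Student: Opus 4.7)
The plan is to show that every summand $y\under g$ in the defining join $\Delta_g^K = \Join\{y\under g : y\in K\}$ already lies in $G$; the conclusion then follows from $G$ being a sublattice of $K$ under $\prefix$.

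By \autoref{ZS-parabolic}, $G$ is a parabolic submonoid of $K$, hence by \autoref{P:ParabolicSubmonoid} a sublattice of $K$ under $\prefix$. By \autoref{action-bijections}, $G$ and $H$ act on one another by bijections, so we may invoke \autoref{L:ZS-under}.

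Fix $y\in K$ and let $y = pq = rs$ be its $GH$- and $HG$-decompositions, with $p, s\in G$ and $q, r\in H$. By \autoref{ZS-lcm}, $y = p\join r$, while trivially $g = g\join \id$. Since $p\join g$ exists in $G$ (as $G$ is itself a Garside monoid), we may apply \autoref{L:ZS-under} with $g_1=p$, $h_1=r$, $g_2=g$, $h_2=\id$, obtaining
\[
y\under g = \Big((p^{-1}\Leftacts r)^{-1}\leftacts (p\under g)\Big) \join \Big((r^{-1}\leftacts p)^{-1}\Leftacts (r\under \id)\Big).
\]
In the second summand, $r\under \id = \id$, and by uniqueness of $HG$-decompositions one has $g'\Leftacts \id = \id$ for every $g'\in G$, so its inverse action also sends $\id$ to $\id$. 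Thus the second summand collapses, giving $y\under g = (p^{-1}\Leftacts r)^{-1}\leftacts (p\under g)\in G$, since $p\under g\in G$ and the action of $H$ on $G$ (and its inverse) preserves $G$.

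Consequently every summand $y\under g$ lies in the sublattice $G\subseteq K$. Since each $y\under g$ is a $\prefix$-divisor of $\Delta_g^K$ (which exists in $K$ by \autoref{P:QuasiCentre}(d)), the defining join reduces to a $\prefix$-join of finitely many distinct elements of $G$, so it lies in $G$. The main point to treat carefully is the collapse of the second summand in the formula from \autoref{L:ZS-under}; the remainder is bookkeeping.
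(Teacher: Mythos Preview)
Your proof is correct and follows essentially the same route as the paper's. Both arguments show that every $y\under g$ lies in $G$ and then conclude via the parabolic (sublattice) property; the only difference is cosmetic: you invoke \autoref{L:ZS-under} with $h_2=\id$, whereas the paper unpacks that same computation directly, obtaining the identical formula $x\under g = (g_1^{-1}\Leftacts h_1)^{-1}\leftacts(g_1\under g)$ for $x = g_1\join h_1$.
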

\begin{proof}
For $x=g_1\join h_1$ with $g_1\in G$ and $h_1\in H$, write $g_1\join g = g_1g_2$. Then
\begin{align*}
      g\join x
       = &\ (g_1\join g)\join h_1 
       = (g_1 g_2)\join h_1 \\
     & = h_1 \big(h_1^{-1}\leftacts (g_1g_2)\big)
       = h_1(h_1^{-1}\leftacts g_1) g_3
       = x g_3
\end{align*}
with $g_3 = (g_1^{-1}\Leftacts h_1)^{-1} \leftacts g_2$, that is, $x\under g = g_3\in G$.
As $x$ was arbitrary and $G$ is a parabolic submonoid by \autoref{ZS-parabolic}, we have $\Delta_g^{K}\in G$.
\end{proof}

\begin{example}\label{E:SemidirectProduct}
Consider the monoid $K = \langle a,b,c \,|\, ab=ba, ac=cb, bc=ca\rangle^+$ with the submonoids $G_1 = \langle a \rangle^+$, $G_2 = \langle b \rangle^+$, $G=\langle a,b \rangle^+$ and $H = \langle c \rangle^+$.
(That is, $G \cong \NN_0^2$ and $K \cong \NN_0^2 \rtimes \langle c \rangle^+$ where the action of $c$ on~$\NN_0^2$ is given by swapping the coordinates.)  Clearly, the monoid $K$ is a \ZS{} product of the submonoids $G$ and $H$.  Moreover, $K$, $G$ and $H$ are Garside monoids whose minimal Garside elements are $\Delta_K=abc$, $\Delta_G=ab$, respectively $\Delta_H=c$.
\begin{enumerate}[itemindent=6mm,listparindent=\parindent,itemsep=\parsep,leftmargin=0cm]
\item
We see that not every pair of Garside elements for~$G$ and~$H$ can be obtained by decomposing a Garside element of~$K$:
The element $\Delta'_G=a^2b$ is also a Garside element for the monoid $G$, yet $\Delta'_G \Delta_H=a^2bc=cab^2$ is not balanced (and not equal to $\Delta_H\Delta'_G$) and so cannot be a Garside element for the monoid $K$.

\item
Moreover, although for $g\in G$ one has $\Delta_g^{K} = \Join \{ x\under g : x \in K \} \in G$ by \autoref{LocalDeltas}, in general $\Delta_g^{K} \neq \Delta_g^{G} = \Join \{ x\under g : x \in G \}$:
For $x\in G$ one has $x\under a=\id $ if $a\prefix x$ and $x\under a=a$ otherwise.  Thus, $\Delta_a^{G}=a$.  However, $c\under a=b$, so $\Delta_a^{K}\neq\Delta_a^{G}$, although both are elements of $G$.  (In fact, $\Delta_a^{K}=ab=\Delta_G$.)

\item
The example also shows that forming \ZS{} products is not associative:
We have $K = (G_1 \zs G_2) \zs H$,
but any parabolic submonoid containing both $b$ and $c$ also must contain $a\prefix cb$, so it is not true that $K = G_1 \zs (G_2 \zs H)$.
\end{enumerate}
\end{example}

\begin{theorem} \label{ZS-of-Garside-is-Garside}
  Suppose that $K = G \zs H$ and that $G$ and $H$ are Garside monoids.
  Then $K$ is a Garside monoid.
\end{theorem}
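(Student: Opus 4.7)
The strategy is to verify the four Garside axioms for $K$, exploiting the actions of $G$ and $H$ on each other introduced in \autoref{S:Actions}. The first step will be to upgrade the actions to bijections; this will allow me to derive cancellativity, atomicity and the lattice structure of $K$, after which I will construct a Garside element from suitably chosen Garside elements of $G$ and $H$.

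Since $G$ and $H$ are Garside (hence cancellative with pairwise $\prefix$-lcms), \autoref{action-injective} gives injectivity of $\leftacts$; \autoref{action-on-atoms} shows that $h \leftacts (\cdot)$ restricts to an injection $\Atoms_G \to \Atoms_G$, which is a bijection since $\Atoms_G$ is finite, and \autoref{action-surjective} then promotes this to surjectivity on all of $G$. The other three actions follow by \autoref{transforming-expressions}. Cancellativity of $K$ then follows from \autoref{injective-cancellative} together with its $\tau$-dual, and conicality of $K$ from that of $G$ and $H$ via \autoref{action-on-id}. The atoms of $K$ are precisely $\Atoms_G \sqcup \Atoms_H$ (one inclusion uses \autoref{ZS-factorial}, the other conicality), and any atom decomposition in $K$ of $x = gh$ has length $||g||_{\Atoms_G} + ||h||_{\Atoms_H}$ (swaps in rewrites preserve atom counts by \autoref{action-on-atoms}), which is bounded; hence $K$ is atomic. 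The lattice structure of $(K, \prefix)$ and of $(K, \suffix)$ then follows from \autoref{ZS-lattice-isomorphism}.

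For the Garside element I will make the specific choice $\Delta_G := \Join\Atoms_G$ and $\Delta_H := \Join\Atoms_H$ (the minimal Garside elements of the factors) and set $\Delta_K := \Delta_G \join \Delta_H$. Since by \autoref{action-isomorphism} each map $h \leftacts (\cdot)$ is a $\join$-preserving bijection of $G$ that permutes $\Atoms_G$, it must fix $\Delta_G = \Join\Atoms_G$; symmetrically every $g \Leftacts (\cdot)$ fixes $\Delta_H$. Applying \autoref{ZS-lcm} then shows $\Delta_K = \Delta_G \Delta_H = \Delta_H \Delta_G = \Delta_G \rightjoin \Delta_H$, so both lattice isomorphisms of \autoref{ZS-lattice-isomorphism} send $(\Delta_G, \Delta_H)$ to $\Delta_K$. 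Consequently the left-divisors of $\Delta_K$ form the finite set $\{g \join h : (g,h) \in \Simples_G \times \Simples_H\}$, which contains $\Atoms_K$, so divisors of $\Delta_K$ generate $K$, while the right-divisors form $\{g \rightjoin h : (g,h) \in \Simples_G \times \Simples_H\}$.

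The main obstacle is verifying that $\Delta_K$ is balanced, i.e.\ that these two sets coincide as subsets of $K$. For this I will use \autoref{ZS-lcm} once more: any left-divisor $g \join h$ with $(g,h) \in \Simples_G \times \Simples_H$ equals $(h^{-1} \leftacts g) \rightjoin (g^{-1} \Leftacts h)$, and since the inverse actions are also poset automorphisms fixing $\Delta_G$ and $\Delta_H$, the pair $(h^{-1} \leftacts g, g^{-1} \Leftacts h)$ still lies in $\Simples_G \times \Simples_H$; so every left-divisor is a right-divisor, and the reverse inclusion is symmetric. The key insight---reflected in the remark following \autoref{ZS-parabolic} and in \autoref{E:SemidirectProduct}---is that an arbitrary pair of Garside elements for $G$ and $H$ need not yield a Garside element for $K$; the choice $\Join\Atoms_G$ and $\Join\Atoms_H$ works precisely because these atom-lcms are preserved by the actions.
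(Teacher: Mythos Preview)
Your overall architecture matches the paper's proof closely: the verification that the actions are bijections, that $K$ is cancellative, conical and atomic, and that $(K,\prefix)$ and $(K,\suffix)$ are lattices via \autoref{ZS-lattice-isomorphism}, is exactly what the paper does. The balancedness argument for $\Delta_K$ via \autoref{ZS-lcm} is also the same scheme as the paper's.

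The gap is in your choice of Garside elements for the factors. You set $\Delta_G:=\Join\Atoms_G$ and call it ``the minimal Garside element'' of $G$, but you neither prove nor cite that $\Join\Atoms_G$ is balanced in $G$; this is not a result available in the paper, and it is not obvious. Your balancedness argument for $\Delta_K$ genuinely needs it: you pass from a left-divisor $g\join h$ (with $g\prefix\Delta_G$, $h\prefix\Delta_H$) to $(h^{-1}\leftacts g)\rightjoin(g^{-1}\Leftacts h)$, and since $h^{-1}\leftacts(\cdot)$ is only a $\prefix$-automorphism (\autoref{action-isomorphism}), you obtain $h^{-1}\leftacts g\prefix\Delta_G$, not $\Delta_G\suffix h^{-1}\leftacts g$. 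To conclude that this element is a \emph{right}-divisor of $\Delta_K$ via the $\suffix$-half of \autoref{ZS-lattice-isomorphism}, you need left- and right-divisors of $\Delta_G$ to coincide, i.e.\ $\Delta_G$ balanced.

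The paper avoids this issue by taking instead $D_G:=\Join_{a\in\Atoms_G}\Delta_a^G$, which lies in the quasi-centre of $G$ by \autoref{P:QuasiCentre} and is therefore balanced for free. The price is that action-invariance of $D_G$ is no longer the one-line argument you give (permute atoms, preserve their join); the paper has to invoke \autoref{action-under} to show $h\leftacts D_G\prefix D_G$ and $h^{-1}\leftacts D_G\prefix D_G$. So your invariance argument is cleaner, but it rests on an element whose balancedness you have not established; replacing $\Join\Atoms_G$ by $\Join_{a\in\Atoms_G}\Delta_a^G$ (and supplying the corresponding invariance proof) closes the gap.
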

\begin{proof}
  We write $\Delta_g$ to mean $\Delta_g^G$ for $g\in G$ and $\Delta_h$ to mean $\Delta_h^H$ for $h\in H$.

  By \autoref{action-injective} and \autoref{transforming-expressions}, the monoids act on each other by injections.
  Let $\Atoms_G$ and $\Atoms_H$ denote the sets of atoms of $G$ respectively $H$.
  By \autoref{action-on-atoms}, the actions act on the sets $\Atoms_G$ respectively $\Atoms_H$, so as these sets are finite, the actions are surjective on the sets of atoms, and thus the actions act surjectively on the whole of the submonoids by \autoref{action-surjective}.
  
  We are in the situation of \autoref{PosetsWellDefined}.  
  For $g \in G$ and $h \in H$, \autoref{ZS-lcm} yields that $gh' = hg'$ is the $\prefix$-LCM of $g$ and $h$ in $K$, where $h' = g^{-1} \Leftacts h$ and $g' = h^{-1} \leftacts g$.
  Moreover, by \autoref{ZS-lattice-isomorphism}, the map $(g,h) \mapsto g \join h$ is a poset isomorphism, hence $(K,\prefix)$ is a lattice.
  Likewise, using the map $(g,h) \mapsto g \rightjoin h$, one has that $(K,\suffix)$ is a lattice.
  
  As $G$ and $H$ are closed under $\factor$ by \autoref{ZS-factorial}, the set of atoms of $K$ is $\Atoms = \Atoms_G\cup\Atoms_H$.
  As every element of $K$ has a $GH$-decomposition and both $G$ and $H$ are atomic, $K$ is generated by $\Atoms_G\cup\Atoms_H=\Atoms$.
  Suppose $k=gh$ with $g\in G$ and $h\in H$.
  By \autoref{action-on-atoms}, we can rewrite each expression for $k$ as a product of atoms of $K$ as a $GH$-decomposition without changing its length.  Hence $||k||_{\Atoms} = ||g||_{\Atoms_G} + ||h||_{\Atoms_H} < \infty$, so $K$ is atomic.
  

  Define $D_G := \Join_{a\in\Atoms_G} \Delta_a = \Join \big\{ g\under a : g\in G,\, a\in\Atoms_G \big\}$.
  By \autoref{P:QuasiCentre}, we have that $D_G = \Delta_{\Join\Atoms_G}$ is balanced.
  Moreover, for any $h\in H$ we have by \autoref{action-isomorphism}, \autoref{action-under} and \autoref{action-on-atoms}
  \begin{align*}
    h \leftacts D_G 
     &= \Join \Big\{ h \leftacts (g\under a) : g\in G,\, a\in\Atoms_G \Big\} \\
     &= \Join \Big\{ (\underbrace{g\Rightacts h^{-1}}_{\in G})
                     \under \big( \underbrace{(h\rightacts g^{-1})\leftacts a}_{\in\Atoms_G} \big)
                       : g\in G,\, a\in\Atoms_G \Big\}
     \prefix D_G
  \end{align*}
  and, likewise, $h^{-1} \leftacts D_G \prefix D_G$.
  Hence $h \leftacts D_G = D_G = h^{-1} \leftacts D_G$ for any $h\in H$.
  
  Similarly $D_H := \Join_{a\in\Atoms_H} \Delta_a$ is a balanced element satisfying 
  $g \Leftacts D_H = D_H = g^{-1} \Leftacts D_H$ for any $g\in G$.
  Now define $D := D_G G_H$.
  One has $D = D_G \join D_H = D_G \rightjoin D_H$ by \autoref{ZS-lcm}.
  
  To see that $D$ is balanced, let $g\in G$ and $h\in H$ and consider $g'=h^{-1}\leftacts g$ and $h'=g^{-1}\Leftacts h$.  Using \autoref{action-isomorphism}, \autoref{ZS-lattice-isomorphism}, the invariance of $D_G$ under $h^{-1}\leftacts\cdot$ and the invariance of $D_H$ under $g^{-1}\Leftacts\cdot$, together with the fact that $D_G$ and $D_H$ are balanced, one has
  \begin{align*}
     g\join h\prefix D
       &\Longleftrightarrow (g\prefix D_G
             \;\text{ and }\; h\prefix D_H)    \\
       &\Longleftrightarrow (h^{-1}\leftacts g\prefix h^{-1}\leftacts D_G
             \;\text{ and }\; g^{-1}\Leftacts h\prefix g^{-1}\Leftacts D_H)    \\
       &\Longleftrightarrow (g'\prefix D_G
             \;\text{ and }\; h'\prefix D_H)  \\
       &\Longleftrightarrow (D_G\suffix g'
             \;\text{ and }\; D_H\suffix h')  \\
       &\Longleftrightarrow D \suffix g'\rightjoin h' = g\join h
     \;.
  \end{align*}
  Thus, $D$ is a balanced element of~$K$ whose divisors include the generating set $\Atoms_G\cup\Atoms_H$ of~$K$, so~$D$ is a Garside element for~$K$.
\end{proof}

\begin{remark}
\autoref{E:SemidirectProduct} shows that the construction of a Garside element for the monoid $K=G\zs H$ in the proof of \autoref{ZS-of-Garside-is-Garside} is needed; in general $\Delta_G\Delta_H$ need not be a Garside element for $K$.
\end{remark}

\medskip

\noindent
We finish this section by giving a characterisation of Garside monoids that can be decomposed as a \ZS{} product.

\begin{definition}\label{D:Indecomposable}
 A Garside monoid is \emph{$\zs$-indecomposable}, if it cannot be written as a \ZS{} product of two non-trivial submonoids.
\end{definition}

\begin{remark}
The harder direction of the following \autoref{T:Indecomposable} is implied by~\cite[Proposition~4.5]{Picantin01} and~\autoref{C:ZS-crossed}.
We nevertheless give a direct proof, in order to demonstrate that the result does not depend on the notion of crossed products.
\end{remark}

\begin{theorem}[{cf.\ \cite[Proposition 4.5]{Picantin01}}]\label{T:Indecomposable}
  A Garside monoid $M$ is $\Delta$-pure if and only if it is $\zs$-indecom\-posable.
\end{theorem}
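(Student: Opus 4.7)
The proof splits into two implications, one easy and one hard.

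For $\Delta$-pure $\Rightarrow$ $\zs$-indecomposable, I argue by contrapositive. Assume $M = G \zs H$ with both factors non-trivial. By \autoref{ZS-parabolic}, $G$ and $H$ are parabolic submonoids of $M$, so each contains at least one atom. Uniqueness of $GH$-decompositions forces $G \cap H = \{\id\}$, since any $k \in G \cap H$ gives the two $GH$-decompositions $k = k \cdot \id = \id \cdot k$. Picking atoms $a \in \Atoms \cap G$ and $b \in \Atoms \cap H$, \autoref{LocalDeltas} applied in the two symmetric roles yields $\Delta_a^{M} \in G$ and $\Delta_b^{M} \in H$. If $M$ were $\Delta$-pure, then $\Delta_a^{M} = \Delta_b^{M}$ would lie in $G \cap H = \{\id\}$, contradicting $a \prefix \Delta_a^{M}$ with $a \neq \id$.

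For the converse, assume $M$ is not $\Delta$-pure. By \autoref{P:QuasiCentre}(b) and~(f), the atoms partition into $r \geq 2$ equivalence classes $C_1, \ldots, C_r$ under $a \sim b \iff \Delta_a = \Delta_b$, and the distinct values $\Delta_1, \ldots, \Delta_r$ form a free basis of $\QZ$. Fix a non-trivial partition $\{1, \ldots, r\} = I \sqcup J$. Writing $\Delta = \prod_k \Delta_k^{m_k} \in \QZ$ with each $m_k \geq 1$ (since $\Delta_k \prefix \Delta$), set
\[
\delta_G = \prod_{i \in I} \Delta_i^{m_i}, \qquad \delta_H = \prod_{j \in J} \Delta_j^{m_j}, \qquad G = M_{\delta_G}, \qquad H = M_{\delta_H},
\]
so $\delta_G \delta_H = \delta_H \delta_G = \Delta$. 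Both $\delta_G$ and $\delta_H$ are balanced (as elements of $\QZ$) and simple (divisors of $\Delta$). The atoms prefix-dividing $\delta_G$ are exactly $\bigcup_{i \in I} C_i$ by \autoref{P:QuasiCentre}(a) and the free abelian structure of $\QZ$. The parabolic condition $\{x : x \prefix \delta_G\} = \Simples \cap G$ then follows by combining the balancedness of $\delta_G$ (which equates prefix and factor divisibility, so that prefixes of $\delta_G$ decompose as products of atoms in $\bigcup_{i \in I} C_i$) with the observation that for $x \in G \cap \Simples$, the element $\Delta_x$ is supported on $I$ in $\QZ$ and prefix-divides $\Delta$, hence $\Delta_x \prefix \delta_G$ and thus $x \prefix \delta_G$. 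So $G$ and $H$ are parabolic, and $G \cap H = \{\id\}$, because any $x$ in the intersection has $\Delta_x$ supported on both $I$ and $J$, forcing $\Delta_x = \id$ and hence $x = \id$.

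It remains to show $M = G \zs H$. Uniqueness of $GH$-decompositions is immediate: if $k = g_1 h_1 = g_2 h_2$ with $g_i \in G, h_i \in H$, then for $N$ large one has $k \meet \delta_G^N = g_i$, since $g_i \prefix \delta_G^N$ while any additional factor would lie in $G \cap H = \{\id\}$ (using the closure properties of parabolic submonoids from \autoref{P:ParabolicSubmonoid}); thus $g_1 = g_2$ and $h_1 = h_2$ by cancellativity. Existence of a $GH$-decomposition is proved by induction on the atomic length of $k \in M$, which reduces to the following commutation lemma: for atoms $b \in \bigcup_{j \in J} C_j$ and $a \in \bigcup_{i \in I} C_i$, the product $ba$ admits a $GH$-decomposition. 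This is the main obstacle. One computes $\Delta_{ba} = \Delta_a \Delta_b$ in $\QZ$ --- the chain $ba \prefix \Delta_b \cdot a \prefix \Delta_b \Delta_a$ gives $\Delta_{ba} \prefix \Delta_a \Delta_b$, while $a, b \prefix \Delta_{ba}$ (using the balancedness of $\Delta_{ba}$ and that $a$ is a suffix of $ba$) gives the converse --- so $ba \prefix \Delta_a \Delta_b \in G \cdot H$. A lattice analysis of $\Div(\Delta_a \Delta_b)$, which decomposes as $\Div(\Delta_a) \cdot \Div(\Delta_b)$ because $\Delta_a$ and $\Delta_b$ commute and have disjoint atomic support in $\QZ$, then yields the desired $GH$-decomposition of $ba$. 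Existence of $HG$-decompositions follows symmetrically via \autoref{transforming-expressions}. This commutation lemma is the technical heart of the proof --- essentially the algebraic content of Picantin's Proposition~4.5, established here directly from the quasi-central structure to avoid invoking crossed products.
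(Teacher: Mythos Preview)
Your easy direction (decomposable $\Rightarrow$ not $\Delta$-pure) matches the paper's argument.

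For the hard direction your setup is a reasonable variant of the paper's --- an arbitrary bipartition of the atom equivalence classes in place of the paper's choice of one class versus the rest --- and your uniqueness argument via $k \meet \delta_G^N$ is close in spirit to the paper's. But there is a genuine gap at exactly the point you flag as the ``technical heart''.

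You assert that $\Div(\Delta_a \Delta_b) = \Div(\Delta_a) \cdot \Div(\Delta_b)$, justified only by ``$\Delta_a$ and $\Delta_b$ commute and have disjoint atomic support in $\QZ$''. This does not follow: divisors of $\Delta_a \Delta_b$ in $M$ are arbitrary elements of $M$, not elements of $\QZ$, so the free abelian structure of $\QZ$ says nothing about how they factor. The claimed product decomposition of the divisor set is equivalent to the existence of $GH$-decompositions for all divisors of $\Delta_a\Delta_b$ --- which is what you are trying to prove. The paper fills this gap by a chain of explicit estimates: it shows $g\under D_H = D_H$ and $h\under D_G = D_G$, deduces that $h\under g$ and $g\under h$ are non-trivial when $g,h$ are, parlays this into the length inequality $\lVert g\rVert_\Atoms \le \lVert h\under g\rVert_\Atoms$, and from that concludes that for \emph{atoms} $g\in\calG$, $h\in\calH$ the residues $h\under g$ and $g\under h$ are again atoms in $\calG$ and $\calH$ respectively, with the induced maps on atoms bijective. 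Only then can a general word be rewritten into $GH$-form, one transposition at a time via $g(g\under h)=h(h\under g)$. Your ``lattice analysis'' skips all of this.

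There is a secondary circularity in your parabolicity argument: you claim that for $x \in G \cap \Simples$ the element $\Delta_x$ is supported on $I$ in $\QZ$. To see this you would need to exhibit some $c\in\QZ$ supported on $I$ with $x\prefix c$; the natural candidate is a product of the $\Delta_i$ with $i\in I$, but showing that $x$ (a product of atoms from $\bigcup_{i\in I}C_i$) is a prefix of such a product again requires the atom-level commutation you have not yet established.
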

\begin{proof}
We write $\Delta_x$ to mean $\Delta_x^M$ for any $x\in M$.
\bigskip

\noindent
First assume $M=G\zs H$ with non-trivial monoids $G$ and $H$.  Choose two atoms $g\in G$ and $h\in H$.
By \autoref{LocalDeltas}, we have $\id\neq g\prefix \Delta_g\in G$ and $\id\neq h\prefix \Delta_h\in H$.
As $G\cap H=\{\id\}$ by uniqueness of $GH$-decompositions, this implies that $\Delta_g\neq\Delta_h$, so $M$ is not $\Delta$-pure.
\bigskip

\noindent
Now assume that $M$ is not $\Delta$-pure.  By \autoref{P:QuasiCentre}, we can partition the set of atoms of $M$ as $\Atoms = \calG\,\dot\cup\,\calH$, such that $\Delta_g=\Delta_{g'}$ for $g,g'\in\calG$ and $\Delta_g\meet\Delta_h=\id$ for $g\in\calG$, $h\in\calH$.
Let $G := \langle\calG\rangle^+$, $H := \langle\calH\rangle^+$, $D_G := \Delta_g$ for some (hence for all) $g\in\calG$,
and $D_H := \Join_{h\in\calH}\Delta_h = \Delta_{\Join\calH}$.
\medskip

\begin{claim*}
If $a\in\Atoms$, then $a\factor D_G$ iff $a\in\calG$ and $a\factor D_H$ iff $a\in\calH$.
\end{claim*}
If $g\in\calG$ and $h\in\calH$, then one has $g\prefix \Delta_g=D_G$ and $h\prefix \Delta_h\prefix D_H$ by \autoref{P:QuasiCentre}.  Conversely, again using \autoref{P:QuasiCentre}, one has $h\meet D_G\prefix \Delta_h \meet \Delta_g=\id$, so $h\not\prefix D_G$, and finally, $g\prefix D_H$ would imply $\Delta_g\prefix \Join_{h\in\calH}\Delta_h$ by \autoref{P:QuasiCentre}, but the monoid generated by $\{ \Delta_a : a \in\Atoms\}$ is free abelian and $g\in\Atoms\setminus\calH$.
The claim then follows with \autoref{P:QuasiCentre}.
\medskip

\begin{claim*}
One has $D_G\calG = \calG D_G$, $D_G\calH = \calH D_G$, $D_H\calG = \calG D_H$, and $D_H\calH = \calH D_H$.
\end{claim*}
By \autoref{P:QuasiCentre}, the elements $D_G$ and $D_H$ are quasi-central, so one has
$D_G\Atoms = \Atoms D_G$ and $D_H\Atoms = \Atoms D_H$.  Let $g\in\calG$ and $h\in\calH$.
As $x\under h\prefix \Delta_h$ holds for any $x\in M$ by definition of $\Delta_h$,
assuming $D_G g = h D_G$ implies $D_G\under h\prefix g\meet\Delta_h=\id$, so $ h\prefix D_G$, which is a contradiction.
Similarly, $D_H h = g D_H$ would imply $D_H\under g\prefix h\meet\Delta_g=\id$, so $g\prefix D_H$, which is a contradiction.
\medskip

\begin{claim*}
One has $g\under D_H=D_H$ for any $g \in G$, and $h\under D_G=D_G$ for any $h \in H$.
\end{claim*}
First suppose that $g\in \calG$.  Then by the previous claim $g D_H
= D_H g'$ for some atom $g' \in \calG$.
As $g\not\prefix D_H$, we have $g \join D_H = D_H x
\prefix D_H g'$ for some non-trivial~$x$.  Hence $x \prefix g'$ and so
$x = g'$ and the first claim follows in this case.

One has $\id\under D_H=D_H$ by definition.  If $\id\neq g \in G$, then write $g=g_1 g_2 \cdots g_k$ with $g_1,g_2,\ldots g_k\in\calG$.
From the previous case and \autoref{L:Complements} we obtain $(g_1\cdots g_j)\under D_H=g_j\under\big((g_1\cdots g_{j-1})\under D_H\big)=g_j\under D_H=D_H$ for $j=1,\ldots,k$ by induction, hence the first claim is shown.

The second claim is shown in the same way.
\medskip

\begin{claim*}
If $g \prefix D_G$ and $h \prefix D_H$ are non-trivial, then $h\under g$ is a non-trivial prefix of~$D_G$, and $g\under h$ is a non-trivial prefix of~$D_H$.
\end{claim*}
Using \autoref{L:Complements} and the previous claim, we have $g\under h \prefix g\under D_H = D_H$.
If $g\under h$ were trivial then $h\prefix h(h\under g) = g \prefix D_G$, which is a contradiction.

The claim for $h\under g$ is shown in the same way.
\medskip

\begin{claim*}
If $g \prefix D_G$ and $h \prefix D_H$, then $||g||_\Atoms \le ||h\under g||_\Atoms$ and $||h||_\Atoms \le ||g\under h||_\Atoms$.
\end{claim*}
Consider the first inequality.
By \autoref{L:Complements} it is enough to consider the case where $g$ is
an atom and in this case the result follows from the previous claim.

The argument for the second inequality is analogous.
\medskip

\begin{claim*}
For atoms $g \in \calG$ and $h \in \calH$, one has $h \under g \in
\calG$ and $g \under h \in \calH$.
\end{claim*}
\rightskip35mm
Let $g \join D_H = g D_H = D_H g'$ with $g'\in\calG$ and pick $h_1, \ldots, h_k \in\calH$ such that $D_H = h_1 h_2 \cdots h_k$ and $h_1 = h$.  Let $g_1 = g$ and $g_{i+1} = h_i \under
g_i$ for $i=1,\ldots,k$.  By the previous claim we have $||g_1||_\Atoms \le
||g_2||_\Atoms \le \cdots \le ||g_{k+1}||_\Atoms$ and by \autoref{L:Complements} we have $g_{k+1} = g'$.
\hfill\begin{picture}(0,0)(-10,-14)
\begin{tikzpicture}
 \draw [dashed,->] (0,-1) -- (0.75,-1);
 \draw [dotted] (1,-1) -- (2.2,-1);
 \draw [->] (0.05,0) -- (0.75,0) node [above] at (0.4,0) {$h_1$};
 \draw [dotted] (1,0) -- (2.2,0);
 \draw [->] (2.2,-0.05) -- (2.2,-0.95) node [right] at (2.2,-0.5) {$g'$};
 \draw [dashed,->] (0.8,-0.05) -- (0.8,-0.95) node [right] at (0.8,-0.5) {$g_2$};
 \draw [->] (0,-0.05) -- (0,-0.95) node [left] at (0,-0.5) {$g_1$};
\end{tikzpicture}
\end{picture}
\par
\rightskip0mm

As $g'$ is an atom, we have $||g'||_\Atoms=1$, hence $||g_i||_\Atoms=1$ for
all $i$.  In particular, $g_2 = g \under h$ is an atom.

The argument for $h\under g$ is analogous.
\medskip

\begin{claim*}
The map $g \mapsto h\under g$ for fixed $h\in\calH$ is a bijection on $\calG$
and the map $h \mapsto g\under h$ for fixed $g\in\calG$ is a bijection on $\calH$.
\end{claim*}
Let $h\in\calH$ and assume $h\under g_1=h\under g_2$ for $g_1,g_2\in\calG$.
As $h\prefix D_H$, there exists $\bar h\in H$ such that $h\bar h=D_H$.  Moreover, as $g_1\under h\prefix D_H$ and $g_2\under h\prefix D_H$, there exist $h_1$ and $h_2\in H$ such that $(g_1\under h)h_1=D_H$ respectively $(g_2\under h)h_2=D_H$.  Finally, let $g'_1,g'_2\in\calG$ be such that $g_1 D_H=D_H g'_1$ and
$g_2 D_H=D_H g'_2$.

\hfill\begin{minipage}{0.41\textwidth}
\begin{tikzpicture}[scale=1.2]
 \draw [dashed,->] (0,-1) -- (0.95,-1) node [below] at (0.5,-1) {$g_1\under h$};
 \draw [dashed,->] (1.05,-1) -- (2.95,-1) node [below] at (2,-1) {$h_1$};
 \draw [->] (0.05,0) -- (0.95,0) node [above] at (0.5,0) {$h$};
 \draw [dashed,->] (1.05,0) -- (2.95,0) node [above] at (2,0) {$\bar h$};
 \draw [->] (3,-0.05) -- (3,-0.95) node [right] at (3,-0.5) {$g'_1$};
 \draw [dashed,->] (1,-0.05) -- (1,-0.95) node [right] at (1,-0.5) {$h\under g_1$};
 \draw [->] (0,-0.05) -- (0,-0.95) node [left] at (0,-0.5) {$g_1\phantom{'}$};
\end{tikzpicture}
\end{minipage}
\hfill
\begin{minipage}{0.41\textwidth}
\begin{tikzpicture}[scale=1.2]
 \draw [dashed,->] (0,-1) -- (0.95,-1) node [below] at (0.5,-1) {$g_2\under h$};
 \draw [dashed,->] (1.05,-1) -- (2.95,-1) node [below] at (2,-1) {$h_2$};
 \draw [->] (0.05,0) -- (0.95,0) node [above] at (0.5,0) {$h$};
 \draw [dashed,->] (1.05,0) -- (2.95,0) node [above] at (2,0) {$\bar h$};
 \draw [->] (3,-0.05) -- (3,-0.95) node [right] at (3,-0.5) {$g'_2$};
 \draw [dashed,->] (1,-0.05) -- (1,-0.95) node [right] at (1,-0.5) {$h\under g_2$};
 \draw [->] (0,-0.05) -- (0,-0.95) node [left] at (0,-0.5) {$g_2\phantom{'}$};
\end{tikzpicture}
\end{minipage}
\hfill

As $g_1\not\prefix D_H$, we have $D_H \join g_1 = g_1 D_H = D_H g'_1 = h\big((h\under g_1)\join \bar h\big)$.
Likewise, $D_H \join g_2 = g_2 D_H = D_H g'_2 = h\big((h\under g_2)\join \bar h\big)$, so $h\under g_1=h\under g_2$ implies $g_1=g_2$ (and $g'_1=g'_2$), so the map $g \mapsto h\under g$ is injective.
As $\calG$ is finite, the map is a bijection.

The argument for the map $h \mapsto g\under h$ for fixed $g\in\calG$ is analogous.
\medskip

\begin{claim*}
For $x\in M$, there exist $g_1,g_2\in G$ and $h_1,h_2\in H$ such that one has $g_1h_1=x=h_2g_2$, that is,
$GH$-decompositions and $HG$-decompositions exist.
\end{claim*}
Given $x\in M$, consider any expression for $x$ as a product of atoms of $M$.
By the previous claim, we can move all atoms in either $\calG$ or in $\calH$ to the front of the word, using identities of the form $g(g\under h) = h(h\under g)$ with $g,(h\under g)\in\calG$ and $h,(g\under h)\in\calH$.
\medskip

\begin{claim*}
$GH$-decompositions and $HG$-decompositions are unique.
\end{claim*}
Consider $g\in G$ and $h\in H$ and let $N := ||g||_{\calG}<\infty$.
Since $D_G\calG = \calG D_G$ holds, and as for $a\in\Atoms$ we have $a\prefix D_G$ if and only if $a\in\calG$, one has $g\prefix D_G^N$.
Similarly, $h\prefix D_H^M$ for some~$M$, and thus $D_G^N\meet h\prefix D_G^N\meet D_H^M=\id$.
Writing $D_G^N = g\overline{g}$, we have $g\prefix D_G^N\meet (gh) = g(\overline{g}\meet h) \prefix g(D_G^N\meet h) = g$, so $D_G^N\meet (gh) = g$.
Hence, if $g_1,g_2\in G$ and $h_1,h_2\in H$ are such that $g_1h_1=g_2h_2$, then for $N := \max\{||g_1||_{\calG},||g_2||_{\calG}\}$ one has $g_1 = D_G^N\meet (g_1h_1) = D_G^N\meet (g_2h_2) = g_2$ and, by cancellativity of $M$, then $h_1=h_2$.

Analogously, $D_G^N\rightmeet (hg) = g$ yields the uniqueness of $HG$-decompositions.
\medskip

\noindent
Thus, one has $M=G\zs H$.
\end{proof}


\section{\ZS{} Garside structures} \label{S:Garside-ZS}

We have seen that decomposing a Garside element of a Garside monoid $K=G\zs H$
gives Garside elements for the factors, but that not every pair of Garside elements of the factors can be obtained in this way; cf.\ \autoref{E:SemidirectProduct}.  Clearly one cannot hope to relate the normal form in~$K$ of an element to the normal forms in~$G$ and~$H$ of the factors in its $GH$- and $HG$-decompositions, unless the Garside elements of~$K$,~$G$ and~$H$ are related.
In light of this remark we make the following definition:
\begin{definition}\label{D:ZS-GarsideStructure}
  The tuple $(\Delta_K, \Delta_G, \Delta_H)$ is a \emph{\ZS{} Garside structure} for the \ZS{} product $K=G\zs H$, if:
  \begin{enumerate}[label=(\alph*)]
   \item $K$ is a Garside monoid (and hence~$G$ and~$H$ are also Garside monoids);
   \item $\Delta_K$, $\Delta_G$, $\Delta_H$ are Garside elements for~$K$, $G$, $H$, respectively; and
   \item $\Delta_K = \Delta_G\Delta_H$ holds.
  \end{enumerate}
\end{definition}

\begin{remark}
 The proof of \autoref{ZS-parabolic} shows that in the situation of~\autoref{D:ZS-GarsideStructure},
 one has $ \Delta_G\Delta_H = \Delta_K = \Delta_H\Delta_G$.
\end{remark}

\subsection{Actions and the lattice structures}\label{S:ZappaSzep-Lattice}
A \ZS{} Garside structure allows to describe the lattice structure of the product in terms of the lattice structures of the factors.
By~\cite[Proposition~3.12]{Picantin01}, the lattice of simple elements of the product is the product of the lattices of the simple elements of the factors.
To be able to describe normal forms in \autoref{S:ZappaSzep-NF}, we need to analyse how the actions of the factors on each other interact with the lattice structures; this is the content of this section.

\begin{theorem}
  Suppose $(\Delta_K, \Delta_G, \Delta_H)$ is a \ZS{} Garside structure for the \ZS{} product $K=G\zs H$.
  For all $g \in G$ and $h\in H$, one has
  \begin{align*}
    {\inf}_K(g \join h) &= \min\big({\inf}_G(g), {\inf}_H(h)\big) \\
    {\sup}_K(g \join h) &= \max\big({\sup}_G(g), {\sup}_H(h)\big) \;,
  \end{align*}
  where ${\inf}_K$, ${\inf}_G$, ${\inf}_H$ and ${\sup}_K$, ${\sup}_G$, ${\sup}_H$ denote the infimum respectively the supremum with respect to the Garside structures of $K$, $G$ and $H$ given by $\Delta_K$, $\Delta_G$ and $\Delta_H$, respectively.
\end{theorem}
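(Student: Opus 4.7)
The plan is to reduce both equalities to the poset isomorphism of \autoref{ZS-lattice-isomorphism} by first identifying $\Delta_K^i$ with $\Delta_G^i \join \Delta_H^i$ for every $i \in \NN$.

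First I would establish the key identity $\Delta_K^i = \Delta_G^i \Delta_H^i = \Delta_G^i \join \Delta_H^i$. Since $\Delta_K = \Delta_G \Delta_H = \Delta_H \Delta_G$ by the remark following \autoref{D:ZS-GarsideStructure}, an easy induction gives $\Delta_K^i = \Delta_G^i \Delta_H^i$, with $\Delta_G^i$ and $\Delta_H^i$ commuting. Interpreting this commutation through \autoref{D:actions} yields $(\Delta_G^i)^{-1} \Leftacts \Delta_H^i = \Delta_H^i$, so that \autoref{ZS-lcm} identifies $\Delta_G^i \join \Delta_H^i$ with $\Delta_G^i \Delta_H^i = \Delta_K^i$.

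Next I would translate the defining inequalities of $\inf_K$ and $\sup_K$ through the poset isomorphism of \autoref{ZS-lattice-isomorphism}. Using $\Delta_K^i = \Delta_G^i \join \Delta_H^i$, the relation $\Delta_K^i \prefix_K g \join h$ becomes $\Delta_G^i \join \Delta_H^i \prefix_K g \join h$, which by the isomorphism is equivalent to the conjunction $\Delta_G^i \prefix_G g \text{ and } \Delta_H^i \prefix_H h$. Taking the largest such $i$ yields
\[
  {\inf}_K(g \join h) = \min\bigl({\inf}_G(g), {\inf}_H(h)\bigr) .
\]
Symmetrically, for the supremum statement I would write $\Delta_K^i$ as $\Delta_G^i \rightjoin \Delta_H^i$ (arguing as above, using the $\suffix$-variant of \autoref{ZS-lcm} together with the commutation $\Delta_G^i \Delta_H^i = \Delta_H^i \Delta_G^i$), and then apply the poset isomorphism $(G,\prefix) \times (H,\prefix) \to (K,\prefix)$ to $g \join h \prefix_K \Delta_G^i \join \Delta_H^i$, which is equivalent to $g \prefix_G \Delta_G^i$ and $h \prefix_H \Delta_H^i$; taking the smallest such $i$ gives the claim.

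The only non-routine step is the identity $\Delta_K^i = \Delta_G^i \join \Delta_H^i$; once that is in hand, both formulas follow mechanically from the componentwise nature of the isomorphism in \autoref{ZS-lattice-isomorphism}.
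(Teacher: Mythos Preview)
Your argument is correct and follows essentially the same route as the paper: both proofs reduce to \autoref{ZS-lattice-isomorphism} via the identification $\Delta_K^\ell = \Delta_G^\ell \join \Delta_H^\ell$; you simply make that identity explicit (via the commutation $\Delta_G\Delta_H=\Delta_H\Delta_G$ and \autoref{ZS-lcm}), whereas the paper invokes it tacitly. One minor remark: the detour through $\rightjoin$ in your supremum argument is unnecessary, since $g\join h \prefix_K \Delta_K^i = \Delta_G^i \join \Delta_H^i$ is already handled directly by the $\prefix$-isomorphism of \autoref{ZS-lattice-isomorphism}.
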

\begin{proof}
  The number ${\inf}_K(g \join h)$ is the largest integer $\ell$ such that $\Delta_K^\ell
  \prefix g \join h$.  By \autoref{ZS-lattice-isomorphism}, this is
  equal to the largest integer $\ell$ such that $\Delta_G^\ell \prefix g$ and
  $\Delta_H^\ell \prefix h$, which is the minimum of ${\inf}_G(g)$ and ${\inf}_H(h)$.

  Similarly, ${\sup}_K(g \join h)$ is the smallest integer $\ell$ such that
  $g \join h \prefix \Delta_K^\ell$.  This is equal to the smallest integer $\ell$
  such that $g \prefix \Delta_G^\ell$ and $h \prefix \Delta_H^\ell$, which
  is the maximum of ${\sup}_G(g)$ and ${\sup}_H(h)$.
\end{proof}

\begin{lemma} \label{actions-preserve-Delta}
  Suppose $(\Delta_K, \Delta_G, \Delta_H)$ is a \ZS{} Garside structure for the \ZS{} product $K=G\zs H$.
  For all $g \in G$, $h \in H$, the following identities hold:
  \begin{align*}
    h \leftacts \Delta_G &= \Delta_G &
    \Delta_H \rightacts g &= \Delta_H \\
    g \Leftacts \Delta_H &= \Delta_H &
    \Delta_G \Rightacts h &= \Delta_G 
  \end{align*}
\end{lemma}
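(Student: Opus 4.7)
By \autoref{transforming-expressions}, the symmetries $\sigma$ and $\tau$ render the four identities equivalent, so it suffices to prove one, say $h \leftacts \Delta_G = \Delta_G$ for all $h \in H$. The plan is to establish this first for $h \in \Simples_H$ and then extend to arbitrary $h \in H$ by induction: since $H$ is a Garside monoid by~\autoref{ZS-parabolic}, every $h \in H$ factors as $h = h_1 \cdots h_k$ with $h_i \in \Simples_H$, and because $\leftacts$ is a left action (\autoref{ZS-actions}), the identity propagates from the generators.

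For $h \in \Simples_H$, the crux is to show that $h \Delta_G \in \Simples_K$. I would use balancedness: since $\Delta_H$ is balanced in $H$ and $h \prefix_H \Delta_H$, we also have $h \suffix_H \Delta_H$, so $\Delta_H = h' h$ for some $h' \in H$. By~\autoref{PosetsWellDefined} this equality lifts to $K$, and right-multiplying by $\Delta_G$ gives $\Delta_K = \Delta_H \Delta_G = h' h \Delta_G$. Thus $h \Delta_G$ is a suffix of $\Delta_K$ in $K$, and by balancedness of $\Delta_K$ it is also a prefix, i.e.\ $h \Delta_G \in \Simples_K$.

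From here the conclusion is a direct application of the lattice machinery of~\autoref{S:Actions}. Write $h \Delta_G = (h \leftacts \Delta_G) \join h$ via~\autoref{ZS-lcm}, using that $h \Delta_G$ is already in $HG$-form while its $GH$-decomposition is $(h \leftacts \Delta_G)(h \rightacts \Delta_G)$; similarly, from $\Delta_K = \Delta_G \Delta_H = \Delta_H \Delta_G$ one identifies $\Delta_K = \Delta_G \join \Delta_H$. Applying~\autoref{ZS-lattice-isomorphism} to $h \Delta_G \prefix_K \Delta_K$ then yields $h \leftacts \Delta_G \prefix_G \Delta_G$. Finally, by~\autoref{action-bijections} the map $h \leftacts \cdot$ is a bijection of $G$, and by~\autoref{action-isomorphism} it is a poset isomorphism of $(G, \prefix)$; order-preservation together with $h \leftacts \Delta_G \prefix \Delta_G$ forces the restriction of $h \leftacts \cdot$ to $\Simples_G$ to land in $\Simples_G$, and by injectivity and finiteness this restriction is a poset automorphism of $\Simples_G$ that must fix its unique $\prefix$-maximum $\Delta_G$.

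The main obstacle is the step $h \Delta_G \in \Simples_K$: a naive attempt starting from $h \prefix_K \Delta_H$ and right-multiplying by $\Delta_G$ fails because right multiplication does not in general preserve the prefix order. The route via suffixes combined with the balancedness of $\Delta_K$ supplies the essential asymmetric input; once past this step, everything else reduces to the bijection and lattice properties of the actions already developed. The remaining three identities then follow from the one just proven by the symmetries of~\autoref{transforming-expressions}.
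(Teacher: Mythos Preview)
Your argument is correct, and the overall shape is close to the paper's: both reduce to simple $h$ and both establish $h\Delta_G\in\Simples_K$ by the same suffix/balancedness trick (the paper writes $(\rightcomplement_H h)\,h\,\Delta_G=\Delta_K$, which is exactly your $h'h\Delta_G=\Delta_K$). The divergence is in the final step. The paper produces the reverse inequality $\Delta_G \prefix h\leftacts\Delta_G$ directly, by exhibiting an explicit element
\[
  x=(\Delta_G^{-1}\Leftacts h)\,\complement_G(h^{-1}\leftacts\Delta_G)
\]
and checking $\Delta_G x = h\Delta_G$; together with $h\leftacts\Delta_G\in\Simples_K\cap G$ (hence $\prefix\Delta_G$ by the parabolic structure) this pins down $h\leftacts\Delta_G=\Delta_G$. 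You instead read off $h\leftacts\Delta_G\prefix\Delta_G$ from \autoref{ZS-lattice-isomorphism} and then argue that the order-automorphism $h\leftacts\cdot$ of $G$ restricts to an injective, hence bijective, order-preserving self-map of the finite set $\Simples_G$, which must fix the unique maximum $\Delta_G$.

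Your route is cleaner in that it avoids the explicit computation with $x$, but it genuinely uses $|\Simples_G|<\infty$; the paper's computation gives both inequalities without a counting step and would survive in a quasi-Garside setting. Either way, the key non-obvious input is the same: getting $h\Delta_G\prefix\Delta_K$ via suffixes and balancedness, exactly as you identified.
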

\begin{proof}
  As $\leftacts$ is an action, we can assume that $h$ is a simple
  element.  Suppose $h \Delta_G = g' h'$, i.e.\ $g' = h \leftacts
  \Delta_G$.  Observe that $(\rightcomplement_H h) h \Delta_G =
  \Delta_K$, so $\Delta_K \suffix g' h'$ and, in particular, $g'$ is a
  simple element.  Also, $h \join \Delta_G = h (h^{-1}
  \leftacts\Delta_G)\in\Simples$ and thus $h^{-1} \leftacts
  \Delta_G\in\Simples$.
  For $x = (\Delta_G^{-1} \Leftacts h) \complement_G (h^{-1} \leftacts
  \Delta_G)$ we have
  \begin{align*}
    \Delta_G x &= \Delta_G (\Delta_G^{-1} \Leftacts h) 
                  \complement_G (h^{-1} \leftacts \Delta_G) \\
               &= h (\Delta_G \Rightacts (\Delta_G^{-1} \Leftacts h))
                  \complement_G (h^{-1} \leftacts \Delta_G) 
                   \\
               &= h (h^{-1} \leftacts \Delta_G)
                  \complement_G (h^{-1} \leftacts \Delta_G)
                  &&\text{by \autoref{action-inverse}} \\
               &= h \Delta_G \;.
  \end{align*}
  Hence $\Delta_G \prefix h \Delta_G = g' h'$ and so $\Delta_G \prefix
  g'$ which implies that $g' = \Delta_G$.
  The other claims are shown in the same way.
\end{proof}

\begin{corollary}\label{ActionsMapSimplesToSimples}
 Suppose $(\Delta_K, \Delta_G, \Delta_H)$ is a \ZS{} Garside structure for the \ZS{} product $K=G\zs H$.
 \smallskip
 
 \noindent
 For all $h \in H$, one has
 \begin{align*}
  g \in \Simples_G
    &\Longleftrightarrow h \leftacts g \in \Simples_G
     \Longleftrightarrow h^{-1} \leftacts g \in \Simples_G \\
    &\Longleftrightarrow g \Rightacts h \in \Simples_G
     \Longleftrightarrow g \Rightacts h^{-1} \in \Simples_G
     \;.
 \end{align*}
 For all $g \in G$, one has
 \begin{align*}
  h \in \Simples_H
    &\Longleftrightarrow h \rightacts g \in \Simples_H
     \Longleftrightarrow h \rightacts g^{-1} \in \Simples_H \\
    &\Longleftrightarrow g \Leftacts h \in \Simples_H
     \Longleftrightarrow g^{-1} \Leftacts h \in \Simples_H
     \;.
 \end{align*}
\end{corollary}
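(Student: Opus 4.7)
The plan is to reduce the statement to the two ingredients already established: \autoref{actions-preserve-Delta}, which says that $\leftacts$, $\rightacts$, $\Leftacts$, $\Rightacts$ (and therefore also their inverses) fix $\Delta_G$ and $\Delta_H$; and \autoref{action-isomorphism}, which says that the left actions are order isomorphisms with respect to the prefix order and the right actions are order isomorphisms with respect to the suffix order. Since $\Delta_G$ is a Garside element of~$G$ (and hence balanced), one has the chain of equivalences
\[
   g\in\Simples_G
   \;\Longleftrightarrow\; g\prefix \Delta_G
   \;\Longleftrightarrow\; g\suffix \Delta_G ,
\]
so membership in $\Simples_G$ can be tested either in the prefix or in the suffix order, and the completely analogous statement holds in~$H$.

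Given this, each of the four equivalences in the first group of the statement reduces to a one-line computation. For instance, applying the poset isomorphism $g\mapsto h\leftacts g$ of $(G,\prefix)$ from \autoref{action-isomorphism} and the identity $h\leftacts \Delta_G=\Delta_G$ from \autoref{actions-preserve-Delta}, we obtain
\[
   g\prefix \Delta_G
    \;\Longleftrightarrow\; h\leftacts g \prefix h\leftacts \Delta_G = \Delta_G,
\]
and hence $g\in\Simples_G\iff h\leftacts g\in\Simples_G$. Replacing $h\leftacts\cdot$ by its inverse $h^{-1}\leftacts\cdot$ (also a poset isomorphism of $(G,\prefix)$, with $h^{-1}\leftacts\Delta_G=\Delta_G$) gives the equivalence $g\in\Simples_G\iff h^{-1}\leftacts g\in\Simples_G$. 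For the remaining two equivalences we use that $g\mapsto g\Rightacts h$ and its inverse $g\mapsto g\Rightacts h^{-1}$ are isomorphisms of $(G,\suffix)$ by \autoref{action-isomorphism}, both fixing $\Delta_G$ by \autoref{actions-preserve-Delta}; applied to the characterisation $g\in\Simples_G\iff g\suffix\Delta_G$, this yields $g\in\Simples_G\iff g\Rightacts h\in\Simples_G\iff g\Rightacts h^{-1}\in\Simples_G$.

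The second group of equivalences, concerning simple elements of~$H$, follows by the symmetry provided by \autoref{transforming-expressions}: swapping the roles of~$G$ and~$H$ turns $\leftacts$ into $\Leftacts$ and $\Rightacts$ into $\rightacts$, and the same argument applies with $\Delta_H$ in place of $\Delta_G$. There is no genuine obstacle here; the only thing to be slightly careful about is to verify that the \emph{inverses} of the actions also fix $\Delta_G$ and $\Delta_H$, which is immediate from the identities $h\leftacts\Delta_G=\Delta_G$ and $\Delta_G\Rightacts h=\Delta_G$ in \autoref{actions-preserve-Delta} by applying the corresponding inverse map to both sides.
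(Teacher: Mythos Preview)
Your argument is correct and follows essentially the same route as the paper: both rely on \autoref{actions-preserve-Delta} together with the order-preservation of the actions, the only difference being that the paper cites the raw ingredients \autoref{ZS-actions} and \autoref{action-inverse-2} while you invoke the packaged \autoref{action-isomorphism} (whose proof uses exactly those lemmas). One small notational slip: in this paper's convention $y\suffix x$ means that $x$ is a suffix of~$y$, so the suffix characterisation of simples should read $\Delta_G\suffix g$ rather than $g\suffix\Delta_G$; the argument is unaffected.
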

\begin{proof}
 The claim follows from \autoref{actions-preserve-Delta} with \autoref{ZS-actions} and \autoref{action-inverse-2}.
\end{proof}

\begin{lemma} \label{complement-action}
  Suppose $(\Delta_K, \Delta_G, \Delta_H)$ is a \ZS{} Garside structure for the \ZS{} product $K=G\zs H$.
  \smallskip
 
  \noindent
  For all $g \in \Simples_G$ and $h \in H$, one has
  \begin{align*}
    \complement_G (h\leftacts g) = (h\rightacts g) \leftacts \complement_G g \qquad\text{and}\qquad
    \complement_G (g\Rightacts h) = h^{-1} \leftacts \complement_G g \;.
   \end{align*}
  For all $g \in G$ and $h \in \Simples_H$, one has
  \begin{align*}
    \complement_H (g\Leftacts h) = (g\Rightacts h) \Leftacts \complement_H h \qquad\text{and}\qquad
    \complement_H (h\rightacts g) = g^{-1} \Leftacts \complement_H h \;.
   \end{align*}
\end{lemma}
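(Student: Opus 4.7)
The plan is to prove the two identities for $\complement_G$ by direct computation, starting in both cases from the defining equation $g\cdot \complement_G g=\Delta_G$ and exploiting \autoref{actions-preserve-Delta} together with the uniqueness of $GH$-decompositions; the two identities for $\complement_H$ then follow formally via the symmetry $\sigma$ of \autoref{transforming-expressions}.

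For the first identity, I would start from $hg\complement_G g=h\Delta_G$. The left-hand side becomes
\[
  hg\complement_G g=(h\leftacts g)(h\rightacts g)\complement_G g
   =(h\leftacts g)\bigl((h\rightacts g)\leftacts \complement_G g\bigr)\bigl((h\rightacts g)\rightacts \complement_G g\bigr),
\]
which is in $GH$-form since the first two factors lie in $G$ and the last in $H$. On the other hand, \autoref{actions-preserve-Delta} gives $h\leftacts \Delta_G=\Delta_G$, so the $GH$-decomposition of $h\Delta_G$ is $\Delta_G\cdot(h\rightacts \Delta_G)$. Equating the $G$-parts and using uniqueness of $GH$-decompositions yields the first identity.

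For the second identity, the key idea is to let $z := h^{-1}\leftacts \complement_G g$ (which makes sense as $G$ and $H$ act on each other by bijections, by \autoref{action-bijections}) and to show that $(g\Rightacts h)\,z=\Delta_G$. I would write $ghz$ in two ways: using $h\leftacts z=\complement_G g$ and $g\complement_G g=\Delta_G$,
\[
   ghz=g(h\leftacts z)(h\rightacts z)=\Delta_G(h\rightacts z),
\]
which is in $GH$-form; using $gh=(g\Leftacts h)(g\Rightacts h)$ and letting $w:=(g\Rightacts h)z\in G$,
\[
   ghz=(g\Leftacts h)\,w =\bigl((g\Leftacts h)\leftacts w\bigr)\bigl((g\Leftacts h)\rightacts w\bigr),
\]
which is also in $GH$-form. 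Uniqueness of $GH$-decompositions gives $(g\Leftacts h)\leftacts w=\Delta_G$; since $(g\Leftacts h)\leftacts \Delta_G=\Delta_G$ by \autoref{actions-preserve-Delta} and the map $x\mapsto (g\Leftacts h)\leftacts x$ is a bijection by \autoref{action-bijections}, this forces $w=\Delta_G$, i.e.\ $(g\Rightacts h)z=\Delta_G$, as required.

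Finally, applying the symmetry $\sigma$ from \autoref{transforming-expressions} (swap $G\leftrightarrow H$, $\leftacts\leftrightarrow \Leftacts$, $\rightacts\leftrightarrow \Rightacts$, and hence $\complement_G\leftrightarrow \complement_H$) turns the two identities just proved into the corresponding identities for $\complement_H$. The only subtle point in the argument above is recognising the correct form of the two $GH$-decompositions in the second identity; once $z$ is chosen correctly everything else is a short manipulation using the action rules from Section~\ref{S::ActionProperties}.
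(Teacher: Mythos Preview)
Your proof is correct and shares the same core ingredients as the paper's: the action-on-products formula, \autoref{actions-preserve-Delta}, and bijectivity of the actions (\autoref{action-bijections}). The organisation differs slightly: the paper proves the first and \emph{fourth} identities directly and then invokes \autoref{transforming-expressions} for the remaining two, whereas you prove the first and second directly and use $\sigma$ for the other pair. For the first identity the arguments are identical up to phrasing (the paper applies the action-on-products formula directly rather than going through uniqueness of $GH$-decompositions). For the ``inverse'' identity, the paper's route is a bit shorter: it applies $(h\leftacts g)\Leftacts\,\cdot$ to the candidate product $(h\rightacts g)(g^{-1}\Leftacts\complement_H h)$ and collapses it to $\Delta_H$ using \autoref{left-right-identity}, then invokes \autoref{actions-preserve-Delta}; your argument instead compares two $GH$-decompositions of $ghz$. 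Both reach the same conclusion with the same tools, so there is no genuine gap---just a different, and perfectly valid, choice of which pair to treat by hand.
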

\begin{proof}
  Consider the following.
  \begin{align*}
    (h \leftacts g) \left( (h \rightacts g) \leftacts \complement_G g \right)
      & = h \leftacts (g \complement_G g) 
         &&\text{by \autoref{ZS-actions}} \\
      & = h \leftacts \Delta_G \\
      & = \Delta_G &&\text{by \autoref{actions-preserve-Delta}}
  \end{align*}
  Hence $\complement_G (h \leftacts g) = (h \rightacts g) \leftacts
  \complement_G g$. 

  For the right action we have the following.
  \begin{align*}
    (h &\leftacts g) \Leftacts \big( 
      (h \rightacts g)(g^{-1} \Leftacts \complement_H h)
    \big) \\
    &= \big(
      (h \leftacts g) \Leftacts (h \rightacts g)
    \big) \Big(\big(
      (h \leftacts g) \Rightacts (h \rightacts g) 
    \big) \Leftacts (g^{-1} \Leftacts \complement_H h)\Big)
         &&\!\text{by \autoref{ZS-actions}} \\
    &= h \big(g \Leftacts (g^{-1} \Leftacts \complement_H h)\big)
         &&\!\text{by \autoref{left-right-identity}} \\
    &= h \complement_H h = \Delta_H
  \end{align*}
  So, by \autoref{actions-preserve-Delta}, $(h \rightacts g)(g^{-1}
  \Leftacts \complement_H h) = \Delta_H$, i.e.\ \mbox{$\complement_H (h
  \rightacts g) = (g^{-1} \Leftacts \complement_H h)$}.
  The remaining identities follow with \autoref{transforming-expressions}.
\end{proof}

\begin{lemma}\label{L:ZS-complement}
  Suppose $(\Delta_K, \Delta_G, \Delta_H)$ is a \ZS{} Garside structure for the \ZS{} product $K=G\zs H$.
  Then, for all $g \in \Simples_G$ and $h \in \Simples_H$, one has
  \[ \complement_K (g \join h) 
     = \complement_G (h^{-1} \leftacts g) \join
       \complement_H (g^{-1} \Leftacts h)\;. \]
\end{lemma}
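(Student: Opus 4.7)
The plan is to compute $\complement_K(g \join h) = (g \join h) \under \Delta_K$ by applying \autoref{L:ZS-under}, and then to identify the resulting join with the claimed formula using \autoref{complement-action} and \autoref{action-inverse}.

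First I would note that $\Delta_G \join \Delta_H = \Delta_K$: the remark following \autoref{D:ZS-GarsideStructure} gives $\Delta_K = \Delta_G \Delta_H = \Delta_H \Delta_G$, so $\Delta_G,\Delta_H \prefix \Delta_K$; on the other hand, \autoref{ZS-lcm} together with $\Delta_G^{-1} \Leftacts \Delta_H = \Delta_H$ (from \autoref{actions-preserve-Delta}) gives $\Delta_G \join \Delta_H = \Delta_G \Delta_H = \Delta_K$. Since $g \prefix_G \Delta_G$ and $h \prefix_H \Delta_H$, I may apply \autoref{L:ZS-under} with $g_1 = g$, $g_2 = \Delta_G$, $h_1 = h$, $h_2 = \Delta_H$. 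Writing $g' = h^{-1} \leftacts g$ and $h' = g^{-1} \Leftacts h$, and using that $g \under \Delta_G = \complement_G g$ and $h \under \Delta_H = \complement_H h$, this yields
\[
\complement_K(g \join h) = (g \join h) \under \Delta_K = \bigl(h'^{-1} \leftacts \complement_G g\bigr) \join \bigl(g'^{-1} \Leftacts \complement_H h\bigr).
\]

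It then remains to verify that $h'^{-1} \leftacts \complement_G g = \complement_G g'$ and symmetrically that $g'^{-1} \Leftacts \complement_H h = \complement_H h'$. For the first, \autoref{ActionsMapSimplesToSimples} gives $g' \in \Simples_G$, so \autoref{complement-action} applied to $g'$ yields $\complement_G(h \leftacts g') = (h \rightacts g') \leftacts \complement_G g'$. Now $h \leftacts g' = h \leftacts (h^{-1} \leftacts g) = g$, and, by \autoref{action-inverse}, $h \rightacts g' = h \rightacts (h^{-1} \leftacts g) = g^{-1} \Leftacts h = h'$. Hence $\complement_G g = h' \leftacts \complement_G g'$, and inverting the action of $h'$ gives the claim. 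The second identity follows by the left-right symmetric argument, applying \autoref{complement-action} to $h' \in \Simples_H$ and using the dual form of \autoref{action-inverse}.

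The proof is essentially an assembly of earlier lemmas; the only subtlety is choosing which action (or inverse action) to cancel at each step, but the shape of the expression produced by \autoref{L:ZS-under} forces the correct choice, so no serious obstacle is expected.
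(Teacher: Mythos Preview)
Your argument is correct, but it follows a different route from the paper's proof. The paper proceeds by direct verification: writing $g\join h = hg' = gh'$ with $g'=h^{-1}\leftacts g$ and $h'=g^{-1}\Leftacts h$, it multiplies $hg'$ on the right by $\complement_G g'\,(\Delta_G^{-1}\Leftacts\complement_H h)$ and checks, using \autoref{actions-preserve-Delta}, that the product equals $\Delta_K$; the symmetric computation starting from $gh'$ then exhibits both the $GH$- and the $HG$-decomposition of $\complement_K(g\join h)$, whence the join expression follows by \autoref{ZS-lcm}. Your proof instead applies the general formula of \autoref{L:ZS-under} with $(g_2,h_2)=(\Delta_G,\Delta_H)$ to obtain $\complement_K(g\join h)$ as a join, and then simplifies each factor via \autoref{complement-action} and \autoref{action-inverse}. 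The paper's approach is shorter and more self-contained, since it does not invoke the somewhat heavier \autoref{L:ZS-under}; your approach has the merit of showing that the lemma is really a specialisation of \autoref{L:ZS-under}, so nothing essentially new is happening.
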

\begin{proof}
  Suppose that $g \join h = gh' = hg'$.  Now, using \autoref{actions-preserve-Delta}, we have
  \begin{align*}
    h g' \complement_G g' (\Delta_G^{-1} \Leftacts \complement_H h)
    & = h \Delta_G (\Delta_G^{-1} \Leftacts \complement_H h)
      = h \complement_H h \Delta_G = \Delta_H \Delta_G = \Delta_K\;, \\
    \intertext{and, similarly,}
    g h' \complement_H h' (\Delta_H^{-1} \leftacts \complement_G g)
    & = g \Delta_H (\Delta_H^{-1} \leftacts \complement_G g)
      = g \complement_G g \Delta_H = \Delta_G \Delta_H = \Delta_K \;.
  \end{align*}
  Hence
  \begin{align*}
    \complement_K (g \join h)
    &= \complement_G g' (\Delta_G^{-1} \Leftacts \complement_H h)
     = \complement_H h' (\Delta_H^{-1} \leftacts \complement_G g)\;, \\
    \intertext{And thus,}
    \complement_K (g \join h)
    & = \complement_G g' \join \complement_H h'
      = \complement_G (h^{-1}\leftacts g) \join \complement_H (g^{-1} \Leftacts h)\;.
  \end{align*}
\end{proof}

\subsection{Normal forms and \ZS{} Garside structures} \label{S:ZappaSzep-NF}

We will show in this section that, with respect to the Garside elements in a \ZS{} Garside structure for $K=G\zs H$, the language of normal form words in the product $K$ can be described in terms of the Cartesian product of the languages of normal form words in the factors $G$ and $H$.

Recall that we write $x_1|x_2|\cdots|x_\ell$ to denote a word in (non-trivial) simple elements together with the proposition that this word is in left normal form, that we write $\calLbar$ for the language of words in normal form and $\calL$ for the language given by restricting the alphabet to the set of proper simple elements.

\begin{definition}
  The set of equations \eqref{action-on-products}, from
  \autoref{ZS-actions}, gives us a natural way to extend the actions
  on elements to actions on strings of elements.  We can define the
  actions recursively as follows: The actions take the empty string to
  the empty string, act on length one strings by acting on the
  element, and act on longer strings by
  \begin{align*}
    h \leftacts (g_1 \ldot g_2 \ldot \cdots \ldot g_\ell) &= 
      (h \leftacts g_1) \ldot \big(
        (h \rightacts g_1) \leftacts (g_2 \ldot g_3 \ldot \cdots \ldot g_\ell)
      \big) \;, \\
    (h_1 \ldot h_2 \ldot \cdots \ldot h_\ell) \rightacts g &= 
      \big(
        (h_1 \ldot h_2 \ldot \cdots \ldot h_{\ell-1}) \rightacts (h_\ell \leftacts g)
      \big) \ldot (h_\ell \rightacts g) \;, \\
    g \Leftacts (h_1 \ldot h_2 \ldot \cdots \ldot h_\ell) &=
      (g \Leftacts h_1) \ldot \big(
        (g \Rightacts h_1) \Leftacts (h_2 \ldot h_3 \ldot \cdots \ldot h_\ell)
      \big) \;, \\
    (g_1 \ldot g_2 \ldot \cdots \ldot g_\ell) \Rightacts h &= 
      \big(
        (g_1 \ldot g_2 \ldot \cdots \ldot g_{\ell-1}) \Rightacts (g_\ell \Leftacts h)
      \big) \ldot (g_\ell \Rightacts h) \;.
  \end{align*}
  Likewise, if $G$ and $H$ act on each other by bijections, we can extend the inverse actions to strings by
   \begin{align*}
    h^{-1} \leftacts (g_1 \ldot g_2 \ldot \cdots \ldot g_\ell) &= 
      (h^{-1} \leftacts g_1) \ldot \big(
        (g_1^{-1} \Leftacts h)^{-1} \leftacts (g_2 \ldot g_3 \ldot \cdots \ldot g_\ell)
      \big) \;, \\
    (h_1 \ldot h_2 \ldot \cdots \ldot h_\ell) \rightacts g ^{-1}&= 
      \big(
        (h_1 \ldot h_2 \ldot \cdots \ldot h_{\ell-1}) \rightacts (g \Rightacts h_\ell^{-1})^{-1}
      \big) \ldot (h_\ell \rightacts g^{-1}) \;, \\
    g^{-1} \Leftacts (h_1 \ldot h_2 \ldot \cdots \ldot h_\ell) &=
      (g^{-1} \Leftacts h_1) \ldot \big(
        (h_1^{-1} \leftacts g)^{-1} \Leftacts (h_2 \ldot h_3 \ldot \cdots \ldot h_\ell)
      \big) \;, \\
    (g_1 \ldot g_2 \ldot \cdots \ldot g_\ell) \Rightacts h^{-1} &= 
      \big(
        (g_1 \ldot g_2 \ldot \cdots \ldot g_{\ell-1}) \Rightacts (h \rightacts g_\ell^{-1})^{-1}
      \big) \ldot (g_\ell \Rightacts h^{-1}) \;.
  \end{align*}
 
  By \autoref{ZS-actions} and \autoref{action-inverse-2}, these actions on strings of elements commute
  with the multiplication map $g_1 \ldot g_2 \ldot \cdots \ldot g_\ell
  \mapsto g_1 g_2 \cdots g_\ell$.
\end{definition}

\begin{proposition}\label{P:ZS-NF}
  Suppose $(\Delta_K, \Delta_G, \Delta_H)$ is a \ZS{} Garside structure for the \ZS{} product $K=G\zs H$, and let
  $g_1, g_2 \in \Simples_G$ and $h_1, h_2 \in \Simples_H$.
  
  Then one has
  $\complement_K(g_1 \join h_1) \meet (g_2 \join h_2) = \id$ if and
  only if $\complement_G (h_1^{-1} \leftacts g_1) \meet g_2 = \id$ and
  $\complement_H (g_1^{-1} \Leftacts h_1) \meet h_2 = \id$.
  
  If, moreover, $g_2\neq\id$ and $h_2\neq\id$, then one has
  $(g_1 \join h_1)
  | (g_2 \join h_2)$ if and only if $(h_1^{-1} \leftacts g_1) | g_2$
  and $(g_1^{-1} \Leftacts h_1) | h_2$.
\end{proposition}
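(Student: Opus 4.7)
The plan is to deduce both equivalences by combining \autoref{L:ZS-complement} (which rewrites $\complement_K(g_1\join h_1)$ as a join of simples coming from $G$ and $H$) with the lattice isomorphism of \autoref{ZS-lattice-isomorphism} (which reduces meets in $K$ to componentwise meets in $G\times H$).

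Concretely, I would first set $g_1^{\ast} := \complement_G(h_1^{-1}\leftacts g_1)$ and $h_1^{\ast} := \complement_H(g_1^{-1}\Leftacts h_1)$. By \autoref{ActionsMapSimplesToSimples} these lie in $\Simples_G$ respectively $\Simples_H$, and \autoref{L:ZS-complement} gives $\complement_K(g_1\join h_1) = g_1^{\ast}\join h_1^{\ast}$. The first claimed equivalence is thus reduced to showing
\[
  (g_1^{\ast}\join h_1^{\ast}) \meet (g_2\join h_2) = \id
  \;\iff\;
  g_1^{\ast}\meet g_2 = \id \ \text{and}\ h_1^{\ast}\meet h_2 = \id.
\]
For this I would invoke \autoref{ZS-lattice-isomorphism}: the map $\Phi\from(G,\prefix_G)\times(H,\prefix_H)\to(K,\prefix_K)$, $(g,h)\mapsto g\join h$, is a poset isomorphism, and since all three monoids are Garside (by \autoref{ZS-parabolic}) and thus lattices, $\Phi$ automatically preserves meets. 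Using \autoref{PosetsWellDefined} to identify meets in $G$ and $H$ with the corresponding meets in $K$, this yields
\[
  (g_1^{\ast}\join h_1^{\ast}) \meet (g_2\join h_2)
   = (g_1^{\ast}\meet g_2) \join (h_1^{\ast}\meet h_2).
\]
Because $\Phi(\id,\id)=\id$ and $\Phi$ is injective, the right-hand side equals $\id$ precisely when both components equal $\id$, which is the first equivalence.

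For the second equivalence, recall that the notation $x\,|\,y$ abbreviates the statement that $x,y\in\Simples\setminus\{\id\}$ together with $\complement x\meet y = \id$. Under the hypotheses $g_2\neq\id$ and $h_2\neq\id$, injectivity of $\Phi$ gives $g_2\join h_2\neq\id$; conversely, once the right-hand conditions hold one has $h_1^{-1}\leftacts g_1\neq\id$, which via \autoref{action-on-id} together with the bijectivity of the action forces $g_1\neq\id$, and similarly $h_1\neq\id$, so $g_1\join h_1\neq\id$. Hence the non-triviality requirements on both sides agree, and the meet equivalence just proved supplies the rest. I do not expect a substantial obstacle here: the argument is essentially the observation that a poset isomorphism between lattices preserves meets, combined with the explicit formula for the $K$-complement in \autoref{L:ZS-complement}; the only care needed is the bookkeeping that every element in sight remains simple in the correct parabolic submonoid.
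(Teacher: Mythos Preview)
Your proof is correct and follows essentially the same approach as the paper: apply \autoref{L:ZS-complement} to rewrite $\complement_K(g_1\join h_1)$ and then use the lattice isomorphism of \autoref{ZS-lattice-isomorphism} to split the meet componentwise. Your handling of the second equivalence is slightly more elaborate than necessary---the paper simply observes that $s_1\,|\,s_2$ is equivalent to $\complement s_1\meet s_2=\id$ together with $s_2\neq\id$ (the non-triviality of $s_1$ being automatic from these two conditions), which reduces the second claim directly to the first without any separate bookkeeping for the first factors.
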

\begin{proof}
  By \autoref{ZS-lattice-isomorphism} and \autoref{L:ZS-complement}, we have 
  \[  \complement_K (g_1 \join h_1) \meet (g_2 \join h_2)
      = (\complement_G (h_1^{-1} \leftacts g_1) \meet g_2) \join
        (\complement_H (g_1^{-1} \Leftacts h_1) \meet h_2) \;, \]
  so $\complement_K(g_1 \join h_1) \meet (g_2 \join h_2) = \id$ if and
  only if $\complement_G (h_1^{-1} \leftacts g_1) \meet g_2 = \id$ and
  $\complement_H (g_1^{-1} \Leftacts h_1) \meet h_2 = \id$, so the first claim holds.
  
  The second claim follows, as for simple elements $s_1,s_2$ of any Garside monoid, one has $s_1|s_2$ if and only if $\complement s_1\meet s_2=\id$ and $s_2\neq\id$ by definition.
\end{proof}

\begin{corollary} \label{normal-form-criteria}
  Suppose $(\Delta_K, \Delta_G, \Delta_H)$ is a \ZS{} Garside structure for the \ZS{} product $K=G\zs H$, and let
  $g_1, g_2 \in \Simples_G$ and $h_1, h_2 \in \Simples_H$.
  
  Then the following hold:
    \begin{align*}
       \complement_K (g_1 h_1) \meet (g_2 h_2) = \id
         &\quad\Longleftrightarrow \quad
         \left\{ \begin{array}{r@{}c@{}l}
                     \complement_G (g_1 \Rightacts h_1) &\meet& g_2 = \id \\
                      &\text{and} \\
                     \complement_H h_1 &\meet& (g_2 \Leftacts h_2) = \id
                 \end{array} \right. \\[2ex]
      \complement_K (g_1 h_1) \meet (h_2 g_2) = \id
         &\quad\Longleftrightarrow \quad
         \left\{ \begin{array}{r@{}c@{}l}
                     \complement_G (g_1 \Rightacts h_1) &\meet& (h_2 \leftacts g_2) = \id \\
                      &\text{and} \\
                     \complement_H h_1 &\meet& h_2 = \id
                 \end{array} \right.
    \end{align*}
    \begin{align*}
       \complement_K (h_1 g_1) \meet (g_2 h_2) = \id
         &\quad\Longleftrightarrow \quad
         \left\{ \begin{array}{r@{}c@{}l}
                     \complement_G g_1 &\meet& g_2 = \id \\
                      &\text{and} \\
                     \complement_H (h_1 \rightacts g_1) &\meet& (g_2 \Leftacts h_2) = \id
                 \end{array} \right. \\[2ex]
       \complement_K (h_1 g_1) \meet (h_2 g_2) = \id
         &\quad\Longleftrightarrow \quad
         \left\{ \begin{array}{r@{}c@{}l}
                     \complement_G g_1 &\meet& (h_2 \leftacts g_2) = \id \\
                      &\text{and} \\
                     \complement_H (h_1 \rightacts g_1) &\meet& h_2 = \id
                 \end{array} \right.
    \end{align*}

  If, moreover, $g_2\neq\id$ and $h_2\neq\id$, then one has the following:
  \[ \begin{array}{c@{\quad \Longleftrightarrow \quad}r@{}l@{$\quad$and$\quad$}r@{}l}
    g_1 h_1 | g_2 h_2
      & (g_1 \Rightacts h_1) &| g_2
      & h_1 &| (g_2 \Leftacts h_2) \\
    g_1 h_1 | h_2 g_2 
      & (g_1 \Rightacts h_1)
      &| (h_2 \leftacts g_2) & h_1 &| h_2 \\
    h_1 g_1 | g_2 h_2 
      & g_1 &| g_2 & (h_1 \rightacts g_1)
      &| (g_2 \Leftacts h_2) \\
    h_1 g_1 | h_2 g_2
      & g_1 &| (h_2 \leftacts g_2)
      & (h_1 \rightacts g_1) &| h_2
  \end{array} \]
\end{corollary}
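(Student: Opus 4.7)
The plan is to reduce each of the four equivalences in the first block to \autoref{P:ZS-NF}, by rewriting each $GH$- or $HG$-decomposition in the form $g \join h$ using \autoref{ZS-lcm}. Concretely, for any $g\in G$ and $h\in H$, one has $gh = g \join (g \Leftacts h)$ and $hg = (h \leftacts g) \join h$, since by \autoref{ZS-lcm} both $g \join (g \Leftacts h)$ and $gh$ equal the unique $GH$-decomposition $g \cdot h$ (as $g^{-1} \Leftacts (g \Leftacts h) = h$), and analogously for $hg$. Moreover, \autoref{ActionsMapSimplesToSimples} ensures that the new simple elements $g \Leftacts h$ and $h \leftacts g$ that appear are again simple, so \autoref{P:ZS-NF} applies.

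Once the rewrites are in place, a direct application of \autoref{P:ZS-NF} reduces each case to a pair of $\meet$-conditions in $G$ and in $H$; the remaining task is to simplify the arguments of $\leftacts$ and $\Leftacts$ using \autoref{action-inverse}. For example, in the $g_1 h_1 \,|\, g_2 h_2$ case, setting $\tilde h_i = g_i \Leftacts h_i$ (so that $g_ih_i = g_i\join \tilde h_i$), \autoref{P:ZS-NF} yields
\[
  \complement_G(\tilde h_1^{-1} \leftacts g_1) \meet g_2 = \id
  \quad\text{and}\quad
  \complement_H(g_1^{-1} \Leftacts \tilde h_1) \meet \tilde h_2 = \id.
\]
Now $g_1^{-1} \Leftacts \tilde h_1 = g_1^{-1} \Leftacts (g_1 \Leftacts h_1) = h_1$, while \autoref{action-inverse} gives $(g_1 \Leftacts h_1)^{-1} \leftacts g_1 = g_1 \Rightacts h_1$, so $\tilde h_1^{-1}\leftacts g_1 = g_1\Rightacts h_1$; substituting and expanding $\tilde h_2 = g_2 \Leftacts h_2$ produces exactly the first pair of conditions in the statement. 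The three remaining cases are handled by the same procedure, with the appropriate combination of the rewrites $gh = g \join (g \Leftacts h)$ and $hg = (h \leftacts g)\join h$ on the two sides, and the corresponding simplifications $(h_1 \leftacts g_1)^{-1} \Leftacts h_1 = h_1\rightacts g_1$ and $h_1^{-1}\leftacts(h_1\leftacts g_1) = g_1$.

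For the second block of equivalences, note that for simple elements $s_1,s_2$ of any Garside monoid one has $s_1|s_2$ iff $\complement s_1 \meet s_2 = \id$ and $s_2 \neq \id$. Since the products $g_2 h_2$ and $h_2 g_2$ are nontrivial simple elements of $K$ exactly when both $g_2$ and $h_2$ are nontrivial (by \autoref{ActionsMapSimplesToSimples} and the uniqueness of $GH$- and $HG$-decompositions of~$\id$), each of the four equivalences for $\meet = \id$ translates, under the extra assumption $g_2 \neq \id \neq h_2$, into the corresponding normal-form equivalence.

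The only genuine work is the bookkeeping of which action-inverse identity to invoke in each of the eight places (four cases, two components each); no new ideas beyond \autoref{P:ZS-NF} and \autoref{action-inverse} are required.
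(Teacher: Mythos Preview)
Your proposal is correct and follows essentially the same route as the paper: rewrite each product as a join via $gh = g\join(g\Leftacts h)$ and $hg = (h\leftacts g)\join h$, apply \autoref{P:ZS-NF}, and simplify the resulting inverse-action expressions. The paper cites \autoref{left-right-identity} for the simplification step (e.g.\ $(g_1\Leftacts h_1)\leftacts(g_1\Rightacts h_1)=g_1$ directly gives $(g_1\Leftacts h_1)^{-1}\leftacts g_1 = g_1\Rightacts h_1$) where you invoke \autoref{action-inverse}, but these are equivalent here; for the second block the paper uses \autoref{action-on-id} to handle non-triviality, and note that your phrase ``exactly when both $g_2$ and $h_2$ are nontrivial'' should read ``whenever at least one is nontrivial'', though under the stated hypothesis $g_2\neq\id\neq h_2$ this does not affect the argument.
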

\begin{proof}
  The equivalences in the first list follow by \autoref{left-right-identity} and \autoref{P:ZS-NF} together with the fact that,
  for all $g \in G$ and $h \in H$, one has $g h = g \join (g \Leftacts h)$ and $h g = h \join (h \leftacts g)$.
  The equivalences in the second list then follow with \autoref{action-on-id}.
\end{proof}

\begin{remark}
\autoref{P:ZS-NF} and \autoref{normal-form-criteria} provide explicit translations, in both directions, between a deterministic finite state automaton accepting the regular language of normal form words over the alphabet~$\Simples_K^*$ and a pair of deterministic finite state automata accepting the regular languages of normal form words over the alphabets~$\Simples_G^*$ and~$\Simples_H^*$, respectively.
\end{remark}

\begin{proposition}\label{AlgorithmDecomposition}
  Suppose $(\Delta_K, \Delta_G, \Delta_H)$ is a \ZS{} Garside structure for the \ZS{} product $K=G\zs H$.
  \smallskip

  \noindent
  Given the normal form $g_1 h_1 | \cdots | g_m h_m \in \calLbar_K$ of $k\in K$ with $GH$-decomposition $k=gh$, the following algorithm computes the normal forms $\mathsf{Word}_G \in \calLbar_G$ of~$g$ and $\mathsf{Word}_H \in \calLbar_H$ of~$h$.
  
  \begin{algorithmic}[1]
    \STATE $\mathsf{Word}_H \leftarrow g_1 h_1 | g_2 h_2 | \cdots | g_m h_m$
    \STATE $\mathsf{Word}_G \leftarrow \epsilon$
    \REPEAT
    \STATE Write each simple factor of $\mathsf{Word}_H$ as a
      $GH$-decomposition, i.e.\\ $\qquad\mathsf{Word}_H = g'_1 h'_1 |
      g'_2 h'_2 | \cdots | g'_\ell h'_\ell$
    \IF{$g'_1 \ne \id$}
    \STATE $\mathsf{Word}_G \leftarrow \mathsf{Word}_G \ldot g'_1$
    \IF{$h'_\ell \ne \id$}
    \STATE $\mathsf{Word}_H \leftarrow h'_1 g'_2 | h'_2 g'_3 | \cdots | h'_{\ell-1} g'_\ell | h'_\ell$
    \ELSE
    \STATE $\mathsf{Word}_H \leftarrow h'_1 g'_2 | h'_2 g'_3 | \cdots | h'_{\ell-1} g'_\ell$
    \ENDIF
    \ENDIF
    \UNTIL{$g'_1 = \id$}
    \RETURN $(\mathsf{Word}_G, \mathsf{Word}_H)$
  \end{algorithmic}
\end{proposition}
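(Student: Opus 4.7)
The plan is to prove, by induction on the loop iterations, the following invariant: writing $g = g^G_1 g^G_2 \cdots g^G_m$ for the normal form of~$g$ in~$G$, at the start of iteration $p+1$ (for $0 \le p \le m$) one has $\mathsf{Word}_G = g^G_1 \ldot g^G_2 \ldot \cdots \ldot g^G_p \in \calLbar_G$, while $\mathsf{Word}_H \in \calLbar_K$ is the normal form in~$K$ of $(g^G_1 g^G_2 \cdots g^G_p)^{-1} k$. The base case $p=0$ holds by the initialisation. The inductive step I will split into showing that the $g'_1$ extracted in iteration $p+1$ equals $g^G_{p+1}$ (so that appending it extends $\mathsf{Word}_G$ to the first $p+1$ factors of~$g$'s normal form), and showing that the shifted $\mathsf{Word}_H$ remains in $\calLbar_K$.

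The identification $g'_1 = g^G_{p+1}$ will follow from \autoref{ZS-lattice-isomorphism}: the element represented by the current $\mathsf{Word}_H$ has $GH$-decomposition $(g^G_{p+1} \cdots g^G_m)\, h$, so under the lattice isomorphism it corresponds to $(g^G_{p+1} \cdots g^G_m, \bar h)$ for some $\bar h \in H$. Since this isomorphism preserves meets and $\Delta_K = \Delta_G \join \Delta_H$, the first simple factor of the normal form in~$K$ equals $g^G_{p+1} \join (\Delta_H \meet \bar h)$ (using that $g^G_{p+1}$ is the first simple factor of $g^G_{p+1} \cdots g^G_m$'s normal form in~$G$), whose $GH$-decomposition has $G$-part~$g^G_{p+1}$.

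The expected main obstacle is to verify that the shifted word remains in normal form in~$K$. A direct computation shows that its product equals $(g'_1)^{-1}$ times the original $\mathsf{Word}_H$. For the normal form conditions on the middle pairs, the fourth case of \autoref{normal-form-criteria} will reduce $(h'_i g'_{i+1}) | (h'_{i+1} g'_{i+2})$ to
\[
 \complement_G g'_{i+1} \meet (h'_{i+1} \leftacts g'_{i+2}) = \id
 \quad\text{and}\quad
 \complement_H(h'_i \rightacts g'_{i+1}) \meet h'_{i+1} = \id.
\]
These will follow from the corresponding normal form conditions on the old factors (the first case of \autoref{normal-form-criteria}), namely $\complement_G(g'_{i+1} \Rightacts h'_{i+1}) \meet g'_{i+2} = \id$ and $\complement_H h'_i \meet (g'_{i+1} \Leftacts h'_{i+1}) = \id$, by applying the bijections $h'_{i+1} \leftacts \cdot$ respectively $(g'_{i+1})^{-1} \Leftacts \cdot$, which preserve meets by \autoref{action-isomorphism}, together with the translation identities $h \leftacts \complement_G(g \Rightacts h) = \complement_G g$ and $g^{-1} \Leftacts \complement_H h = \complement_H(h \rightacts g)$ extracted from \autoref{complement-action}. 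The final pair $(h'_{\ell-1} g'_\ell) | h'_\ell$ (when $h'_\ell \ne \id$) will be handled analogously via the third case of \autoref{normal-form-criteria}, yielding the same condition $\complement_H(h'_{\ell-1} \rightacts g'_\ell) \meet h'_\ell = \id$.

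Termination will occur at iteration $m+1$, when the remaining element equals $h \in H$ and hence $g'_1 = \id$, exiting the loop. At this point $\mathsf{Word}_G$ is the full normal form of $g$ in~$G$, and $\mathsf{Word}_H \in \calLbar_K$ represents $h \in H$. Repeated application of \autoref{P:ZS-NF} (via the lattice-iso coordinates $(\id,\cdot)$) then shows inductively that all simple factors of $\mathsf{Word}_H$ must lie in~$H$ and that the normal form condition in~$K$ is equivalent to the normal form condition in~$H$, so $\mathsf{Word}_H$ is the normal form of~$h$ in~$H$, completing the proof.
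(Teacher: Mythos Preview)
Your proof is correct and far more detailed than the paper's, which is a single sentence: ``By \autoref{normal-form-criteria}, the returned words are in normal form.'' One minor slip: you write the $G$-normal form of $g$ as $g^G_1 \cdots g^G_m$ with exactly $m$ factors, but in fact only $\sup_G(g) \le m = \sup_K(k)$ is guaranteed. Your argument goes through verbatim with $m' := \sup_G(g)$ in place of $m$, termination occurring at iteration $m'+1$.

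The one place where you take a genuinely different route is in establishing that $\mathsf{Word}_G$ is in $G$-normal form. You go via the lattice isomorphism (\autoref{ZS-lattice-isomorphism}) to \emph{identify} each extracted $g'_1$ with the known factor $g^G_{p+1}$. The paper's intended (implicit) route is more direct and reuses the same manipulation you already perform for the $\mathsf{Word}_H$ shift: from $g'_1 h'_1 \mid g'_2 h'_2$ the first case of \autoref{normal-form-criteria} gives $\complement_G(g'_1 \Rightacts h'_1) \meet g'_2 = \id$, and applying $h'_1 \leftacts \cdot$ (using \autoref{action-isomorphism} and \autoref{complement-action}) yields $\complement_G g'_1 \meet (h'_1 \leftacts g'_2) = \id$, where $h'_1 \leftacts g'_2$ is precisely the $G$-part extracted in the next iteration. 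Your route buys an explicit identification of the factors and makes termination transparent; the paper's route is more uniform, treating $\mathsf{Word}_G$ and the shifted $\mathsf{Word}_H$ by the same mechanism.
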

\begin{proof}
By \autoref{normal-form-criteria}, the returned words are in normal form.
\end{proof}

\begin{proposition} \label{action-preserves-NF}
  Suppose $(\Delta_K, \Delta_G, \Delta_H)$ is a \ZS{} Garside structure for the \ZS{} product $K=G\zs H$, that
  $g_1 \ldot g_2 \ldot \cdots \ldot g_\ell$ is a word in $\Simples_G^*$ and that $h \in H$.
  Define \[ g'_1 \ldot g'_2 \ldot \cdots \ldot g'_\ell := h \leftacts (g_1 \ldot g_2 \ldot \cdots
  \ldot g_\ell) \;. \]
  
  For $i=1,\ldots\ell-1$, one has $\complement_G g_i \meet g_{i+1} = \id$ if and only if
  $\complement_G g'_i \meet g'_{i+1} = \id$.  In other words,
  $g_1 | g_2 | \cdots | g_\ell$ if and only if $g'_1 | g'_2 | \cdots | g'_\ell$.
\end{proposition}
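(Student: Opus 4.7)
The plan is to reduce the statement to the three lemmas \autoref{complement-action}, \autoref{action-isomorphism}, and \autoref{action-on-id}, via the recursive structure of the action on strings.

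First I would set up notation that unfolds the recursion. Put $h_1 := h$ and, for $i=1,\ldots,\ell-1$, set $h_{i+1} := h_i \rightacts g_i$. An easy induction using the recursive definition of $h \leftacts (g_1 \ldot \cdots \ldot g_\ell)$ shows that $g'_i = h_i \leftacts g_i$ for all~$i$; in particular, both $g'_i$ and $g'_{i+1}$ involve the element $h_{i+1}$, since $g'_{i+1} = h_{i+1} \leftacts g_{i+1}$.

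The key calculation is for the complement $\complement_G g'_i$. By \autoref{complement-action}, one has
\[
   \complement_G g'_i
    \;=\; \complement_G(h_i \leftacts g_i)
    \;=\; (h_i \rightacts g_i) \leftacts \complement_G g_i
    \;=\; h_{i+1} \leftacts \complement_G g_i,
\]
so that $\complement_G g'_i$ and $g'_{i+1}$ are the images of $\complement_G g_i$ and $g_{i+1}$, respectively, under the same map $h_{i+1} \leftacts \cdot$. By \autoref{action-isomorphism}, this map is a poset isomorphism of $(G,\prefix_G)$, hence it commutes with the $\prefix$-meet, yielding
\[
   \complement_G g'_i \meet g'_{i+1}
    \;=\; h_{i+1} \leftacts \bigl(\complement_G g_i \meet g_{i+1}\bigr).
\]
\autoref{action-on-id} then gives that the left-hand side equals $\id$ if and only if $\complement_G g_i \meet g_{i+1} = \id$, which is the first claim.

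For the reformulation in terms of normal forms, it remains to check that the property of all letters being non-trivial (and, if one works inside $\calL$ rather than $\calLbar$, non-equal to $\Delta_G$) is preserved. The first is immediate from \autoref{action-on-id}, and the second from \autoref{actions-preserve-Delta} combined with the injectivity of $h_i \leftacts \cdot$ established in \autoref{action-bijections}. I do not expect any serious obstacle: the only subtlety is getting the indexing of the $h_i$ right so that $\complement_G g'_i$ and $g'_{i+1}$ really are governed by the same action, and once this is noted the lemmas already in the paper do all the work.
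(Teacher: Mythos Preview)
Your proposal is correct and follows essentially the same route as the paper: both arguments use \autoref{complement-action} to write $\complement_G g'_i = (h_i \rightacts g_i)\leftacts \complement_G g_i$, then \autoref{action-isomorphism} to commute the action with $\meet$, and finally \autoref{action-on-id} to conclude. The paper phrases this as a length-$2$ case plus a reduction (with $h'_i := h \rightacts g_1\cdots g_{i-1}$, which coincides with your $h_i$), while you handle all indices at once; this is a purely organisational difference.
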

\begin{proof}
  First observe that, for $i=1,\ldots,\ell$, we have $g'_i=\id$ if and only if $g_i=\id$ by
  \autoref{action-on-id}.
  
  Now consider the case when $\ell = 2$. We have:
  \begin{align*}
    \complement_G g_1' \meet g_2' 
      &= \complement_G(h \leftacts g_1) \meet 
         \left((h \rightacts g_1) \leftacts g_2 \right) \\
      &= \left((h \rightacts g_1) \leftacts \complement_G g_1\right) \meet 
         \left((h \rightacts g_1) \leftacts g_2 \right)
         &&\text{by \autoref{complement-action}} \\
      &= (h \rightacts g_1) \leftacts \left(\complement_G g_1 \meet g_2 \right)
         &&\text{by \autoref{action-isomorphism}} 
  \end{align*}
  Hence, by \autoref{action-on-id}, $\complement_G g'_1 \meet g'_2 =
  \id$ if and only if $\complement_G g_1 \meet g_2 = \id$.  As $g'_2=\id$ if and only if $g_2=\id$, we have
  $g'_1 | g'_2$ if and only if $g_1 | g_2$ as desired.

  For the general case, if we let $h'_i = h \rightacts g_1 g_2 \cdots
  g_{i-1}$ then we have that $g'_i \ldot g'_{i+1} = h'_i \leftacts
  (g_i. g_{i+1})$.  So each length $2$ subword reduces to the $\ell = 2$
  case.
\end{proof}

\begin{corollary}
  Suppose $(\Delta_K, \Delta_G, \Delta_H)$ is a \ZS{} Garside structure for the \ZS{} product $K=G\zs H$.

  The actions on words fix setwise the languages $\calL_G$ and $\calL_H$.
\end{corollary}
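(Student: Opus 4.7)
The plan is to combine \autoref{action-preserves-NF} (which gives preservation of the normal form property at the word level, i.e.\ setwise invariance of $\calLbar_G$ and $\calLbar_H$) with an alphabet-level analysis showing that the actions restrict to bijections of the proper simples $\pSimples_G$ and $\pSimples_H$ onto themselves. Once we know each action is a bijection of $\pSimples$ and preserves the cascading condition $\complement x_i \meet x_{i+1} = \id$, the setwise invariance of $\calL_G$ and $\calL_H$ follows immediately.

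First, I would handle the map $g\mapsto h\leftacts g$ for fixed $h\in H$ as the model case. By \autoref{action-bijections} it is a bijection of $G$. By \autoref{action-on-id} it sends $\id$ to $\id$ (and only $\id$ to $\id$). By \autoref{ActionsMapSimplesToSimples} it restricts to a bijection $\Simples_G\to\Simples_G$. Finally, \autoref{actions-preserve-Delta} gives $h\leftacts \Delta_G=\Delta_G$, and combined with injectivity this means $\Delta_G$ is the unique preimage of $\Delta_G$. Hence the restriction to $\pSimples_G = \Simples_G\setminus\{\id,\Delta_G\}$ is a bijection of $\pSimples_G$ onto itself.

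Next, extending to words, \autoref{action-preserves-NF} tells us that if $g_1|\cdots|g_\ell$ is in normal form then so is $h\leftacts(g_1\ldot\cdots\ldot g_\ell)=g_1'\ldot\cdots\ldot g_\ell'$. Combined with the alphabet-level statement, each $g_i'$ lies in $\pSimples_G$ whenever each $g_i$ does, so the action maps $\calL_G^{(\ell)}$ into $\calL_G^{(\ell)}$ for every $\ell$. The inverse map $g\mapsto h^{-1}\leftacts g$ satisfies exactly the same hypotheses (via \autoref{action-inverse-2} and the inverse bijection clause of \autoref{action-bijections}), so it too preserves $\calL_G$, and hence $h\leftacts\cdot$ fixes $\calL_G$ setwise.

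Finally, the analogous statements for the three remaining actions $\cdot\Rightacts h$ on $G$-words and $g\Leftacts\cdot$, $\cdot\rightacts g$ on $H$-words follow by the same argument; alternatively one may invoke \autoref{transforming-expressions} to transport the result for $\leftacts$ to the other three operations. The only step requiring care is checking that $\Delta$ is neither created nor destroyed by the action, and this is precisely what \autoref{actions-preserve-Delta} together with bijectivity ensures, so no real obstacle arises.
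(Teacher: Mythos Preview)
Your proposal is correct and takes essentially the same approach as the paper: both combine \autoref{action-preserves-NF} with \autoref{actions-preserve-Delta} (plus bijectivity) to conclude that the actions preserve the normal form condition while neither creating nor destroying occurrences of~$\Delta$. The paper's proof compresses your alphabet-level analysis into the single observation that ``the initial power of~$\Delta$ in a word in normal form must be preserved by the actions'', which amounts to exactly the same thing as your more explicit verification that each action restricts to a bijection of~$\pSimples_G$ (respectively~$\pSimples_H$).
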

\begin{proof}
  This follows from \autoref{action-preserves-NF} as, by \autoref{actions-preserve-Delta}, the initial
  power of~$\Delta$ in a word in normal form must be preserved by the
  actions.
\end{proof}

\begin{lemma} \label{push-normal-form}
  Suppose $(\Delta_K, \Delta_G, \Delta_H)$ is a \ZS{} Garside structure for the \ZS{} product $K=G\zs H$.
  
  If $g_1, g_2, \ldots, g_\ell \in \Simples_G$ and
  $h, h_1, h_2, \ldots, h_\ell \in \Simples_H$ with $\complement_H h \meet (g_1 \Leftacts h_1) = \id$ and
  $g_1 h_1 | g_2 h_2 | \cdots | g_\ell h_\ell$ hold,
  then one has $h g_1 | h_1 g_2 | \cdots | h_{\ell-1} g_\ell$ and, moreover,
  $h g_1 | h_1 g_2 | \cdots | h_{\ell-1} g_\ell | h_\ell$ if $h_\ell\neq\id$.
\end{lemma}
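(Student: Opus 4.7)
My plan is to apply \autoref{normal-form-criteria} in both directions and translate the resulting elementary conditions via \autoref{complement-action} and \autoref{action-isomorphism}. Set $h_0 := h$. Unpacking the hypothesis that $g_1h_1|g_2h_2|\cdots|g_\ell h_\ell$ is in normal form via the first row of \autoref{normal-form-criteria} and combining with the additional hypothesis $\complement_H h_0 \meet (g_1 \Leftacts h_1) = \id$, we obtain the two families
\[ \complement_G(g_i \Rightacts h_i) \meet g_{i+1} = \id \quad (1\le i \le \ell-1) \]
and
\[ \complement_H h_{j-1} \meet (g_j \Leftacts h_j) = \id \quad (1 \le j \le \ell). \]

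Writing $w_j := h_{j-1}g_j$ for $1 \le j \le \ell$, each $w_j$ is a simple element of $K$ since $h_{j-1} g_j \prefix \Delta_H g_j \prefix \Delta_H \Delta_G = \Delta_K$, and the fourth row of \autoref{normal-form-criteria} reduces the target condition $w_1|w_2|\cdots|w_\ell$ to the two families
\[ \complement_G g_j \meet (h_j \leftacts g_{j+1}) = \id \quad\text{and}\quad \complement_H(h_{j-1} \rightacts g_j) \meet h_j = \id \]
for $1 \le j \le \ell-1$. The first family holds because \autoref{complement-action} gives $\complement_G(g_j \Rightacts h_j) = h_j^{-1} \leftacts \complement_G g_j$, so applying the $\prefix$-isomorphism $h_j \leftacts \cdot$ from \autoref{action-isomorphism} to the first hypothesis family yields exactly the required identity. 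The second family holds symmetrically: \autoref{complement-action} gives $\complement_H(h_{j-1} \rightacts g_j) = g_j^{-1} \Leftacts \complement_H h_{j-1}$, and applying the $\prefix$-isomorphism $g_j \Leftacts \cdot$ converts the required identity into the $j$-th instance of the second hypothesis family.

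For the optional final letter $h_\ell$ (when $h_\ell \ne \id$), the $GH$-decomposition of $h_{\ell-1}g_\ell$ is $(h_{\ell-1} \leftacts g_\ell)(h_{\ell-1} \rightacts g_\ell)$, so \autoref{ZS-lcm} identifies $h_{\ell-1}g_\ell$ with $(h_{\ell-1}\leftacts g_\ell) \join h_{\ell-1}$. Combining \autoref{L:ZS-complement} with the identity $(h_{\ell-1}\leftacts g_\ell)^{-1}\Leftacts h_{\ell-1} = h_{\ell-1}\rightacts g_\ell$ from \autoref{left-right-identity} yields $\complement_K(h_{\ell-1}g_\ell) = \complement_G g_\ell \join \complement_H(h_{\ell-1}\rightacts g_\ell)$. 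Meeting with $h_\ell = \id \join h_\ell$ and splitting coordinatewise via \autoref{ZS-lattice-isomorphism}, the requirement $\complement_K w_\ell \meet h_\ell = \id$ reduces to $\complement_H(h_{\ell-1}\rightacts g_\ell) \meet h_\ell = \id$, and this is equivalent by the same translation as above to the $j=\ell$ instance of the second hypothesis family.

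Finally, I would verify that each $w_j$ is non-trivial so that the output is a genuine word in $\calLbar$: if $h_{j-1}g_j = \id$, conicality of $K$ forces $h_{j-1} = g_j = \id$, whereupon $\complement_H h_{j-1} \meet (g_j \Leftacts h_j) = \Delta_H \meet h_j = h_j = \id$, contradicting that $g_j h_j$ is a letter of the given normal form. I expect the only real obstacle to be the bookkeeping -- tracking which row of \autoref{normal-form-criteria} and which direction of the action--complement compatibility to apply at each place -- after which every step is a direct substitution.
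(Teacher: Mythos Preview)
Your proof is correct and follows essentially the same route as the paper: unpack the hypotheses via \autoref{normal-form-criteria}, translate the resulting $G$- and $H$-conditions, and repack via \autoref{normal-form-criteria}. The only cosmetic difference is in the translation step: the paper establishes the word identities
\[
(g_i \Leftacts h_i) \leftacts \big((g_i \Rightacts h_i) \ldot g_{i+1}\big) = g_i \ldot (h_i \leftacts g_{i+1}),
\qquad
\big(h_{i-1} \ldot (g_i \Leftacts h_i)\big) \rightacts (g_i \Rightacts h_i) = (h_{i-1}\rightacts g_i) \ldot h_i
\]
via \autoref{left-right-identity} and then invokes \autoref{action-preserves-NF} as a black box, whereas you bypass \autoref{action-preserves-NF} and apply \autoref{complement-action} and \autoref{action-isomorphism} directly --- which is exactly the content of the proof of \autoref{action-preserves-NF}. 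Your treatment of the final letter $h_\ell$ via \autoref{ZS-lcm}, \autoref{L:ZS-complement} and \autoref{ZS-lattice-isomorphism} is correct but heavier than necessary: one can simply apply the fourth row of \autoref{normal-form-criteria} with second letter $h_\ell\cdot\id$, which reduces immediately to the $j=\ell$ instance of your second family. Your explicit non-triviality check for the letters $h_{j-1}g_j$ is a point the paper handles more tersely.
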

\begin{proof}
  If we let $h_0 = h$ and $g_{\ell+1} = \id$ then, by \autoref{normal-form-criteria}, the
  hypotheses imply
  \[
    \forall i \in \{1, \ldots, \ell\}, \qquad 
    \complement_G (g_i \Rightacts h_i) \meet g_{i+1} = \id
    \quad \text{and} \quad
    \complement_H h_{i-1} \meet (g_i \Leftacts h_i) = \id
    \;.
  \]
  Moreover, either $g_i\neq\id$ for $i=1,\ldots,\ell$, or $h_i\neq\id$ for $i=0,1,\ldots,\ell$.
  
  Now consider the following.
  \begin{align*}
    (g_i \Leftacts {}&h_i) \leftacts \big((g_i \Rightacts h_i) \ldot g_{i+1}\big) \\
    &= \big( (g_i \Leftacts h_i) \leftacts (g_i \Rightacts h_i) \big) .
       \big( (g_i \Leftacts h_i) \rightacts (g_i \Rightacts h_i) \big) 
         \leftacts g_{i+1} \\
    &= g_i \ldot (h_i \leftacts g_{i+1}) && \text{by \autoref{left-right-identity}}
  \end{align*}
  \begin{align*}
    \big(h_{i-1} \ldot (&g_i \Leftacts h_i)\big) \rightacts (g_i \Rightacts h_i) \\
    &= h_{i-1} \rightacts 
         \big((g_i \Leftacts h_i) \leftacts (g_i \Rightacts h_i)\big) .
       (g_i \Leftacts h_i) \rightacts (g_i \Rightacts h_i) \\
    &= (h_{i-1} \rightacts g_i) \ldot h_i && \text{by \autoref{left-right-identity}}
  \end{align*}
  So, by \autoref{action-preserves-NF}, we have for $i=1,\ldots,\ell-1$ that
  $\complement_G g_i \meet (h_i \leftacts g_{i+1}) = \id$ and
  $\complement_H (h_{i-1} \rightacts g_i) \meet h_i = \id$, which, using
  \autoref{normal-form-criteria}, implies the claim.
\end{proof}

\begin{proposition}\label{AlgorithmNF}
 Suppose $(\Delta_K, \Delta_G, \Delta_H)$ is a \ZS{} Garside structure for the \ZS{} product $K=G\zs H$.
  
 Given $g_1|\cdots|g_m \in \calLbar_G$ and $h_1|\cdots|h_n \in \calLbar_H$, the following algorithm computes the normal form of $g_1 \cdots g_m h_1 \cdots h_n$.

  \begin{algorithmic}[1]
    \STATE $\mathsf{Word}_G \leftarrow g_1 | g_2 | \cdots | g_m$
    \STATE $\mathsf{Word}_K \leftarrow h_1 | h_2 | \cdots | h_n$
    \WHILE{$\mathsf{Word}_G \ne \epsilon$} 
      \STATE $\mathsf{Word}_G \ldot g \leftarrow \mathsf{Word}_G$
             \quad /* extract last simple factor of normal form */
      \STATE Write each simple factor of $\mathsf{Word}_K$ as a
        $HG$-decomposition, i.e.\\ $\qquad\mathsf{Word}_K = h'_1 g'_1 | h'_2
        g'_2 | \cdots | h'_\ell g'_\ell$.
      \IF{$g'_\ell = \id$}
        \STATE \label{rewrite-1} $\mathsf{Word}_K \leftarrow g h'_1 | g'_1
          h'_2 | \cdots | g'_{\ell-1} h'_\ell$
      \ELSE
        \STATE \label{rewrite-2} $\mathsf{Word}_K \leftarrow g h'_1 | g'_1
          h'_2 | \cdots | g'_{\ell-1} h'_\ell | g'_\ell$
      \ENDIF
    \ENDWHILE
    \RETURN $\mathsf{Word}_K$
  \end{algorithmic}
\end{proposition}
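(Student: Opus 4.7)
The plan is to establish two loop invariants: (I1) $\mathsf{Word}_G\cdot\mathsf{Word}_K$, evaluated in $K$, equals $g_1\cdots g_m h_1\cdots h_n$; and (I2) $\mathsf{Word}_K\in\calLbar_K$. Invariant (I1) is immediate since the rewrite of $\mathsf{Word}_K$ in each iteration is merely a regrouping of $g\cdot\mathsf{Word}_K$, so $g$ is transferred from the end of $\mathsf{Word}_G$ to the front of $\mathsf{Word}_K$ without changing the product. The substance is all in (I2). Initially, $\mathsf{Word}_K = h_1|\cdots|h_n$ lies in $\calLbar_K$: it is the normal form of $h_1\cdots h_n$ in~$H$, and by \autoref{ZS-parabolic} together with the remark after \autoref{P:ParabolicSubmonoid}, the normal form in the parabolic submonoid~$H$ coincides with the normal form in~$K$.

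For the inductive step I will invoke the left-right symmetric form of \autoref{push-normal-form}, obtained by applying the involution $\sigma$ of \autoref{transforming-expressions}. Symmetrised, it reads: if $h'_1g'_1|\cdots|h'_\ell g'_\ell\in\calLbar_K$ and $g\in\Simples_G$ satisfies $\complement_G g\meet(h'_1\leftacts g'_1)=\id$, then $gh'_1|g'_1h'_2|\cdots|g'_{\ell-1}h'_\ell$ lies in $\calLbar_K$, with $g'_\ell$ appended if non-trivial. This is exactly the rewrite performed by the algorithm, so preservation of (I2) reduces to verifying this one hypothesis at each iteration.

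To verify the hypothesis I argue by induction on the iteration count. In the first iteration $g'_i=\id$ for all~$i$, hence $h'_1\leftacts g'_1=\id$ and the hypothesis is trivial. In any later iteration, the first simple factor of $\mathsf{Word}_K$ was created in the previous iteration as $\tilde g\,h''_1$, where $\tilde g$ is the element extracted from $\mathsf{Word}_G$ at that stage. Its $HG$-decomposition is $(\tilde g\Leftacts h''_1)(\tilde g\Rightacts h''_1)$, so \autoref{left-right-identity} gives
\[
  h'_1\leftacts g'_1 \;=\; (\tilde g\Leftacts h''_1)\leftacts(\tilde g\Rightacts h''_1) \;=\; \tilde g,
\]
and the hypothesis for the current iteration reduces to $\complement_G g\meet\tilde g=\id$, which is precisely the normal form condition $g\,|\,\tilde g$ that held in the input $\mathsf{Word}_G$ immediately before $\tilde g$ was extracted.

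The main obstacle, I expect, is recognising the algorithm as an iterated application of the $\sigma$-symmetric \autoref{push-normal-form} and spotting that \autoref{left-right-identity} collapses the push hypothesis at each step into a normal-form condition already guaranteed by the input $\mathsf{Word}_G$; this is why no explicit check is needed inside the loop. Termination and correctness then follow at once: $\mathsf{Word}_G$ shrinks by one simple factor per iteration, and when $\mathsf{Word}_G=\epsilon$, invariants (I1) and (I2) together say that the returned $\mathsf{Word}_K$ is the normal form of $g_1\cdots g_m h_1\cdots h_n$.
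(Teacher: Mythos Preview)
Your proof is correct and follows the same approach as the paper: both rely on (the $\sigma$-symmetric form of) \autoref{push-normal-form} to show that each rewrite of $\mathsf{Word}_K$ preserves membership in $\calLbar_K$. Your version is considerably more explicit than the paper's one-line argument, in particular in verifying the push hypothesis $\complement_G g \meet (h'_1 \leftacts g'_1) = \id$ via \autoref{left-right-identity} and the normal-form condition on $\mathsf{Word}_G$---a step the paper leaves to the reader.
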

\begin{proof}
By \autoref{push-normal-form}, in each iteration the word computed in line \ref{rewrite-1}, or \ref{rewrite-2}, is in normal form.
\end{proof}

\begin{remark}
 \autoref{AlgorithmDecomposition} and \autoref{AlgorithmNF} provide explicit and effective translations, in both directions, between normal forms in an internal \ZS{} product and the normal forms in its factors.
 
 As the existence of effectively computable normal forms is the main motivation for Garside theory, these explicit translations are some of the key results of this paper:  \autoref{AlgorithmDecomposition} and \autoref{AlgorithmNF} make it possible to reduce computational questions according to a decomposition of a Garside monoid as a product of simpler constituents.
 
 \autoref{AlgorithmDecomposition} and \autoref{AlgorithmNF} are essentially the constructive versions of \autoref{NF-Bijection-1} and \autoref{NF-Bijection-2}.
\end{remark}

\begin{theorem}\label{NF-Bijection-1}
  Suppose $(\Delta_K, \Delta_G, \Delta_H)$ is a \ZS{} Garside structure for the \ZS{} product $K=G\zs H$.
  \smallskip

  \noindent
  Then the map
  $\phi \from \calLbar_G \times \calLbar_H \to \calLbar_K$ given by
  \[
     \phi\big(g_1 | g_2 | \cdots | g_m \,,\, h_1 | h_2 | \cdots | h_n\big)
       = \NF\big(g_1 g_2 \cdots g_m h_1 h_2 \cdots h_n\big)
  \]
  is a bijection.
\end{theorem}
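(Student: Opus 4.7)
The plan is to produce an explicit two-sided inverse to $\phi$, using only the uniqueness of $GH$-decompositions in $K$ together with uniqueness of Garside normal forms in $G$, $H$, and $K$. Since $K$ is a Garside monoid and $K = G \zs H$, \autoref{ZS-parabolic} ensures that $G$ and $H$ are themselves Garside monoids, so the normal-form languages $\calLbar_G$ and $\calLbar_H$ appearing on the left-hand side are genuinely defined.

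I would define the candidate inverse $\psi \from \calLbar_K \to \calLbar_G \times \calLbar_H$ as follows. Given a word $w = k_1 | k_2 | \cdots | k_\ell \in \calLbar_K$, let $k := k_1 k_2 \cdots k_\ell \in K$ be the element it represents, and let $k = g h$ be its $GH$-decomposition, which exists uniquely by \autoref{D:ZappaSzepProduct}. Set $\psi(w) := (\NF_G(g), \NF_H(h))$. This is well-defined since $G$ and $H$ are Garside, hence $g$ and $h$ have unique normal forms in the respective factors.

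To check $\psi \circ \phi = \mathrm{id}$, start with $(u_G, u_H) = (g_1 | \cdots | g_m, \, h_1 | \cdots | h_n)$ representing $g \in G$ and $h \in H$; then $\phi(u_G, u_H)$ is by definition the normal form in $K$ of the element $gh$. The factorisation $gh$ is already a $GH$-decomposition, so by uniqueness the $GH$-decomposition of $gh$ is exactly $(g, h)$, and applying $\psi$ yields $(\NF_G(g), \NF_H(h)) = (u_G, u_H)$ by uniqueness of normal forms in the factors. Conversely, for $\phi \circ \psi = \mathrm{id}$, if $w$ represents $k \in K$ with $GH$-decomposition $k = gh$, then $\phi(\psi(w)) = \NF_K(gh) = \NF_K(k) = w$.

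There is no real obstacle at the level of establishing a bijection: once \autoref{ZS-parabolic} has put $G$ and $H$ on the same footing as $K$, the bijection amounts to packaging ``factor as $gh$, then take the normal form of each factor'' in one direction, and ``take the normal form of the product'' in the other. The substantive content — an explicit combinatorial translation between these normal forms that does not require multiplying out and renormalising in~$K$ — has already been supplied by \autoref{AlgorithmDecomposition} and \autoref{AlgorithmNF}, which together furnish constructive realisations of $\psi$ and $\phi$ respectively.
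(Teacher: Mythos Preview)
Your proof is correct and essentially identical to the paper's: the paper shows surjectivity by sending $k_1|\cdots|k_\ell$ to $(\NF(g),\NF(h))$ for the $GH$-decomposition $gh$ of $k_1\cdots k_\ell$ (your $\psi$), and injectivity via uniqueness of $GH$-decompositions and of normal forms (your verification of $\psi\circ\phi=\mathrm{id}$). The only cosmetic difference is that you package the argument as a two-sided inverse rather than separate injectivity/surjectivity checks.
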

\begin{proof}
Clearly $\NF\big(g_1 g_2 \cdots g_m h_1 h_2 \cdots h_n\big) \in \calLbar_K$, so the map $\phi$ is well-defined.

As each $k_1|\cdots|k_\ell\in\calLbar_K$ is the image of $\big(\NF(g)\,,\,\NF(h)\big)$, where $g h$ is the $GH$-decomposition of $k_1\cdots k_\ell \in K$, the map $\phi$ is surjective.

Now assume $g_1|\cdots|g_m$ and $g'_1|\cdots|g'_p$ in~$\calLbar_G$ and $h_1|\cdots|h_n$ and $h'_1|\cdots|h'_q$ in~$\calLbar_H$ satisfy $\phi\big(g_1 | \cdots | g_m \,,\, h_1 | \cdots | h_n\big)
= \phi\big(g'_1 | \cdots | g'_p \,,\, h'_1 | \cdots | h'_q\big)$.
Then one has $g_1 \cdots g_m  h_1 \cdots h_n = g'_1 \cdots g'_p  h'_1 \cdots h'_q$ and thus, by uniqueness of $GH$-de\-com\-po\-si\-tions, $g_1 \cdots g_m  = g'_1 \cdots g'_p$ and $h_1 \cdots h_n = h'_1 \cdots h'_q$.
Uniqueness of normal forms then yields $g_1|\cdots|g_m = g'_1|\cdots|g'_p$ and $h_1|\cdots|h_n = h'_1|\cdots|h'_q$, so the map~$\phi$ is injective.
\end{proof}

\begin{remark}
The map $\phi$ can be realised using the algorithm of \autoref{AlgorithmNF} and its inverse, $\phi^{-1}$, can be realised using the algorithm from \autoref{AlgorithmDecomposition}.
\end{remark}

\begin{corollary}\label{NF-Bijection-2}
  Suppose $(\Delta_K, \Delta_G, \Delta_H)$ is a \ZS{} Garside structure for the \ZS{} product $K=G\zs H$.
  \smallskip

  \noindent
  Then the map
  $\psi \from \calLbar_G \times \calLbar_H \to \calLbar_K$ given by
  \[
     \psi\big(g_1 | g_2 | \cdots | g_m \,,\, h_1 | h_2 | \cdots | h_n\big)
       = \NF\big( (g_1 g_2 \cdots g_m) \join (h_1 h_2 \cdots h_n) \big)
  \]
  is a bijection.
\end{corollary}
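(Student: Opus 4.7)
The plan is to recognise $\psi$ as a composition of four bijections that have already been established in the paper. Consider the product maps $\pi_G \from \calLbar_G \to G$ sending $g_1 | \cdots | g_m$ to $g_1 \cdots g_m$ and $\pi_H \from \calLbar_H \to H$ sending $h_1 | \cdots | h_n$ to $h_1 \cdots h_n$; both are bijections by existence and uniqueness of left normal forms in the Garside monoids~$G$ and~$H$. Consider also the join map $\lambda \from G \times H \to K$, $\lambda(g, h) = g \join h$, which is a bijection (indeed a poset isomorphism) by \autoref{ZS-lattice-isomorphism}. Finally, the left normal form map $\NF \from K \to \calLbar_K$ is a bijection for the analogous reason in~$K$. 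Unwinding the definitions one obtains $\psi = \NF \circ \lambda \circ (\pi_G \times \pi_H)$, so $\psi$ is a composition of bijections and hence itself a bijection.

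The only hypothesis that needs to be checked is that \autoref{ZS-lattice-isomorphism} is applicable to the situation at hand. The existence of a \ZS{} Garside structure ensures that~$K$ is a Garside monoid, and then \autoref{ZS-parabolic} and \autoref{action-bijections} supply that~$G$ and~$H$ are Garside monoids --- hence cancellative and conical --- and that they act on each other by bijections. Thus all premises of \autoref{ZS-lattice-isomorphism} are met.

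No serious obstacle is anticipated: the entire content of the corollary is the observation that $\NF$, $\lambda$, $\pi_G$ and $\pi_H$ can be stitched together into $\psi$, and every ingredient has already been prepared. An equivalent alternative strategy would be to reduce directly to \autoref{NF-Bijection-1}: since $g \join h = g \cdot (g^{-1} \Leftacts h)$ for $g \in G$ and $h \in H$ by \autoref{ZS-lcm}, one can write $\psi = \phi \circ \Phi$, where $\Phi$ is the self-bijection of $\calLbar_G \times \calLbar_H$ that fixes the first coordinate $w_G$ and replaces the second coordinate $w_H$ by $\NF\bigl(\pi_G(w_G)^{-1} \Leftacts \pi_H(w_H)\bigr)$; the inverse of $\Phi$ is obtained by applying the inverse action in the second coordinate. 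The direct composition approach above is however more transparent.
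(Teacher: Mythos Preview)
Your primary argument is correct and, in fact, slightly more direct than the paper's. The paper proves the corollary by reducing to \autoref{NF-Bijection-1}: it uses \autoref{action-preserves-NF} to check that $(g_1\cdots g_m)^{-1}\Leftacts(h_1\ldot\cdots\ldot h_n)$ is again a word in normal form, then invokes \autoref{ZS-lcm} to write $\psi$ as $\phi$ precomposed with this self-map of $\calLbar_G\times\calLbar_H$. This is exactly the alternative strategy you sketch at the end. Your main route instead bypasses $\phi$ entirely and factors $\psi$ through the elements of $G$, $H$ and $K$ via $\NF\circ\lambda\circ(\pi_G\times\pi_H)$; the only nontrivial ingredient is then \autoref{ZS-lattice-isomorphism}, whose applicability you justify correctly.

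What each approach buys: your decomposition is shorter and appeals only to the lattice isomorphism, whereas the paper's reduction to $\phi$ keeps the corollary tied to the word-level machinery (\autoref{action-preserves-NF}, \autoref{AlgorithmNF}) that is the focus of \autoref{S:ZappaSzep-NF}. In particular, the paper's proof exhibits $\psi$ as algorithmically computable by first applying the inverse action on the $H$-word and then running \autoref{AlgorithmNF}; your proof establishes bijectivity cleanly but does not make this computational content explicit.
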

\begin{proof}
  By \autoref{action-preserves-NF}, $h'_1 \ldot h'_2 \ldot \cdots \ldot h'_n =
  (g_1 g_2 \cdots g_m)^{-1} \Leftacts (h_1 \ldot h_2 \ldot \cdots
  \ldot h_n)$ is a word in normal form. So, by \autoref{ZS-lcm}, $\psi(g_1
  \ldot g_2 \ldot \cdots \ldot g_m, h_1 \ldot h_2 \ldot \cdots \ldot h_n) = \phi(g_1 \ldot g_2
  \ldot \cdots \ldot g_m, (g_1 g_2 \cdots g_m)^{-1} \Leftacts (h_1 \ldot h_2
  \ldot \cdots \ldot h_n))$.  Therefore, as $\psi$ is a composition of
  bijections, it is a bijection.
\end{proof}

\bibliographystyle{alpha-sjt}
\bibliography{bibliography}

\bigskip
\noindent
\begin{minipage}[t]{0.52\textwidth}
\noindent\textbf{Volker Gebhardt}\\
\noindent \texttt{v.gebhardt@westernsydney.edu.au}
\end{minipage}
\hfill
\begin{minipage}[t]{0.47\textwidth}
\noindent\textbf{Stephen Tawn}\\
\noindent \texttt{stephen@tawn.co.uk}\\
\noindent \url{http://www.stephentawn.info}
\end{minipage}
\medskip
\begin{center}
Western Sydney University \\
Centre for Research in Mathematics\\
Locked Bag 1797, Penrith NSW 2751, Australia\\
\noindent URL: \url{http://www.westernsydney.edu.au/crm}
\end{center}

\end{document}